\newtheorem{lemma}{Lemma}[section]
\newtheorem{remark}[lemma]{Remark}
\newtheorem{example}[lemma]{Example}
\newtheorem{theorem}[lemma]{Theorem}
\newtheorem{corollary}[lemma]{Corollary}
\newtheorem{definition}[lemma]{Definition}
\newtheorem{proposition}[lemma]{Proposition}
\author{Daniel Robert-Nicoud}
\date{}
\title{A model structure for the Goldman--Millson theorem}
\address{Laboratoire Analyse, G\'eom\'etrie et Applications, Universit\'e Paris 13, Sorbonne Paris Cit\'e, 99 Avenue Jean Baptiste Cl\'ement, 93430 Villetaneuse, France}
\email{robert-nicoud@math.univ-paris13.fr}
\newcommand{\ad}{\ensuremath{\mathrm{ad}}}
\newcommand{\antishriek}{\mbox{\footnotesize{\rotatebox[origin=c]{180}{$!$}}}}
\renewcommand{\bar}{\ensuremath{\mathrm{B}}}
\newcommand{\C}{\ensuremath{\mathscr{C}}}
\newcommand{\cat}{\ensuremath{\mathcal{C}}}
\newcommand{\cdgl}{\ensuremath{\mathsf{cdgLie}}}
\newcommand{\colie}{\ensuremath{\mathsf{coLie}}}
\newcommand{\colim}[1]{\ensuremath{\underset{#1}{\mathrm{colim}}}}
\newcommand{\com}{\ensuremath{\mathsf{Com}}}
\newcommand{\dgl}{\ensuremath{\mathsf{dgLie}}}
\newcommand{\fdvect}{\ensuremath{\mathsf{fdVect}}}
\newcommand{\fsets}{\ensuremath{\mathsf{fSets}}}
\newcommand{\fqi}{\mathrm{Fqi}}
\newcommand{\g}{\ensuremath{\mathfrak{g}}}
\newcommand{\h}{\ensuremath{\mathfrak{h}}}
\newcommand{\ho}[1]{\ensuremath{\mathrm{Ho}\left(#1\right)}}
\newcommand{\id}{\ensuremath{\mathrm{id}}}
\newcommand{\ind}[1]{\ensuremath{\mathsf{ind}(#1)}}
\newcommand{\lie}{\ensuremath{\mathsf{Lie}}}
\newcommand{\mc}{\ensuremath{\mathrm{mc}}}
\newcommand{\MC}{\ensuremath{\mathrm{MC}}}
\newcommand{\MCbar}{\ensuremath{\overline{\mathrm{MC}}}}
\renewcommand{\k}{\ensuremath{\mathbb{K}}}
\renewcommand{\L}{\ensuremath{\mathscr{L}}}
\renewcommand{\P}{\ensuremath{\mathscr{P}}}
\newcommand{\pro}[1]{\ensuremath{\mathsf{pro}(#1)}}
\renewcommand{\S}{\ensuremath{\mathbb{S}}}
\newcommand{\sets}{\ensuremath{\mathsf{Sets}}}
\newcommand{\ssets}{\ensuremath{\mathsf{sSet}}}
\newcommand{\susp}{\ensuremath{\mathscr{S}}}
\newcommand{\vect}{\ensuremath{\mathsf{Vect}}}
\newcommand{\adjunction}{\@ifstar\named@adjunction\normal@adjunction}
\newcommand{\normal@adjunction}[4]{%
  #1\ \colon #2%
  \mathrel{\vcenter{%
    \offinterlineskip\m@th
    \ialign{%
      \hfil$##$\hfil\cr
      \longrightharpoonup\cr
      \noalign{\kern-.3ex}
      \smallbot\cr
      \longleftharpoondown\cr
    }%
  }}%
  #3 \noloc\ #4%
}
\newcommand{\named@adjunction}[4]{%
  #2%
  \mathrel{\vcenter{%
    \offinterlineskip\m@th
    \ialign{%
      \hfil$##$\hfil\cr
      \scriptstyle#1\cr
      \noalign{\kern.1ex}
      \longrightharpoonup\cr
      \noalign{\kern-.3ex}
      \smallbot\cr
      \longleftharpoondown\cr
      \scriptstyle#4\cr
    }%
  }}%
  #3%
}
\newcommand{\longrightharpoonup}{\relbar\joinrel\rightharpoonup}
\newcommand{\longleftharpoondown}{\leftharpoondown\joinrel\relbar}
\newcommand\noloc{%
  \nobreak
  \mspace{6mu plus 1mu}
  {:}
  \nonscript\mkern-\thinmuskip
  \mathpunct{}
  \mspace{2mu}
}
\newcommand{\smallbot}{%
  \begingroup\setlength\unitlength{.15em}%
  \begin{picture}(1,1)
  \roundcap
  \polyline(0,0)(1,0)
  \polyline(0.5,0)(0.5,1)
  \end{picture}%
  \endgroup
}
\subjclass[2010]{Primary 18G55; Secondary 17B55}
\keywords{Model categories, Lie (co)algebras, Goldman--Millson and Dolgushev--Rogers theorems}
\thanks{The author was supported by grants from R\'egion Ile-de-France, and the grant ANR-14-CE25-0008-01 project SAT}
\begin{document}

\begin{abstract}
	By a result of Vallette \cite{vallette14}, we put a sensible model structure on the category of conilpotent Lie coalgebras. This gives us a powerful tool to study the subcategory of Lie algebras obtained by linear dualization, also known as the category of pronilpotent Lie algebras. This way, we recover weaker versions of the celebrated Goldman--Millson theorem and Dolgushev--Rogers theorem by purely homotopical methods. We explore the relations of this procedure with the existent literature, namely the works of Lazarev--Markl and Buijs--F\'elix--Murillo--Tanr\'e.
\end{abstract}

\maketitle

\setcounter{tocdepth}{1}

\tableofcontents

\section{Introduction}

The celebrated Goldman--Millson theorem \cite{goldman88} in one of its more recent versions states that every filtered quasi-isomorphism of differential graded Lie algebras --- i.e. quasi-isomorphisms that are compatible with certain filtrations of the algebras --- induces an isomorphism between the moduli spaces of Maurer--Cartan elements of the algebras. Recently, this theorem was extended by V. A. Dolgushev and C. L. Rogers \cite{dolgushev15}, who proved that such a morphism induces a weak equivalence of simplicial sets between the whole deformation $\infty$-groupoids of the algebras, of which the moduli space of Maurer--Cartan elements is the zeroth homotopy group. These theorems have a distinct homotopical flavor --- after all, they say that the class of filtered quasi-isomorphisms is sent into the classe of weak equivalences of simplicial sets under a certain functor. However, the proofs of these theorems are not done by purely homotopical methods, but rather by working explicitly with the algebras and performing some induction using the filtrations. There is a reason behind this: the filtered quasi-isomorphisms are not very well behaved and do not form the class of weak equivalences of any model structure on the category of differential graded Lie algebras that we know of, even after closing them by the 2-out-of-3 property.

\medskip

The primary goal of this paper is to provide a fully homotopical and self-contained approach to the proof of the Goldman--Millson theorem and to the Dolgushev--Rogers theorem. We begin by considering the model structure on Lie coalgebras given by B. Vallette in \cite{vallette14}, of which we completely characterize the weak equivalences. Thanks to some results of \cite{legrignou16}, we show that the category of conilpotent Lie coalgebras is equivalent to the category of pro-objects in finite dimensional, nilpotent Lie algebras, and thus we obtain a model structure on this category too. An interplay between this model structure and the limit functor then allows us to show that we can see the gauge relation of Maurer--Cartan elements as a homotopy relation between certain morphisms representing Maurer--Cartan elements. A model categorical argument then immediately gives us a slightly weaker version of the Goldman--Millson theorem. Further, using simplicial framings, we extend the argument to prove a version of the Dolgushev--Rogers theorem.

\medskip

This work has close links with recent works by other authors, which will be explored throughout the text. Starting from the general theory of \cite{vallette14}, which gives a model structure on the category of conilpotent coalgebras over a cooperad, we recover the model structure on the category of pro-objects in finite-dimensional, nilpotent Lie algebras described by A. Lazarev and M. Markl in \cite{lazarev13}. The idea of a homotopical approach to the proof of the Goldman--Millson and the Dolgushev--Rogers theorems is already present in the work of U. Buijs, Y. F\'elix, A. Murillo, and D. Tanr\'e --- more specifically in \cite{buijs16}, where they obtain a result which is strictly stronger than our \cref{thm:homotopical GM,thm:homotopical DR}. However, they need to use a version of the Dolgushev--Rogers theorem in their proof, while our approach is more self-contained, if not as powerful. Moreover, the techniques we present in \cref{sect:DRThm} can be applied to the results of Buijs--F\'elix--Murillo--Tanr\'e to give a modest generalization.

\medskip

An objective of this paper is also to understand the links between all these results. In order to do this, we find ourselves confronted with various notions coming from different fields. In order to make the text accessible to a wide variety of mathematicians, we have interspersed it with various reminders on certain technical notions.

\subsection*{Structure of the paper}

In \cref{sect:ModelCat}, we recall some notions of category theory and model categories that we will need throughout the text. In \cref{sect:GMandDRThms},  we give a rapid overview of some very important results about the Maurer--Cartan elements in Lie algebras: the Goldman--Millson theorem and the Dolgushev--Rogers theorem. In \cref{sect:ValletteModelStr}, we review the Vallette model structure on the category of conilpotent coalgebras over a cooperad. This section is mostly operadic, and we assume a basic familiarity with the language and basic results of operad theory throughout it. However, it can be skipped and the reader uninterested in this part of the theory can find the case of Lie coalgebras --- which we need in the rest of the article --- summarized in \cref{subsect:coLieCog}. Going on, in \cref{sect:GMThm}, we use the theory developed in the previous parts of the paper to derive a partial version of the Goldman--Millson theorem using purely homotopy theoretical methods. Then, in \cref{sect:DRThm}, we do the same with the Dolgushev--Rogers theorem, which can be seen as a higher analogue of the Goldman--Millson theorem.

\subsection*{Acknowledgements}

I am grateful to Ricardo Campos for the time he dedicated discussing with me during the final phases of the writing of this article, as well --- of course --- to Bruno Vallette for the uncountable answers, comments and always constructive critiques.

\subsection*{Notation and conventions}

Throughout the paper, we work over a field $\k$ of characteristic $0$.

\medskip

Unless stated otherwise, we always work over cochain complexes over $\k$: all operads and all algebras are differential graded, and their differentials have degree $1$. In particular, Maurer--Cartan elements of dg Lie algebras have degree $1$.

\medskip

For operads, cooperads and related concepts we follow as closely as possible the notations of the book \cite{vallette12}.

\section{Some notions of category theory} \label{sect:ModelCat}

In this section, we give a reminder of the basic categorical notions we will need later, such as equivalences of categories, ind- and pro-objects in a category (i.e. formal colimits and limits), model categories and the homotopy category of a model category.

\subsection{Equivalences of categories}

Given two categories $\cat$ and $\mathcal{D}$, an \emph{isomorphism of categories} between them is a functor $F:\cat\to\mathcal{D}$ such that there exists another functor $F^{-1}:\mathcal{D}\to\cat$ satisfying $F^{-1}F = 1_\cat$ and $FF^{-1} = 1_\mathcal{D}$. This notion is far too strict to be really useful. A more sensible notion to compare categories is the following one.

\begin{definition}
	An \emph{equivalence of categories} between $\cat$ and $\mathcal{D}$ is a functor $F:\cat\to\mathcal{D}$ such that there exists a functor $G:\mathcal{D}\to\cat$ and two natural isomorphisms $GF\cong1_\cat$ and $FG\cong1_\mathcal{D}$. An \emph{anti-equivalence of categories} between $\cat$ and $\mathcal{D}$ is an equivalence of categories between $\cat^{op}$ and $\mathcal{D}$.
\end{definition}

There is another notion of equivalence of categories which is at first sight stronger than the previous one.

\begin{definition}
	An \emph{adjoint equivalence of categories} between $\cat$ and $\mathcal{D}$ is an adjoint pair
	\[
	\adjunction{F}{\cat}{\mathcal{D}}{G}
	\]
	such that the unit and counit maps are natural isomorphisms.
\end{definition}

In fact, the two notions of equivalence of categories are the same.

\begin{theorem} \label{thm:equivOfCats}
	Let $F:\cat\to\mathcal{D}$ be a functor. The following statements are equivalent.
	\begin{enumerate}
		\item The functor $F$ is an equivalence of categories.
		\item The functor $F$ is part of an adjoint equivalence of categories.
		\item The functor $F$ is fully faithful and essentially surjective, i.e. for every object $d\in\mathcal{D}$ there exists $c\in\cat$ such that $d\cong F(c)$.
	\end{enumerate}
\end{theorem}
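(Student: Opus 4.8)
The plan is to establish the cycle of implications $(2)\Rightarrow(1)\Rightarrow(3)\Rightarrow(2)$. The first of these is essentially a matter of unwinding definitions: if $F$ is part of an adjoint equivalence with right adjoint $G$, then by hypothesis the unit $\eta\colon 1_\cat\Rightarrow GF$ and the counit $\varepsilon\colon FG\Rightarrow 1_\mathcal{D}$ are natural isomorphisms, and these are exactly the data required to exhibit $F$ as an equivalence of categories.

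For $(1)\Rightarrow(3)$, suppose we are given $G\colon\mathcal{D}\to\cat$ together with natural isomorphisms $\eta\colon 1_\cat\cong GF$ and $\varepsilon\colon FG\cong 1_\mathcal{D}$. Essential surjectivity is immediate, since for any $d\in\mathcal{D}$ the component $\varepsilon_d\colon F(G(d))\to d$ is an isomorphism, so $d\cong F(G(d))$. For full faithfulness, the key observation is that a functor naturally isomorphic to the identity is fully faithful; hence both $GF$ and $FG$ are faithful, and since a composite $GF$ can be faithful only if its first factor $F$ is faithful, we conclude that $F$, and symmetrically $G$, are faithful. To see that $F$ is full, given $g\colon F(c)\to F(c')$ we set $f:=\eta_{c'}^{-1}\circ G(g)\circ\eta_c$; naturality of $\eta$ gives $G(F(f))=\eta_{c'}\circ f\circ\eta_c^{-1}=G(g)$, and faithfulness of $G$ then yields $F(f)=g$.

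The substance of the proof lies in $(3)\Rightarrow(2)$. Assuming $F$ fully faithful and essentially surjective, I would invoke the axiom of choice to pick, for each $d\in\mathcal{D}$, an object $G(d)\in\cat$ and an isomorphism $\varepsilon_d\colon F(G(d))\to d$. One extends $G$ to morphisms using full faithfulness: for $h\colon d\to d'$ the morphism $\varepsilon_{d'}^{-1}\circ h\circ\varepsilon_d\colon F(G(d))\to F(G(d'))$ is of the form $F(-)$ for a unique $G(h)\colon G(d)\to G(d')$. Faithfulness of $F$ makes $G$ functorial, and the very definition of $G$ on morphisms turns the family $(\varepsilon_d)_d$ into a natural isomorphism $\varepsilon\colon FG\cong 1_\mathcal{D}$. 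The unit is constructed dually: for each $c\in\cat$ the isomorphism $\varepsilon_{F(c)}^{-1}\colon F(c)\to FGF(c)$ is $F$ of a unique morphism $\eta_c\colon c\to GF(c)$, which is itself an isomorphism because a fully faithful functor reflects isomorphisms. It then remains to verify naturality of $\eta$ and the two triangle identities, which follow formally by applying the faithful functor $F$ and using the defining relation $F(\eta_c)=\varepsilon_{F(c)}^{-1}$ together with the naturality of $\varepsilon$.

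I expect the main obstacle to be precisely this last implication $(3)\Rightarrow(2)$: one must make the choices of $G(d)$ and $\varepsilon_d$ and then check the triangle identities, all of which is carried out by translating the required equations across the faithful functor $F$. The remaining implications are purely formal.
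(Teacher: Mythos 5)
Your proof is correct and complete in outline: the cycle $(2)\Rightarrow(1)\Rightarrow(3)\Rightarrow(2)$ is the standard argument, and each step you sketch (including the use of choice to define $G$ on objects, the definition of $G$ on morphisms via full faithfulness, and the verification of the triangle identities by transporting equations along the faithful functor $F$) is sound. The paper itself gives no proof of this classical statement, deferring instead to Mac Lane \cite[pp.~92--95]{maclane70}; your argument is essentially the one found there, so there is nothing to reconcile.
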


For details on these notions, see for example the book \cite[pp. 92--95]{maclane70}.

\subsection{Categories of ind-objects and categories of pro-objects}

Let $\cat$ be a category. One can then consider the category of ``formal colimits'' in $\cat$ (e.g. \cite[Sect. 8]{sga4.1}).

\begin{definition}
	A (non-empty) small category $\cat$ is \emph{filtered} if
	\begin{enumerate}
		\item for every two objects $x,y\in\cat$, there exists an object $z\in\cat$ and two arrows $z\to x$ and $z\to y$, and
		\item for every two arrows $f,g:b\to a$ in $\cat$, there exists an arrow $h:c\to b$ such that $fh=gh$.
	\end{enumerate}
	The category $\cat$ is \emph{cofiltered} if it satisfies the dual properties.
\end{definition}

\begin{definition}
	The category $\ind{\cat}$ of ind-objects in $\cat$ is the category that has as objects all the diagrams $F:\mathcal{D}\to\cat$ with $\mathcal{D}$ a small filtered category. If $F:\mathcal{D}\to\cat$ and $G:\mathcal{E}\to\cat$ are two objects in $\ind{\cat}$, the set of morphisms between them is
	$$\hom_{\ind{\cat}}(F,G)\coloneqq\lim_{d\in\mathcal{D}}\colim{e\in\mathcal{E}}\hom_\cat(F(d),G(e))\ ,$$
	where the limit and the colimit are taken in $\sets$.
\end{definition}

Dually, one can also consider the category of ``formal limits'' in $\cat$ (e.g. \cite[Sect. A.2]{grothendieck60} and \cite[Sect. 8]{sga4.1}).

\begin{definition}
	The category $\pro{\cat}$ of pro-objects in $\cat$ is the category that has as objects all the diagrams $F:\mathcal{D}\to\cat$ with $\mathcal{D}$ a small cofiltered category. If $F:\mathcal{D}\to\cat$ and ,$G:\mathcal{E}\to\cat$ are two objects in $\pro{\cat}$, the set of morphisms between them is
	$$\hom_{\pro{\cat}}(F,G)\coloneqq\lim_{e\in\mathcal{E}}\colim{d\in\mathcal{D}}\hom_\cat(F(d),G(e))\ ,$$
	where the limit and the colimit are taken in $\sets$.
\end{definition}

\begin{lemma} \label{lemma:antiEquivIndPro}
	Let $\cat'$ and $\cat''$ be two equivalent or anti-equivalent categories. Then $\ind{\cat'}$ is equivalent to $\ind{\cat''}$, respectively anti-equivalent to $\pro{\cat''}$.
\end{lemma}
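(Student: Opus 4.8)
The plan is to reduce both assertions to two general facts about the constructions $\ind{-}$ and $\pro{-}$: first, that each is a $2$-functor, so in particular sends equivalences of categories to equivalences; and second, a duality identity relating $\ind{-}$ and $\pro{-}$ through the opposite-category construction. The "equivalent" half follows from the first fact alone, and the "anti-equivalent" half from combining the two.

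First I would record the functoriality. Any functor $H\colon\cat'\to\cat''$ induces $\ind{H}\colon\ind{\cat'}\to\ind{\cat''}$ by postcomposition, sending a filtered diagram $F\colon\mathcal{D}\to\cat'$ to $H\circ F$; on morphisms one simply applies $H$ inside the defining formula $\lim_{d\in\mathcal{D}}\colim{e\in\mathcal{E}}\hom_{\cat'}(F(d),G(e))$, using that limits and colimits in $\sets$ are functorial. This assignment is strictly compatible with composition and identities, and a natural transformation $\alpha\colon H\Rightarrow H'$ induces a levelwise $\ind{\alpha}\colon\ind{H}\Rightarrow\ind{H'}$ which is an isomorphism as soon as $\alpha$ is. Consequently, if $(H,K,\eta,\varepsilon)$ exhibits an equivalence $\cat'\simeq\cat''$, then $(\ind{H},\ind{K},\ind{\eta},\ind{\varepsilon})$ exhibits $\ind{\cat'}\simeq\ind{\cat''}$, and the same argument applies verbatim to $\pro{-}$. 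This already settles the equivalent half of the statement.

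Next I would establish the duality $\ind{(\cat^{op})}\cong(\pro{\cat})^{op}$, natural in $\cat$. On objects this is the tautology that a filtered diagram $F\colon\mathcal{D}\to\cat^{op}$ is the same datum as a cofiltered diagram $\widetilde{F}\colon\mathcal{D}^{op}\to\cat$, i.e. an object of $\pro{\cat}$. On morphisms one unwinds the definitions: using $\hom_{\cat^{op}}(F(d),G(e))=\hom_{\cat}(\widetilde{G}(e),\widetilde{F}(d))$ one rewrites $\hom_{\ind{(\cat^{op})}}(F,G)$ as $\lim_{\mathcal{D}^{op}}\colim{\mathcal{E}}\hom_{\cat}(\widetilde{G}(e),\widetilde{F}(d))$, which is exactly $\hom_{\pro{\cat}}(\widetilde{G},\widetilde{F})=\hom_{(\pro{\cat})^{op}}(\widetilde{F},\widetilde{G})$. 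The point I expect to be the main obstacle is the bookkeeping of variances: one must verify that, after inserting the opposite indexing categories, the outer limit over $\mathcal{D}$ and inner colimit over $\mathcal{E}$ of the $\ind$-formula genuinely coincide with the iterated limit and colimit produced by the $\pro$-formula after the source–target reversal imposed by the outer $(-)^{op}$. Once the variances are matched, the identification of the hom-sets is literally an equality, so the duality is in fact an isomorphism of categories.

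Finally I would assemble the anti-equivalent half. By definition, $\cat'$ and $\cat''$ being anti-equivalent means $(\cat')^{op}\simeq\cat''$, equivalently $\cat'\simeq(\cat'')^{op}$. Applying the functoriality of $\ind{-}$ to this last equivalence gives $\ind{\cat'}\simeq\ind{(\cat'')^{op}}$, whence $(\ind{\cat'})^{op}\simeq(\ind{(\cat'')^{op}})^{op}$; and the duality applied to $\cat=\cat''$ gives $\ind{(\cat'')^{op}}\cong(\pro{\cat''})^{op}$, so that $(\ind{(\cat'')^{op}})^{op}\cong\pro{\cat''}$. Chaining these yields $(\ind{\cat'})^{op}\simeq\pro{\cat''}$, i.e. $\ind{\cat'}$ is anti-equivalent to $\pro{\cat''}$, as required.
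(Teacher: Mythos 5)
Your argument is correct, and since the paper dismisses this lemma with ``This is straightforward,'' you have simply supplied the standard details the author had in mind: functoriality of $\ind{-}$ under equivalences plus the duality $\ind{\cat^{op}}\cong(\pro{\cat})^{op}$. The variance bookkeeping you flag as the main obstacle does check out --- the inner colimit in the $\ind$-formula over the filtered $\mathcal{E}$ matches the inner colimit in the $\pro$-formula over $(\mathcal{E}^{op})^{op}$, and likewise for the outer limits --- so the identification of hom-sets is an equality as you claim.
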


\begin{proof}
	This is straightforward.
\end{proof}

We recall the definition of a compact object in a category.

\begin{definition}
	Let $\cat$ be a category that admits filtered colimits. An object $c\in\cat$ is said to be \emph{compact} if the functor
	$$\hom_\cat(c,-):\cat\longrightarrow\sets$$
	preserves filtered colimits.
\end{definition}

\begin{proposition} \label{prop:CEquivToIndC}
	Let $\cat$ be a category and let $\cat'$ be a full subcategory of $\cat$. Further, assume that
	\begin{enumerate}
		\item \label{it:CCocomplete} the category $\cat$ is cocomplete,
		\item \label{it:delta} there exists a functor
		$$\delta:\cat\longrightarrow\ind{\cat'}$$
		such that the composite $\colim{}\,\delta$ is naturally isomorphic to the identity functor of $\cat$, where $\colim{}$ is (a choice for) the functor
		$$\colim{}:\ind{\cat'}\longrightarrow\cat$$
		given by taking the colimit in $\cat$, and
		\item \label{it:compactObj} every object in $\cat'$ is compact in $\cat$.
	\end{enumerate}
	Then the functors $\delta$ and $\colim{}$ exhibit an equivalence of categories between $\cat$ and $\ind{\cat'}$.
\end{proposition}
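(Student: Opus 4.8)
The plan is to show that $\delta$ and $\colim{}$ are mutually quasi-inverse by invoking \cref{thm:equivOfCats}: it suffices to check that $\colim{}$ is fully faithful and essentially surjective, since hypothesis \eqref{it:delta} already hands us one of the two natural isomorphisms, namely $\colim{}\,\delta\cong1_\cat$. By the characterization of equivalences, once I know $\colim{}$ is fully faithful and essentially surjective, it is automatically part of an adjoint equivalence with quasi-inverse $\delta$, and the two triangle isomorphisms fall out.

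\emph{Essential surjectivity} of $\colim{}$ is immediate from \eqref{it:delta}: any $c\in\cat$ satisfies $c\cong\colim{}\,\delta(c)$, so $c$ lies in the essential image. Note this also shows that $\delta$ is essentially surjective onto a skeleton, but more usefully it tells us that every object of $\cat$ is, up to isomorphism, a filtered colimit of objects of $\cat'$ in the specific form prescribed by $\delta$.

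\emph{Full faithfulness} of $\colim{}$ is the heart of the matter, and I expect this to be the main obstacle. Fix two ind-objects $F\colon\mathcal{D}\to\cat'$ and $G\colon\mathcal{E}\to\cat'$. By definition of the morphisms in $\ind{\cat'}$, I must compare
\[
\hom_{\ind{\cat'}}(F,G)=\lim_{d\in\mathcal{D}}\colim{e\in\mathcal{E}}\hom_{\cat'}(F(d),G(e))
\]
with $\hom_\cat\bigl(\colim{d}F(d),\colim{e}G(e)\bigr)$. Since $\cat'$ is a full subcategory of $\cat$, I may replace $\hom_{\cat'}(F(d),G(e))$ by $\hom_\cat(F(d),G(e))$ throughout. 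The colimit defining the source is taken over the filtered category $\mathcal{D}$, so a map out of $\colim{d}F(d)$ is the same as a compatible family of maps out of each $F(d)$; this is just the universal property of the colimit and identifies $\hom_\cat(\colim{d}F(d),X)$ with $\lim_{d}\hom_\cat(F(d),X)$ for any $X$. The genuinely delicate step is to commute the Hom-functor past the colimit in the \emph{target} variable: I need
\[
\hom_\cat\Bigl(F(d),\colim{e}G(e)\Bigr)\;\cong\;\colim{e}\hom_\cat(F(d),G(e)),
\]
and this is exactly where hypotheses \eqref{it:CCocomplete} and \eqref{it:compactObj} enter. Compactness of $F(d)\in\cat'$ (as an object of the cocomplete category $\cat$) says precisely that $\hom_\cat(F(d),-)$ preserves the filtered colimit $\colim{e}G(e)$, yielding the displayed isomorphism.

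Putting these together, I get
\[
\hom_\cat\Bigl(\colim{d}F(d),\colim{e}G(e)\Bigr)\cong\lim_{d}\hom_\cat\Bigl(F(d),\colim{e}G(e)\Bigr)\cong\lim_{d}\colim{e}\hom_\cat(F(d),G(e)),
\]
which is exactly $\hom_{\ind{\cat'}}(F,G)$. One then checks that this chain of isomorphisms is induced by the functor $\colim{}$ itself — i.e.\ that it sends a morphism of ind-objects to the map on colimits it represents — which is a routine naturality verification using the universal properties involved. Having established full faithfulness and essential surjectivity of $\colim{}$, an application of \cref{thm:equivOfCats} completes the proof.
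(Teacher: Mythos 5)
Your proof is correct and follows essentially the same route as the paper: the heart of both arguments is the identical chain of isomorphisms $\hom_{\ind{\cat'}}(F,G)\cong\lim_d\colim{e}\hom_\cat(F(d),G(e))\cong\lim_d\hom_\cat(F(d),\colim{}\,G)\cong\hom_\cat(\colim{}\,F,\colim{}\,G)$, using fullness of $\cat'$, the universal property of the colimit in the source variable, and compactness in the target variable. The only (cosmetic) difference is in the wrap-up: the paper uses this identification directly to produce the natural isomorphism $\delta\,\colim{}\cong\id_{\ind{\cat'}}$ from the identity of $\colim{}\,F$, whereas you invoke \cref{thm:equivOfCats} via full faithfulness plus essential surjectivity.
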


\begin{proof}
	Assumption (\ref{it:CCocomplete}) guarantees the existence of a colimit functor. The natural isomorphism
	$$\colim{}\,\delta\cong\id_\cat$$
	is given by assumption (\ref{it:delta}). We are left to prove that there exists a natural isomorphism
	$$\delta\,\colim{}\cong\id_{\ind{\cat'}}\ .$$
	Let $F:\mathcal{D}\to\cat'$ and $G:\mathcal{E}\to\cat'$ be two objects in $\ind{\cat'}$. Then we have
	\begin{align*}
		\hom_{\ind{\cat'}}(F,G) =&\ \lim_{d\in\mathcal{D}}\colim{e\in\mathcal{E}}\hom_{\cat'}(F(d),G(e))\\
		\cong&\ \lim_{d\in\mathcal{D}}\colim{e\in\mathcal{E}}\hom_\cat(F(d),G(e))\\
		\cong&\ \lim_{d\in\mathcal{D}}\hom_\cat(F(d),\colim{}\,G)\\
		\cong&\ \hom_\cat(\colim{}\,F,\colim{}\,G)\ .
	\end{align*}
	In the second line we used the fact that $\cat'$ is a full subcategory of $\cat$, and in the third one the fact that $F(d)\in\cat'$ is always a compact object by assumption (\ref{it:compactObj}). It follows that for any $F\in\ind{\cat'}$ we have
	\begin{align*}
		\hom_{\ind{\cat'}}(\delta\,\colim{}\,F,F) \cong&\ \hom_\cat(\colim{}\,\delta\,\colim{}\,F,\colim{}\,F)\\
		\cong&\ \hom_\cat(\colim{}\,F,\colim{}\,F)\ ,
	\end{align*}
	where we used the fact that $\colim{}\,\delta\cong\id_\cat$. Then, the identity morphism of $\colim{}\ F$ provides a natural isomorphism $\delta\,\colim{}\cong\id_{\ind{\cat'}}$ as we wanted.
\end{proof}

\begin{example}
	The easiest example of application of \cref{prop:CEquivToIndC} is the following one. Let $\cat\coloneqq\sets$ be the category of (small) sets and let $\cat'\coloneqq\fsets$ be the full subcategory of finite sets. Every set is the colimit of its finite subsets, so taking the diagram of all finite subsets gives a functor
	$$\delta:\sets\longrightarrow\ind{\fsets}\ .$$
	As $\sets$ is cocomplete and finite sets are compact objects in $\sets$, \cref{prop:CEquivToIndC} tells us that $\sets$ is equivalent to $\ind{\fsets}$.
\end{example}

\begin{example}
	Another classical example is as follows. Let $\cat \coloneqq \vect$ be the category of vector spaces and let $\cat' = \cat'' \coloneqq \fdvect$ be the full subcategory of finite dimensional vector spaces. As every vector space is naturally isomorphic to the colimit of all its finite dimensional subspaces, taking the diagram of all the finite dimensional subspaces of a vector space gives a functor
	$$\delta:\vect\longrightarrow\ind{\fdvect}\ .$$
	It is well-known that $\vect$ is cocomplete and that the finite dimensional vector spaces are compact objects in $\vect$. Moreover, linear duality gives an anti-equivalence of $\fdvect$ with itself.  Therefore, by \cref{prop:CEquivToIndC} and \cref{lemma:antiEquivIndPro} we have that $\vect$ is anti-equivalent to $\pro{\fdvect}$.
\end{example}

\begin{example} \label{ex:GG}
	The following example is a result of \cite{getzler99}. Let $\cat\coloneqq\mathsf{Cog}$ be the category of coassociative counital coalgebras, let $\cat'\coloneqq \mathsf{fCog}$ be the full subcategory of finite dimensional coassociative counital coalgebras, and let $\cat''\coloneqq\mathsf{fAlg}$ be the category of finite dimensional unital associative algebras. One can prove that every coassociative counital coalgebra is the colimit of its finite dimensional subcoalgebras. By the same arguments as in the previous two examples, \cref{prop:CEquivToIndC} gives an equivalence of categories between $\mathsf{Cog}$ and $\ind{\mathsf{fCog}}$. Linear duality induces an anti-equivalence of categories between $\mathsf{fCog}$ and $\mathsf{fAlg}$. Therefore, by \cref{lemma:antiEquivIndPro} we have that $\mathsf{Cog}$ is anti-equivalent to $\pro{\mathsf{fAlg}}$.
\end{example}

\subsection{Model structures}

Model categories were first introduced in the book \cite{quillen67}, and have since then assumed a central role in mathematics. Another very clear and more modern introduction is given by the book \cite{hovey}.

\begin{definition}
	A \emph{closed model category} is a tuple $(\cat,W,C,F)$, where $\cat$ is a category and $W,C,F$ are three classes of morphisms in $\cat$, such that
	\begin{enumerate}
		\item the category $\cat$ is complete and cocomplete,
		\item the class $W$ contains all isomorphisms and has the \emph{2-out-of-3 property}, i.e. for any two composable morphisms  $f,g$ in $\cat$, if two of $f,g,gf$ are in $W$, then so is the third,
		\item all three classes $W,C,F$ are closed under retracts,
		\item if we have $i\in C$ and $p\in F$ such that one of them is in $W$ and a commutative square
		\begin{center}
			\begin{tikzpicture}
				\node (a) at (0,0) {};
				\node (b) at (1.5,0) {};
				\node (c) at (0,1.5) {};
				\node (d) at (1.5,1.5) {};
				
				\draw[->] (a) -- (b);
				\draw[->] (c) -- node[left]{$i$} (a);
				\draw[->] (c) -- (d);
				\draw[->] (d) -- node[right]{$p$} (b);
				
				\draw[->,dashed] (a) -- (d);
			\end{tikzpicture}
		\end{center}
		then there is a diagonal filler (the dashed arrow in the diagram above), and
		\item \label{CM5} every morphism $f$ in $\cat$ can be decomposed both as $f=pi$ with $p\in F$ and $i\in C\cap W$, and as $f=qj$ with $q\in F\cap W$ and $j\in C$.
	\end{enumerate}
	The elements of $W,C,F,C\cap W$ and $F\cap W$ are called \emph{weak equivalences}, \emph{cofibrations}, \emph{fibrations}, \emph{trivial} (or: \emph{acyclic}) \emph{cofibrations}, and \emph{trivial} (or: \emph{acyclic}) \emph{fibrations} respectively. We will usually omit the classes of arrows in the notation and simply speak of the model category $\cat$.
\end{definition}

\begin{remark}
	The definition given above is the original one of \cite{quillen67}. There exist various variations on the axioms throughout the literature. For example certain authors only ask that $\cat$ is closed under finite limits and colimits, and \cite{hovey} among others require that the factorization of axiom (\ref{CM5}) above be functorial. Usually, all results given for a specific set of axioms also hold for all other possible definitions of model category, with only minor modifications of the statements.
\end{remark}

Two sets of objects often play a central role in model categories.

\begin{definition}
	Let $\cat$ be a model category. An object $c\in\cat$ is said to be \emph{fibrant} if the unique morphism $\emptyset\to c$ from the initial object of $\cat$ to $c$ is a fibration. It is \emph{cofibrant} if the unique morphism $c\to*$ to the final object of $\cat$ is a cofibration.
\end{definition}

Under some assumptions, one can transfer a model structure from a category to another along an adjunction. We will not directly need this, but only the following easier result.

\begin{proposition} \label{prop:modelStrTransportedViaEquiv}
	Let $(\mathcal{C},W,C,F)$ be a closed model category, let $\mathcal{D}$ be another category, and let
	$$G:\mathcal{D}\longrightarrow\mathcal{C}$$
	be an equivalence of categories. Then the tuple $(\mathcal{D},G^{-1}(W),G^{-1}(C),G^{-1}(F))$ is a closed model category. Moreover, $G$ is a Quillen equivelence between the two model categories.
\end{proposition}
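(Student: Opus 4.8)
The plan is to transport all the model-category structure across the equivalence $G$ in the most direct possible way, using the fact that an equivalence of categories is ``as good as'' an isomorphism for every purely categorical notion. The essential observation is that $G$ admits a quasi-inverse $G^{-1}$ (by \cref{thm:equivOfCats} we may even take $G$ to be part of an adjoint equivalence, so that unit and counit are natural isomorphisms), and every one of the five model-category axioms is phrased entirely in terms of limits, colimits, commutative squares, retracts, and lifting properties --- all of which are preserved and reflected by an equivalence of categories.

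First I would verify the completeness and cocompleteness of $\mathcal{D}$: since $G$ is an equivalence, it both preserves and reflects all limits and colimits, so $\mathcal{D}$ inherits completeness and cocompleteness from $\mathcal{C}$. Next I would check the 2-out-of-3 property and closure under retracts for the three classes $G^{-1}(W)$, $G^{-1}(C)$, $G^{-1}(F)$. Here the key point is that a morphism $f$ in $\mathcal{D}$ lies in $G^{-1}(W)$ precisely when $G(f)\in W$, and that $G$ sends composites to composites, retract diagrams to retract diagrams, and isomorphisms to isomorphisms; so each of these closure properties transfers mechanically from the corresponding property in $\mathcal{C}$. Then I would treat the lifting axiom: given a square in $\mathcal{D}$ with $i\in G^{-1}(C)$ and $p\in G^{-1}(F)$ one of which is a weak equivalence, apply $G$ to obtain a square in $\mathcal{C}$ to which the lifting axiom of $\mathcal{C}$ applies; pulling the resulting filler back through $G^{-1}$ (and using the natural isomorphism $G^{-1}G\cong\id_{\mathcal{D}}$ to correct it into an honest filler of the original square) produces the required diagonal. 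Finally, for the factorization axiom (\ref{CM5}), I would factor $G(f)$ in $\mathcal{C}$ and transport the factorization back along $G^{-1}$, again using that $G^{-1}$ preserves membership in each class and that $G^{-1}G\cong\id$ lets us identify the transported composite with $f$ itself.

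For the last sentence --- that $G$ is a Quillen equivalence --- I would note that by construction $G$ sends cofibrations to cofibrations and (trivial) fibrations to (trivial) fibrations, since the classes on $\mathcal{D}$ are defined as the preimages under $G$; hence $G$ is automatically a right Quillen functor (and its quasi-inverse the corresponding left adjoint). An equivalence of categories trivially induces an equivalence on homotopy categories, and it detects weak equivalences by definition, so the derived adjunction is an equivalence, which is exactly the statement that $G$ is a Quillen equivalence.

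The only genuinely delicate point I anticipate is the bookkeeping around the natural isomorphisms rather than strict equalities: because $G^{-1}G$ is merely naturally isomorphic to the identity, the transported fillers and factorizations are solutions of squares and factorizations that are isomorphic, not equal, to the given ones, and one must conjugate by the unit/counit isomorphisms to land back on the original diagram. This is precisely where choosing $G$ to be part of an adjoint equivalence (via \cref{thm:equivOfCats}) pays off, since the triangle identities make the conjugation coherent and keep the argument from degenerating into an unenlightening diagram chase. Everything else is a routine transfer, which is why I would state the verifications briefly and concentrate the written proof on this coherence issue.
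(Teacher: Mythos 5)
Your proposal is correct and follows exactly the route the paper takes: complete $G$ to an adjoint equivalence via \cref{thm:equivOfCats} and then transport each axiom mechanically, with the only care needed being the conjugation by the unit/counit isomorphisms. The paper simply records this as ``a straightforward verification of the axioms,'' so your more detailed account of the coherence bookkeeping is a faithful expansion of the same argument.
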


\begin{proof}
	One can always complete $G$ to an adjoint equivalence by \cref{thm:equivOfCats}. With this ulterior data, the proof is a straightforward verification of the axioms.
\end{proof}

\subsection{The homotopy category of a model category}

Given a category $\cat$ with a set of morphisms $W$ giving a ``good" notion of weak equivalence, one wants to consider the category $\ho{\cat}\coloneqq\cat[W^{-1}]$ obtained by formally inverting the arrows in $W$, see e.g. \cite[Def. 1.2.1]{hovey}. This in general results in an object that is not a category anymore because it might happen that the morphisms between two objects of $\cat[W^{-1}]$ do not form a set. However, if $\cat$ is a model category, then localization at $W$ gives a well defined category. We sketch the standard way of proving this here, following \cite[Ch. 1]{hovey}, as well as some results and definitions we will need throughout the text.

\begin{definition}
	Let $\cat$ be a model category, and let $x\in\cat$ be an object.
	\begin{enumerate}
		\item A \emph{cylinder object} for $x$ is a splitting of the fold map $\nabla:x\sqcup x\to x$ into
		$$x\sqcup x\longrightarrow I\times x\longrightarrow x\ ,$$
		with the first map a cofibration and the second map a weak equivalence.
		\item Dually, a \emph{path object} for $x$ is a splitting of the diagonal map $\Delta:x\to x\times x$ into
		$$x\longrightarrow x^I\longrightarrow x\times x\ ,$$
		with the first map a weak equivalence and the second map a fibration.
	\end{enumerate}
\end{definition}

Notice that, since we are in a model category, cylinder and path objects always exist.

\begin{lemma}
	Let $\cat$ be a model category. Any two choices of cylinder object for an object $x\in\cat$ are weakly equivalent. Dually, any two choices of path object are weakly equivalent.
\end{lemma}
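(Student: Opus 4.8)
The plan is to reduce both halves of the statement to the lifting and factorization axioms, by first replacing an arbitrary cylinder (resp.\ path) object with a better-behaved one and then comparing the two via a diagonal filler. I would spell out the cylinder case and obtain the path case by formal dualization. Throughout, I read ``weakly equivalent'' as being connected by a zig-zag of weak equivalences (equivalently, becoming isomorphic in the homotopy category).

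First I would introduce the auxiliary notion of a \emph{good} cylinder object for $x$, meaning a factorization $x\sqcup x\xrightarrow{i} C\xrightarrow{p} x$ of the fold map in which $i$ is a cofibration and $p$ is moreover an acyclic fibration, not merely a weak equivalence. The first key step is then to show that every cylinder object admits a weak equivalence to a good one. Given a cylinder object $(C,i,p)$, I would apply the factorization axiom (\ref{CM5}) to the weak equivalence $p\colon C\to x$, writing $p=q\,j$ with $j\colon C\to\widetilde C$ an acyclic cofibration and $q\colon\widetilde C\to x$ a fibration. By the 2-out-of-3 property $q$ is a weak equivalence, hence an acyclic fibration, while $j\circ i$ is a cofibration as a composite of cofibrations and satisfies $q\,(j\circ i)=p\,i=\nabla$. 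Thus $(\widetilde C,\, j\circ i,\, q)$ is a good cylinder object and $j\colon C\to\widetilde C$ is a weak equivalence.

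The second step compares an arbitrary cylinder object with a good one by lifting. Let $(C,i,p)$ be any cylinder object and let $(\widetilde C',\widetilde{\imath}',q')$ be a good cylinder object --- for instance the one produced from a second cylinder object $C'$ by the previous step, together with its weak equivalence $j'\colon C'\to\widetilde C'$. I would then form the commutative square with top arrow $\widetilde{\imath}'\colon x\sqcup x\to\widetilde C'$, left arrow the cofibration $i$, bottom arrow $p$, and right arrow the acyclic fibration $q'$; it commutes because both composites equal $\nabla$. The lifting axiom supplies a diagonal $g\colon C\to\widetilde C'$ with $q'\,g=p$, and 2-out-of-3 applied to $p$ and $q'$ shows that $g$ is a weak equivalence. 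Combining the two steps, any two cylinder objects $C$ and $C'$ fit into a zig-zag of weak equivalences $C\xrightarrow{g}\widetilde C'\xleftarrow{j'}C'$, which is the sense in which they are weakly equivalent.

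The path case is obtained by dualizing every step: a good path object is a factorization $x\xrightarrow{s}P\xrightarrow{\pi}x\times x$ with $s$ an acyclic cofibration and $\pi$ a fibration; every path object admits a weak equivalence \emph{from} a good one, obtained by factoring $s$ as an acyclic cofibration followed by a fibration and invoking 2-out-of-3; and the comparison lift now runs out of the good object rather than into it. The one genuine obstacle --- and the reason a direct comparison fails --- is that two cylinder objects cannot be compared straightaway, since the structure map $p$ is only a weak equivalence and so the naive lifting square carries no fibration on the right. The role of the good refinement is precisely to upgrade this weak equivalence to an acyclic fibration, after which the lifting axiom applies; everything else is bookkeeping with 2-out-of-3 and composites of (co)fibrations.
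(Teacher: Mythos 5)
Your proof is correct and follows essentially the same strategy as the paper's: produce a cylinder object whose projection to $x$ is an acyclic fibration, lift the cofibration $x\sqcup x\to C$ against it, and conclude with 2-out-of-3. The only cosmetic difference is that the paper obtains its ``good'' cylinder directly by factoring the fold map $\nabla$ via axiom (\ref{CM5}), whereas you manufacture one by refactoring the structure map $p'$ of the second cylinder --- both routes land in the same lifting square.
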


\begin{proof}
	By the definition of model category, there exists a choice of cylinder object $x\times I$ as a trivial fibration. Suppose that $\tilde{x}$ is another cylinder for $x$. By lifting property of cofibrations with respect to trivial fibrations, there exists a diagonal filler for the following diagram.
	\begin{center}
		\begin{tikzpicture}
			\node (a) at (0,1.5) {$x\sqcup x$};
			\node (b) at (2,1.5) {$x\times I$};
			\node (c) at (0,0) {$\tilde{x}$};
			\node (d) at (2,0) {$x$};
			
			\draw[->] (a) -- (b);
			\draw[->] (b) -- (d);
			\draw[->] (a) -- (c);
			\draw[->] (c) -- (d);
			\draw[->,dashed] (c) -- (b);
		\end{tikzpicture}
	\end{center}
	It is a weak equivalence by $2$-out-of-$3$ property. The proof for path objects is dual.
\end{proof}

These objects can be used to define notions of homotopies between maps, in an analogous way to what is done in topology.

\begin{definition}
	Let $f,g:x\to y$ be two morphisms in the model category $\cat$.
	\begin{enumerate}
		\item The two morphisms are \emph{left homotopic} if there exists a map
		$$H:x\times I\longrightarrow y$$
		from a cylinder object for $x$ to $y$ such that the following diagram commutes.
		\begin{center}
			\begin{tikzpicture}
				\node (a) at (0,1.5) {$x$};
				\node (b) at (0,0) {$x\sqcup x$};
				\node (c) at (2,0) {$x\times I$};
				\node (d) at (4,0) {$y$};
				\node (e) at (0,-1.5) {$x$};
				
				\draw[->] (a)--(b);
				\draw[->] (e)--(b);
				\draw[->] (b)--(c);
				\draw[->] (c)-- node[above]{$H$} (d);
				\draw[->] (a) to[out=0,in=140] node[above]{$f$} (d);
				\draw[->] (e) to[out=0,in=-140] node[below]{$g$} (d);
			\end{tikzpicture}
		\end{center}
		We denote this by $f\sim_\ell g$.
		\item Dually, the morphisms are \emph{right homotopic} if there exists a moprhism $H:x\to y^I$ from $x$ to a path object for $y$ making the dual diagram commute. We denote this by $f\sim_r g$.
		\item If $f$ and $g$ are both left and right homotopic, then we simply say that $f$ and $g$ are \emph{homotopic} and write $f\sim g$.
	\end{enumerate}
\end{definition}

These notions of homotopy behave especially well when the source is cofibrant and the domain is fibrant.

\begin{proposition}
	Let $\cat$ be a model category, and let $x,y$ be objects of $\cat$.
	\begin{enumerate}
		\item If $x$ is cofibrant, then left homotopy is an equivalence relation on $\hom_\cat(x,y)$, being left homotopic implies being right homotopic, and right homotopies can be realized using any choice of path object.
		\item Dually, if $y$ is fibrant, then right homotopy is an equivalence relation on $\hom_\cat(x,y)$, being right homotopic implies being left homotopic, and left homotopies can be realized using any choice of cylinder object.
		\item In particular, if $x$ is cofibrant and $y$ is fibrant, then being left homotopic, right homotopic or homotopic are all equivalent notions and form an equivalence relation on $\hom_\cat(x,y)$.
	\end{enumerate}
\end{proposition}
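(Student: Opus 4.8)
The plan is to prove part (1) in full and then obtain part (2) by dualizing every construction (exchanging cylinder objects with path objects, cofibrations with fibrations, pushouts with pullbacks) and part (3) by combining the two. Throughout I write a cylinder object for $x$ as $x\sqcup x\xrightarrow{j}x\times I\xrightarrow{\sigma}x$, with endpoint inclusions $i_0,i_1\colon x\to x\times I$ obtained by precomposing $j$ with the two inclusions $x\to x\sqcup x$, so that $\sigma i_0=\sigma i_1=\id_x$.

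The first and crucial step is to record a property of cylinder objects on a cofibrant source: if $x$ is cofibrant, then $i_0$ and $i_1$ are trivial cofibrations. Indeed, $\emptyset\to x$ is a cofibration, so each inclusion $x\to x\sqcup x$ is a pushout of it and hence a cofibration; composing with the cofibration $j$ shows that $i_0,i_1$ are cofibrations, and since $\sigma$ is a weak equivalence with $\sigma i_k=\id_x$, the $2$-out-of-$3$ property makes $i_k$ a weak equivalence. This is the only place where cofibrancy of $x$ is used.

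Granting this, reflexivity and symmetry of $\sim_\ell$ hold without any hypothesis: $f\sim_\ell f$ is witnessed by $f\sigma\colon x\times I\to y$ for any cylinder, and a witness for $f\sim_\ell g$ becomes one for $g\sim_\ell f$ after swapping the two inclusions, which leaves $\sigma$ unchanged. For transitivity I would glue cylinders: given left homotopies $H\colon x\times I\to y$ from $f$ to $g$ and $H'\colon x\times I'\to y$ from $g$ to $h$, form the pushout $C''\coloneqq(x\times I)\sqcup_x(x\times I')$ identifying the $i_1$-end of the first cylinder with the $i_0'$-end of the second. Since $Hi_1=g=H'i_0'$, the maps $H,H'$ glue to a map $C''\to y$, and it remains to check that $C''$ is a cylinder for $x$ with endpoints $i_0$ and $i_1'$. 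That the structure map $C''\to x$ is a weak equivalence follows because the leg $x\times I'\to C''$ is a pushout of the trivial cofibration $i_1$ of the first step, hence a trivial cofibration, while $x\times I'\to x$ is a weak equivalence, so $2$-out-of-$3$ applies. The more delicate point, and the main obstacle, is that the endpoint inclusion $x\sqcup x\to C''$ is a cofibration: I would exhibit it as the restriction along the inclusion $x\sqcup x\to x^{\sqcup 3}$ of the outer two summands of the map $x^{\sqcup 3}\to C''$, itself a pushout of the coproduct cofibration $j\sqcup j'\colon x^{\sqcup 4}\to(x\times I)\sqcup(x\times I')$ along the fold $x^{\sqcup 4}\to x^{\sqcup 3}$ identifying the two middle copies of $x$; the inclusion of the outer summands is again a cofibration, being a pushout of $\emptyset\to x$.

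Finally, to show that left homotopy implies right homotopy, realized in an arbitrary path object, I fix any path object $y\xrightarrow{s}y^I\xrightarrow{(d_0,d_1)}y\times y$ and consider the commutative square with top edge $sf\colon x\to y^I$, left edge the trivial cofibration $i_0\colon x\to x\times I$, right edge the fibration $(d_0,d_1)$, and bottom edge $(f\sigma,H)\colon x\times I\to y\times y$; commutativity uses $d_0s=d_1s=\id_y$ together with $\sigma i_0=\id_x$ and $Hi_0=f$. A lift $K'\colon x\times I\to y^I$ then exists, and $K\coloneqq K'i_1$ is the desired right homotopy, since $d_0K=f\sigma i_1=f$ and $d_1K=Hi_1=g$; as the path object was arbitrary this gives the last assertion of (1). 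Part (2) is the formal dual, and part (3) follows at once: when $x$ is cofibrant and $y$ is fibrant, (1) gives $\sim_\ell\,\Rightarrow\,\sim_r$ and (2) gives $\sim_r\,\Rightarrow\,\sim_\ell$, so the three relations coincide and each is an equivalence relation by (1). Apart from the cofibration claim in the gluing step, every verification is a direct application of the lifting axiom and $2$-out-of-$3$.
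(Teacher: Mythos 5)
Your proof is correct. The paper does not prove this proposition itself but simply cites \cite[Prop.\ 1.2.5]{hovey}, and your argument --- showing the endpoint inclusions of a cylinder on a cofibrant object are trivial cofibrations, gluing cylinders along a pushout for transitivity, and lifting against the fibration $y^I\to y\times y$ to convert a left homotopy into a right homotopy in an arbitrary path object --- is precisely the standard argument given there.
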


\begin{proof}
	See e.g. \cite[Prop. 1.2.5]{hovey}.
\end{proof}

\begin{theorem}\label{thm:model categories}
	Let $\cat$ be a model category. Let $\cat_{cf}$ be the full subcategory of $\cat$ given by the objects that are both fibrant and cofibrant at the same time. The inclusion
	\[
	\cat_{cf}\longrightarrow\cat
	\]
	induces an equivalence of categories
	\[
	\cat_{cf}/{}_\sim\longrightarrow\ho{\cat}
	\]
	between the quotient of $\cat_{cf}$ by the homotopy relation and the homotopy category of $\cat$. Moreover, an arrow in $\cat_{cf}$ is a weak equivalence in $\cat$ if, and only if it is an isomorphism in $\ho{\cat}$.
\end{theorem}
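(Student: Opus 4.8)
The plan is to verify, using \cref{thm:equivOfCats}, that the induced functor $\bar\gamma\colon\cat_{cf}/{}_\sim\to\ho{\cat}$ is essentially surjective and fully faithful, where $\gamma\colon\cat\to\ho{\cat}$ denotes the canonical localization functor. First I would settle essential surjectivity and the well-definedness of $\bar\gamma$. Using the factorization axiom, every object $x$ admits a cofibrant replacement $Qx\to x$ (a trivial fibration with $Qx$ cofibrant) and a fibrant replacement $x\to Rx$ (a trivial cofibration with $Rx$ fibrant); combining these yields an object $RQx\in\cat_{cf}$ joined to $x$ by a zig-zag of weak equivalences. As $\gamma$ inverts weak equivalences by construction, $\gamma(x)\cong\gamma(RQx)$, giving essential surjectivity. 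To see that $\bar\gamma$ descends to the quotient, I would show homotopic maps become equal after $\gamma$: if $f\sim_\ell g$ via $H\colon x\times I\to y$, then the structural weak equivalence $p\colon x\times I\to x$ satisfies $pi_0=pi_1=\id_x$ for the two inclusions $i_0,i_1\colon x\to x\times I$, so $\gamma(i_0)=\gamma(i_1)$ (because $\gamma(p)$ is invertible) and hence $\gamma(f)=\gamma(H)\gamma(i_0)=\gamma(H)\gamma(i_1)=\gamma(g)$; the right-homotopic case is dual. Since left, right and plain homotopy coincide on $\cat_{cf}$, this makes $\bar\gamma$ well defined.

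The technical core is a Whitehead-type statement: every weak equivalence $f\colon x\to y$ between fibrant--cofibrant objects is a homotopy equivalence, i.e. admits $g\colon y\to x$ with $gf\sim\id_x$ and $fg\sim\id_y$. I would prove this by factoring $f=pi$ with $i$ a trivial cofibration and $p$ a fibration; the 2-out-of-3 property forces $p$ to be a trivial fibration, and the intermediate object is again fibrant--cofibrant, so it suffices to treat trivial cofibrations and trivial fibrations separately. For a trivial cofibration $i\colon x\to z$ between such objects, lifting $\id_x$ against $x\to *$ produces a retraction $r$ with $ri=\id_x$, and a further lift against a path object $z^I$ produces a homotopy $ir\sim\id_z$; the trivial-fibration case is dual. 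A consequence I will use afterwards is that, conversely, a homotopy equivalence between fibrant--cofibrant objects is itself a weak equivalence, established by a standard retract-of-a-weak-equivalence argument.

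To build the inverse functor I would promote the replacements to a functor valued in the homotopy quotient. Because the definition of model category adopted here does not demand functorial factorizations, I cannot choose $Q$ and $R$ strictly; instead, for $f\colon x\to y$ the lift of the composite $Qx\to x\xrightarrow{f}y$ through the trivial fibration $Qy\to y$ exists by cofibrancy of $Qx$ and is unique up to homotopy, and dually for $R$. This uniqueness-up-to-homotopy is precisely what makes composition well behaved, so $RQ$ defines an honest functor $\cat\to\cat_{cf}/{}_\sim$. By the Whitehead statement, $RQ$ sends weak equivalences to isomorphisms in $\cat_{cf}/{}_\sim$, so the universal property of the localization factors it through $\Psi\colon\ho{\cat}\to\cat_{cf}/{}_\sim$. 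I would then check that $\Psi\bar\gamma$ and $\bar\gamma\Psi$ are naturally isomorphic to the respective identities, the isomorphisms being assembled from the replacement weak equivalences. The step I expect to be the main obstacle is exactly this Whitehead lemma together with the bookkeeping needed to turn the non-functorial replacements into a genuine functor: controlling the homotopy relation so that the lifts' uniqueness-up-to-homotopy yields well-defined composition is the delicate point.

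Finally, for the ``moreover'' clause: if $f$ in $\cat_{cf}$ is a weak equivalence, the Whitehead statement makes it a homotopy equivalence, hence an isomorphism in $\cat_{cf}/{}_\sim$ and therefore in $\ho{\cat}$. Conversely, if $f$ becomes an isomorphism in $\ho{\cat}$, transporting along the equivalence $\bar\gamma$ shows it is an isomorphism in $\cat_{cf}/{}_\sim$, that is, a homotopy equivalence, which by the consequence recorded above is a weak equivalence.
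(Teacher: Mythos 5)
Your outline is correct and is essentially the standard argument (essential surjectivity via fibrant--cofibrant replacement, descent of the localization functor to homotopy classes, the Whitehead lemma in both directions, and the up-to-homotopy functoriality of non-functorial replacements), which is exactly the proof the paper delegates to \cite[Prop.~1.2.10]{hovey} rather than writing out. The only place where you are lighter than that reference is the converse Whitehead step (homotopy equivalence $\Rightarrow$ weak equivalence), where the retract argument needs the section $s$ of the fibration $p$ obtained by lifting a homotopy $pp'\sim\id$, so that $p$ becomes a retract of $sp\sim\id$; but this is the argument you are alluding to, so there is no gap in substance.
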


\begin{proof}
	See e.g. \cite[Prop. 1.2.10]{hovey}.
\end{proof}

\section{The Goldman--Millson theorem and the Dolgushev--Rogers theorem} \label{sect:GMandDRThms}

The goal of this section is to introduce two classical results on the Maurer--Cartan locus of Lie algebras: the Goldman--Millson theorem and the Dolgushev--Rogers theorem. They play a very important role in deformation theory, see for example the original paper by W. M. Goldman and J. J. Millson \cite{goldman88}. These results are proven by working directly on the algebras coming into play, but both have a distinct homotopical flavor. It is one of the main objectives of the rest of the present paper to expose a new, completely homotopy theoretical approach to the proof of these theorems.

\subsection{Maurer--Cartan elements, gauge equivalences, and the Goldman--Millson theorem} \label{subsect:MCelGaugeEquiv}

A concept ubiquitous in mathematics today is that of the Maurer--Cartan elements of a Lie algebra.

\begin{definition}
	Let $\g$ be a Lie algebra. A \emph{Maurer--Cartan element} of $\g$ is a degree $1$ element $x\in\g^1$ satisfying the Maurer--Cartan equation
	\[
	dx + \frac{1}{2}[x,x] = 0\ .
	\]
	The set of Maurer--Cartan elements of $\g$ is denoted by $\MC(\g)$.
\end{definition}

Given $\lambda\in\g^0$, one can define a vector field $\g^1\to\g^1$ by
\[
x\longmapsto d\lambda + [\lambda,x]\ .
\]
It is tangent to $\MC(\g)$ in the sense that its flow --- whenever it exists --- preserves Maurer--Cartan elements. Let $x_0\in\MC(\g)$ and suppose that the flow
\[
\dot x(t) = d\lambda + [\lambda,x(t)],\qquad x(0) = x_0
\]
exists up to at least $t=1$. Then we say that $\lambda$ is a \emph{gauge} between $x_0$ and $x_1\coloneqq x(1)$, and that $x_0$ and $x_1$ are \emph{gauge equivalent}. We denote the equivalence relation generated by this notion of gauge equivalence by $\sim_{\mathrm{gauge}}$.

\medskip

One can guarantee that the flows as above exist for all times $t$ by requiring the Lie algebras one considers to be \emph{complete} with respect to some well-behaved filtration. As a first example, let $\g$ be a Lie algebra, then we define its \emph{canonical filtration} by
\[
F_n^\lie\g\coloneqq\left\{x\in\g\mid x\textrm{ can be obtained as a bracketing of at least }n\textrm{ elements of }\g\right\}\ .
\]
Notice that $F_n^\lie\g$ is at the same time a sub Lie algebra and a Lie ideal of $\g$.

\begin{definition}
	A Lie algebra $\g$ is \emph{complete with respect to its canonical filtration} if
	\[
	\g\cong\lim_{n}\g/F_n^\lie\g
	\]
	as Lie algebras.
\end{definition}

If $\g$ is complete with respect to its canonical filtration, then being gauge equivalent is an equivalence relation on Maurer--Cartan elements on the nose by the Baker--Campbell--Hausdorff formula.

\begin{definition}
	The \emph{moduli space of Maurer--Cartan elements} of a Lie algebra $\g$ is the quotient
	\[
	\MCbar(\g)\coloneqq\MC(\g)/\sim_\mathrm{gauge}
	\]
	of the space of Maurer--Cartan elements by the gauge relation.
\end{definition}

Notice that any morphism of Lie algebras induces a morphism between the moduli spaces of Maurer--Cartan elements. In fact,
\[
\MCbar:\left\{\g\in\lie\text{-}\mathsf{alg}\text{ complete with respect to its canonical filtration}\right\}\longrightarrow\sets
\]
is a functor from Lie algebras which are complete with respect to the canonical filtration to sets.

\medskip

One can generalize the canonical filtration defined above to a more general notion of filtration for Lie algebras.

\begin{definition}
	Let $\g$ be a Lie algebra. A \emph{(complete) filtration} for $\g$ is a sequence of subspaces
	\[
	\g\coloneqq F_0\g \supseteq F_1\g\supseteq F_2\g\supseteq F_3\g\supseteq\ldots
	\]
	such that
	\begin{enumerate}
		\item $\g\cong\lim_n\g/F_n\g$,
		\item $[F_i\g,F_j\g]\subseteq F_{i+j}\g$, and
		\item $d(F_n\g)\subseteq F_n\g$.
	\end{enumerate}
	A \emph{complete Lie algebra} is a couple $(\g,F_\bullet\g)$, where $\g$ is a Lie algebra and $F_\bullet\g$ is a filtration for $\g$.
\end{definition}

\begin{definition}
	Let $(\g,F_\bullet\g),(\h,F_\bullet\h)$ be complete Lie algebras. A morphism of Lie algebras $\phi:\g\to\h$ is \emph{filtered} if $\phi(F_n\g)\subseteq F_n\h$ for all $n$. It is a \emph{filtered quasi-isomorphism} if moreover it is a quasi-isomorphism and if the induced morphisms
	\[
	\g/F_n\g\longrightarrow\h/F_n\h
	\]
	are all quasi-isomorphisms. More generally, we say that a morphism of Lie algebras $\phi:\g\to\h$ is a filtered quasi-isomorphism if there exist filtrations on $\g$ and $\h$ making it into one.
\end{definition}

We will denote the category of complete Lie algebras and filtered morphisms by $\cdgl$.

\medskip

The Goldman--Millson theorem tells us how the moduli space of Maurer--Cartan elements behaves under filtered quasi-isomorphisms. It was first proven in \cite{goldman88}, then generalized in \cite{yekutieli12}. The version we expose here is the most modern and general one we are aware of, and it is a direct corollary of the Dolgushev--Rogers theorem \cite{dolgushev15}, which we will present shortly.

\begin{theorem}[Goldman--Millson] \label{thm:GM}
	Let $\g,\h$ be complete Lie algebras, and let $\phi:\g\to\h$ be a filtered quasi-isomorphism. Then
	\[
	\MCbar(\phi):\MCbar(\g)\longrightarrow\MCbar(\h)
	\]
	is a bijection.
\end{theorem}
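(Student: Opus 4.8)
The plan is to reduce the statement to the tower of nilpotent quotients $\g/F_n\g$, to run an induction along the filtration whose inductive step is governed by a central extension and a short piece of obstruction theory, and finally to pass to the inverse limit. First I would record that Maurer--Cartan elements are compatible with the completion: since $\g\cong\lim_n\g/F_n\g$ and the Maurer--Cartan equation is preserved by the canonical projections, a compatible family of Maurer--Cartan elements is exactly a Maurer--Cartan element, so $\MC(\g)\cong\lim_n\MC(\g/F_n\g)$; likewise the gauge action of $\g^0\cong\lim_n(\g/F_n\g)^0$ is compatible with the tower. The same holds for $\h$, and $\phi$ respects everything because it is filtered. It therefore suffices to control the induced maps $\phi_n\colon\g/F_n\g\to\h/F_n\h$ on $\MCbar$ uniformly in $n$ and then to argue that bijectivity survives the limit.

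For the inductive step, fix $n\geq1$ and consider the central extension of nilpotent dg Lie algebras
\[
0\longrightarrow F_n\g/F_{n+1}\g\longrightarrow\g/F_{n+1}\g\longrightarrow\g/F_n\g\longrightarrow0\ ,
\]
whose kernel $\mathrm{gr}_n\g\coloneqq F_n\g/F_{n+1}\g$ is abelian and central in $\g/F_{n+1}\g$, since $[F_n\g,F_1\g]\subseteq F_{n+1}\g$ and $[F_n\g,F_n\g]\subseteq F_{2n}\g\subseteq F_{n+1}\g$. The filtered quasi-isomorphism hypothesis enters here: applying the five lemma to the long exact cohomology sequences of this extension for $\g$ and for $\h$, and using that every $\phi_m$ is a quasi-isomorphism, forces the associated graded map $\mathrm{gr}_n\phi$ to be a quasi-isomorphism as well. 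Obstruction theory for a central extension then runs as follows: given $\overline{x}\in\MC(\g/F_n\g)$ and any lift $\widetilde{x}\in(\g/F_{n+1}\g)^1$, the element $d\widetilde{x}+\tfrac12[\widetilde{x},\widetilde{x}]$ lands in $(\mathrm{gr}_n\g)^2$, is a cocycle, and its class in $H^2(\mathrm{gr}_n\g)$ is independent of the lift because centrality gives $o(\widetilde{x}+a)=o(\widetilde{x})+da$ for $a\in(\mathrm{gr}_n\g)^1$; this class is the sole obstruction to lifting $\overline{x}$ to $\MC(\g/F_{n+1}\g)$. When it vanishes the lifts form a torsor under the cocycles $Z^1(\mathrm{gr}_n\g)$, and the central gauges $\lambda\in(\mathrm{gr}_n\g)^0$ act on a fixed fibre through $\lambda\mapsto d\lambda$, i.e. through $B^1(\mathrm{gr}_n\g)$. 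Hence the fibres of $\MCbar(\g/F_{n+1}\g)\to\MCbar(\g/F_n\g)$ are controlled entirely by $H^1(\mathrm{gr}_n\g)$ and $H^2(\mathrm{gr}_n\g)$. For the base of the induction, the bottom quotient may be taken abelian (as it is for the canonical filtration), where $\MC$ is the set of degree-$1$ cocycles and gauge equivalence is by coboundaries, so $\MCbar$ is just $H^1$ and any quasi-isomorphism is a bijection on it. Comparing the two extensions through $\phi$ and invoking that $\mathrm{gr}_n\phi$ is an isomorphism on all cohomology, a diagram chase then propagates bijectivity of $\MCbar(\phi_n)$ to $\MCbar(\phi_{n+1})$, so every $\MCbar(\phi_n)$ is a bijection.

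The hard part will be the passage to the inverse limit together with the bookkeeping of gauge equivalence in the fibrewise step, and I expect this to be the genuine obstacle rather than the homological input. Two points are delicate. First, $\MCbar$ does not obviously commute with inverse limits: the transition maps of the tower $\{\MCbar(\g/F_n\g)\}_n$ need not be surjective (the obstruction classes in $H^2(\mathrm{gr}_n\g)$ can be nonzero), so to identify $\MCbar(\g)$ with $\lim_n\MCbar(\g/F_n\g)$ --- and to conclude that a compatible family of bijections is a bijection in the limit --- one must lift a compatible family of gauge classes to literally compatible representatives, and dually show that gauges witnessing equivalence at each finite stage can be chosen compatibly; this is a Mittag--Leffler-type argument fed by the torsor description above. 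Second, the clean slogan ``the fibre is $H^1$'' hides the residual action of the gauge stabiliser of $\overline{x}$, so the fibrewise statement is properly one about the Deligne groupoid $\del(-)$ rather than the bare set $\MCbar=\pi_0\del$.

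Accordingly, the cleanest way to organise the whole argument is to prove that $\phi$ induces an equivalence of Deligne groupoids and only then take $\pi_0$; this is exactly the refinement supplied by the Dolgushev--Rogers theorem, of which the present statement is a corollary. As the introduction indicates, the homotopy-theoretic machinery developed in the remainder of the paper will recast precisely this obstruction-and-gauge analysis as a statement about homotopies of maps in a model category, so that the hands-on induction and Mittag--Leffler bookkeeping above are replaced by a single model-categorical argument.
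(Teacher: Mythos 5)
Your proposal follows essentially the same route as the paper, which for this classical statement only records a sketch: an induction on the nilpotent quotients $\g/F_n\g$ driven by obstruction theory on the morphism of short exact sequences $0\to F_n\g/F_{n+1}\g\to\g/F_{n+1}\g\to\g/F_n\g\to0$ induced by $\phi$, with the abelian case as base, together with the observation that the statement is a direct corollary of the Dolgushev--Rogers theorem (\cref{thm:DR}) upon taking $\pi_0$. The delicate points you flag --- the passage to the inverse limit and the gauge bookkeeping, which are cleanest at the level of the Deligne groupoid --- are precisely the ones the paper defers to \cite{goldman88}, \cite{yekutieli12} and \cite{dolgushev15}.
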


The proof of this theorem is an induction on nilpotence degrees using obstruction theory. One starts by proving that the statement is always true for abelian Lie algebras, which implies that it is also true for the induced maps
\[
\g/F_2\g\longrightarrow\h/F_2\h
\]
and
\[
F_n\g/F_{n+1}\g\longrightarrow F_n\h/F_{n+1}\h\ ,
\]
for any $n\ge2$. Then one can proceed by induction, where the induction step is done by using some obstruction theory on the morphism of short exact sequences
\[
\begin{CD}
	0 @>>> \frac{F_n\g}{F_{n+1}\g} @>>> \frac{\g}{F_{n+1}\g} @>>> \frac{\g}{F_n\g} @>>> 0\\
	@. @VVV @VVV @VVV\\
	0 @>>> \frac{F_n\h}{F_{n+1}\h} @>>> \frac{\h}{F_{n+1}\h} @>>> \frac{\h}{F_n\h} @>>> 0
\end{CD}
\]
induced by $\phi$.

\begin{remark}
	A more homotopical proof of the Goldman--Millson theorem has recently appeared in \cite{buijs16}. We will look at it in some detail in \cref{subsect:relations with BFMT}.
\end{remark}

\subsection{The deformation $\infty$-groupoid and the Dolgushev--Rogers theorem} \label{subsect:SullivanAlg}

The Goldman--Millson theorem is in fact only an avatar of a ``higher" result. In \cite{hinich97}, Hinich introduced a simplicial set $\MC_\bullet(\g)$ associated to a complete Lie algebra encoding not only Maurer--Cartan elements and gauges between them, but also all higher relations between those. This object was subsequently generalized and studied in depth by Getzler in \cite{getzler09}.

\medskip

Denote by $\Omega_\bullet$ the Sullivan algebra. It is the simplicial commutative algebra of polynomial differential forms on the $n$-simplex. Namely, we have
\[
\Omega_n\coloneqq\frac{\k[t_0,\ldots,t_n,dt_0,\ldots,dt_n]}{\left(\sum_{i=0}^nt_i = 1,\ \sum_{i=0}^ndt_i = 0\right)}
\]
endowed with the unique square zero differential extending $d(t_i) = dt_i$, and with the obvious simplicial maps induced by pullback along the inclusion of lower dimensional simplices and the collapsing of simplices along subsimplices. It was first introduced by Sullivan in \cite{sullivan77} in the context of rational homotopy theory.

\medskip

Let $\g$ be a nilpotent Lie algebra, i.e. a Lie algebra so that there exists an integer $N>0$ such that any bracketing of $N$ elements of $\g$ gives zero. The smallest such $N$ is called the \emph{nilpotency degree} of $\g$. The tensor product $\g\otimes\Omega_n$ is again a Lie algebra by
\[
[x\otimes\alpha,y\otimes\beta] = [x,y]\otimes \alpha\beta\ .
\]
Therefore, $\g\otimes\Omega_\bullet$ is a simplicial Lie algebra by tensoring the simplicial maps of $\Omega_\bullet$ by the identity on $\g$. More generally, if $(\g,F_\bullet\g)$ is a complete Lie algebra, we will denote
\[
\g\otimes\Omega_\bullet\coloneqq\lim_n\left(\left(\g/F_n\g\right)\otimes\Omega_\bullet\right)\ .
\]

\begin{definition} \label{def:DHGgpd}
	Let $\g$ be a complete Lie algebra. The \emph{deformation $\infty$-groupoid} of $\g$ --- also called the \emph{Deligne--Hinich--Getzler $\infty$-groupoid} --- is the simplicial set
	\[
	\MC_\bullet(\g)\coloneqq\MC(\g\otimes\Omega_\bullet)\ .
	\]
\end{definition}

Notice that $\MC_0(\g) = \MC(\g)$, while $\MC_1(\g)$ gives a notion of gauge between two Maurer--Cartan elements which --- although slightly different --- is equivalent to the one defined in \cref{subsect:MCelGaugeEquiv}, see for example \cite[Prop. 4.3]{rn17cosimplicial}. In particular, we have
\[
\pi_0\MC_\bullet(\g) \cong \MCbar(\g)\ .
\]
The assignment of the Deligne--Hinich--Getzler $\infty$-groupoid to a Lie algebra gives a functor
\[
\MC_\bullet:\cdgl\longrightarrow\ssets\ .
\]
The Deligne--Hinich--Getzler $\infty$-groupoid of a Lie algebra has nice homotopical properties: it is always fibrant in the category of simplicial sets, as implied by the following theorem.

\begin{theorem}[{\cite[Prop. 2.2.3]{hinich97}}, {\cite[Prop. 4.7]{getzler09}}, and {\cite[Thm. 2]{rogers16}}]\label{thm:Getzler}
	The functor $\MC_\bullet$ has image in the full subcategory of Kan complexes.
\end{theorem}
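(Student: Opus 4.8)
The plan is to verify the Kan extension condition directly. Fix $n\ge 1$ and $0\le k\le n$; I must show that every map of simplicial sets $\Lambda^n_k\to\MC_\bullet(\g)$ extends along the horn inclusion $\Lambda^n_k\hookrightarrow\Delta^n$. Unwinding the definition $\MC_\bullet(\g)_m=\MC(\g\otimes\Omega_m)$, and using the standard adjunction for the Sullivan functor, such a horn is the same datum as a Maurer--Cartan element of the complete Lie algebra $\g\otimes\Omega^{\Lambda^n_k}$, where $\Omega^{\Lambda^n_k}=\hom_{\ssets}(\Lambda^n_k,\Omega_\bullet)$ is the commutative dg algebra of polynomial forms on the horn. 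Extending the horn to an $n$-simplex then amounts to lifting this element through the restriction map
\[
\MC(\g\otimes\Omega_n)\longrightarrow\MC(\g\otimes\Omega^{\Lambda^n_k})\ ,
\]
so the whole statement reduces to surjectivity of this map on Maurer--Cartan sets.

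The first key input is purely commutative-algebraic and independent of $\g$: the restriction of forms $\Omega_n\to\Omega^{\Lambda^n_k}$ is a surjection of commutative dg algebras whose kernel is acyclic. Surjectivity reflects the extension property of the Sullivan algebra (polynomial forms on the simplex restrict onto any subcomplex of its boundary), and acyclicity of the kernel encodes the fact that the horn is a deformation retract of the simplex; concretely one constructs an explicit contracting homotopy, for instance of Dupont type. Tensoring with $\g$ — and, in the non-nilpotent case, passing through the tower $\{\g/F_n\g\}$ — yields a surjection of complete dg Lie algebras
\[
p\colon\g\otimes\Omega_n\longrightarrow\g\otimes\Omega^{\Lambda^n_k}
\]
whose kernel $K$ is filtered acyclic, i.e.\ acyclic at each stage $K/F_mK$.

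The second key input is an obstruction-theoretic lifting lemma: a surjection of complete dg Lie algebras with filtered acyclic kernel is surjective on Maurer--Cartan sets. Given a target element $\beta$, I first lift it to an element $\alpha_0$ of the source using surjectivity of $p$; its defect $d\alpha_0+\tfrac12[\alpha_0,\alpha_0]$ then lies in the kernel $K$. I correct $\alpha_0$ recursively along the filtration: at each stage the obstruction to being Maurer--Cartan is a cocycle in the acyclic complex $K/F_{m+1}K$, which I annihilate using the contracting homotopy, adjusting $\alpha$ by an element of $F_mK$ so as to disturb neither the lower stages nor the image $\beta$. Completeness of $\g\otimes\Omega_n$ guarantees that this sequence of corrections converges to a genuine Maurer--Cartan element lifting $\beta$.

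Finally, to deduce the general complete case from the nilpotent one, I use that $\MC_\bullet(\g)=\lim_m\MC_\bullet(\g/F_m\g)$ and that the argument above shows each transition map $\MC_\bullet(\g/F_{m+1}\g)\to\MC_\bullet(\g/F_m\g)$ to be a Kan fibration of Kan complexes, so that the limit of the tower is again Kan. The main obstacle is the recursive correction step: one must organize the obstruction calculus so that the contracting homotopy on the kernel produces corrections confined to ever-deeper filtration layers. This is precisely where the completeness hypothesis, and the compatibility of the chosen homotopy with the filtration, are indispensable.
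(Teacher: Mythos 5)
Your argument is correct, but note that the paper offers no proof of \cref{thm:Getzler} at all: it is quoted as a known result with references to \cite{hinich97}, \cite{getzler09}, and \cite{rogers16}. What you have written is essentially a faithful reconstruction of the proof in those sources --- reduce the horn-filling condition to surjectivity of $\MC(\g\otimes\Omega_n)\to\MC(\g\otimes\Omega^{\Lambda^n_k})$ via the extension lemma for polynomial forms, and conclude by the standard obstruction-theoretic lemma that a surjection of nilpotent (or complete, via the tower) dg Lie algebras with acyclic kernel is surjective on Maurer--Cartan sets --- so there is nothing to correct beyond the minor imprecision that the obstruction at stage $m$ lives in the acyclic associated graded piece $F_mK/F_{m+1}K$ rather than in $K/F_{m+1}K$.
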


The analogue to the Goldman--Millson theorem in this context is the Dolgushev--Rogers theorem \cite[Thm. 2.2]{dolgushev15}.

\begin{theorem}[Dolgushev--Rogers] \label{thm:DR}
	Let $\g,\h$ be complete Lie algebras, and let $\phi:\g\to\h$ be a filtered quasi-isomorphism. Then
	\[
	\MC_\bullet(\phi):\MC_\bullet(\g)\longrightarrow\MC_\bullet(\h)
	\]
	is a homotopy equivalence of simplicial sets.
\end{theorem}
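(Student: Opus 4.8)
The plan is to reduce the statement to the abelian case by an induction along the filtration, organized around the tower structure of the completion. First I would note that, by the very definition of $\MC_\bullet$ on a complete Lie algebra, the functor $\MC_\bullet$ commutes with the limit defining the completion: since $\MC$ is cut out by equations it preserves limits of sets, so that
\[
\MC_\bullet(\g)\cong\lim_n\MC_\bullet(\g/F_n\g)\ ,
\]
and similarly for $\h$. Hence $\MC_\bullet(\phi)$ is the inverse limit of the tower of maps $\MC_\bullet(\g/F_n\g)\to\MC_\bullet(\h/F_n\h)$ between the nilpotent truncations. It therefore suffices to prove two separate things: that each truncated map is a weak equivalence, and that passing to the inverse limit of the two towers preserves weak equivalences.

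For the levelwise statement I would induct on $n$ using the short exact sequences
\[
0\longrightarrow F_n\g/F_{n+1}\g\longrightarrow\g/F_{n+1}\g\longrightarrow\g/F_n\g\longrightarrow0\ ,
\]
whose kernel is an \emph{abelian} ideal (as $[F_n\g,F_n\g]\subseteq F_{2n}\g\subseteq F_{n+1}\g$ for $n\ge1$). The key input, a refinement of \cref{thm:Getzler} due to Getzler, is that a surjection of nilpotent Lie algebras induces a Kan fibration on $\MC_\bullet$; thus the sequence above yields a fibration $\MC_\bullet(\g/F_{n+1}\g)\to\MC_\bullet(\g/F_n\g)$ whose fibre over a vertex $\alpha$ is $\MC_\bullet$ of the abelian ideal equipped with the $\alpha$-twisted differential $d+[\alpha,-]$. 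The morphism $\phi$ induces a map of fibrations
\[
\begin{CD}
\MC_\bullet(F_n\g/F_{n+1}\g) @>>> \MC_\bullet(\g/F_{n+1}\g) @>>> \MC_\bullet(\g/F_n\g)\\
@VVV @VVV @VVV\\
\MC_\bullet(F_n\h/F_{n+1}\h) @>>> \MC_\bullet(\h/F_{n+1}\h) @>>> \MC_\bullet(\h/F_n\h)
\end{CD}
\]
in which the right-hand map is a weak equivalence by the inductive hypothesis, and the left-hand (fibre) map is a weak equivalence because $\phi$ restricts, via the five lemma applied to the short exact sequences, to a quasi-isomorphism of the abelian graded pieces, compatibly with the twists. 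So both the base case and the fibre comparison reduce to the abelian case.

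The abelian case is the computational heart. For an abelian dg Lie algebra $\mathfrak{a}$ the Maurer--Cartan equation is linear, so $\MC_\bullet(\mathfrak{a})$ is the simplicial set of degree-$1$ cocycles in $\mathfrak{a}\otimes\Omega_\bullet$; using the acyclicity of the Sullivan forms $\Omega_\bullet$ together with the Dold--Kan correspondence one identifies its homotopy groups with the cohomology of $\mathfrak{a}$ in the appropriate degrees. A quasi-isomorphism of abelian (twisted) dg Lie algebras then induces isomorphisms on all homotopy groups, i.e. a weak equivalence. Feeding this into the map of fibrations and comparing the long exact sequences of homotopy groups via the five lemma closes the induction, showing each truncated map is a weak equivalence; since all the simplicial sets involved are Kan complexes by \cref{thm:Getzler}, these weak equivalences are genuine homotopy equivalences by Whitehead's theorem.

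Finally I would promote the levelwise weak equivalence to the inverse limit. As each tower consists of Kan fibrations between Kan complexes, its inverse limit computes the homotopy limit, and a levelwise weak equivalence of such towers induces a weak equivalence on limits; concretely this is governed by the Milnor $\varprojlim^1$ exact sequence relating $\pi_*$ of the limit to $\varprojlim$ and $\varprojlim^1$ of the homotopy groups of the stages. I expect the \emph{main obstacle} to be precisely this passage to the limit: one must check that the $\varprojlim^1$ terms do not obstruct injectivity or surjectivity on homotopy groups, which is exactly where completeness of the filtration is genuinely used (it forces the relevant towers of homotopy groups to satisfy Mittag--Leffler). The secondary difficulty is the explicit abelian computation of $\MC_\bullet(\mathfrak{a})$, but that follows routinely from the acyclicity of $\Omega_\bullet$.
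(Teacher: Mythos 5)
Your argument is correct in outline, but it is not the paper's route: it is a reconstruction of the original Dolgushev--Rogers proof (induction along the tower of nilpotent quotients, Kan fibrations from surjections of nilpotent Lie algebras, obstruction theory in the abelian graded pieces, then passage to the inverse limit), which the paper explicitly declines to reproduce, referring instead to \cite{dolgushev15}. The proof the paper actually records for \cref{thm:DR}, at the end of \cref{subsect:relations with BFMT}, is a two-line deduction: filtered quasi-isomorphisms are weak equivalences in the Buijs--F\'elix--Murillo--Tanr\'e model structure (\cref{prop:filterd qi are we in BFMT}), the right Quillen functor $\hom_{\dgl}(\mc_\bullet,-)$ preserves weak equivalences, and $\MC_\bullet(\g)\simeq\hom_{\dgl}(\mc_\bullet,\g)$ naturally (\cref{thm:equivDHGgpd}). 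The trade-off is explicit in the paper: its deduction is short but circular in spirit, since \cref{prop:filterd qi are we in BFMT} is itself proved in the literature using the Dolgushev--Rogers theorem; your argument is self-contained modulo Getzler's fibration theorem and the abelian computation, which is precisely why the original authors ran it.

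Two steps in your sketch carry the real content and cannot be dispatched by the five lemma as written. First, the bottom of the long exact sequence of a fibration is only an exact sequence of pointed sets, and the fibration $\MC_\bullet(\g/F_{n+1}\g)\to\MC_\bullet(\g/F_n\g)$ may have \emph{empty} fibres over some components of the base: surjectivity on $\pi_0$ is exactly the question of lifting Maurer--Cartan elements across the extension, governed by obstruction classes in $H^2$ of the twisted abelian kernel, and comparing $\pi_0$'s further requires the $\pi_1$-action on $\pi_0$ of the fibre rather than a literal five lemma. Second, ``compatibly with the twists'' is not automatic: you must verify that the quasi-isomorphism $F_n\g/F_{n+1}\g\to F_n\h/F_{n+1}\h$ remains a quasi-isomorphism after twisting the differentials by $[\alpha,-]$ and $[\phi(\alpha),-]$, which in \cite{dolgushev15} requires an auxiliary filtration argument. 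Neither point is fatal --- both are resolved in the original paper --- but they are where the work lives, so a complete write-up must supply them rather than cite exactness.
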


Again, the proof of this theorem is done by induction on the nilpotence degree of the Lie algebras, with the help some obstruction theory and some simplicial homotopy theory (see the original paper \cite{dolgushev15} for a clear exposition of all the details). What is striking to us here is the fact that this theorem has a distinct homotopical flavor --- its statement is of the form: ``morphisms in such and such class are sent to weak equivalences of simplicials sets by the functor $\MC_\bullet$" --- but its proof is not really in this context and relies on different methods.

\begin{remark}
	In fact, the material presented in this last subsection is true in a more general context: $\L_\infty$-algebras and $\infty$-morphisms between them, see \cite{getzler09} and \cite{dolgushev15}. A generalization of the methods of the present article to treat the more general case could be an interesting research direction.
\end{remark}

\section{The Vallette model structure on conilpotent coalgebras over a cooperad} \label{sect:ValletteModelStr}

In this section, we present the Vallette model structure on conilpotent coalgebras over the Koszul dual cooperad of a Koszul operad $\P$, see \cite{vallette14}. It generalizes the model structure on conilpotent coassociative coalgebras defined by Positselski \cite[Sect. 9.3]{positselski11}. We further develop the material found in Vallette's article and completely characterize the class weak equivalences in terms of filtered quasi-isomorphisms. Finally, we specialize to $\P=\susp\otimes\com$ and study the model structure induced on conilpotent Lie coalgebras.

\subsection{The Vallette model structure}

In order to make certain operadic results work, one needs an additional grading on operads and cooperads.

\begin{definition}
	Let $\P$ be an operad. A \emph{connected weight grading} on $\P$ is a decomposition
	\[
	\P = \k\id\oplus\P^{(1)}\oplus\P^{(2)}\oplus\cdots
	\]
	into subspaces $\P^{(\omega)}$ of \emph{weight $\omega$} such that the total weight is preserved both by the differential and by the composition map of $\P$. Dually, a \emph{connected weight grading} on a cooperad $\C$ is a similar decomposition such that the differential and the decomposition map of $\C$ preserve the total weight. Morphisms of (co)operads and twisting morphisms between connected weight graded (co)operads must preserve the weight.
\end{definition}

Let $(E,R)$ be an operadic quadratic data, see \cite[Sect. 7.1]{vallette12} for definitions and basic facts. Suppose that we have a decomposition of the generating $\S$-module
\[
E = E^{(1)}\oplus E^{(2)}\oplus E^{(3)}\oplus\cdots
\]
into subspaces $E^{(\omega)}$ of weight $\omega$ such that the relations in $R$ are of homogeneous weight. Then the weight grading on $E$ induces a connected weight grading on both the quadratic operad $\P\coloneqq\P(E,R)$ and the quadratic cooperad $\P^{\antishriek}\coloneqq\C(E,R)$, and the twisting morphism
\[
\kappa:\P^{\antishriek}\longrightarrow\P
\]
respects this weight grading.

\begin{remark}
	If $E(0)=E(1)=0$, then a good weight grading is obtained by imposing $E^{(\omega)}=E(\omega+1)$, and the induced connected weight gradings on the (co)operad are given by
	\[
	\P^{(\omega)} = \P(\omega+1),\qquad\text{and}\qquad(\P^{\antishriek})^{(\omega)}=\P^{\antishriek}(\omega+1)\ .
	\]
	More generally, if an operad $\P$ is \emph{connected}, i.e. if we have $\P(0)=0$ and $\P(1)=\k\id$, then
	\[
	\P^{(\omega)} = \P(\omega+1)
	\]
	is a canonical weight grading on $\P$, and similarly for cooperads.
\end{remark}

For the rest of this section, fix a Koszul operad $\P$ with a connected weight grading obtained as above from a weight grading on the quadratic data, and denote
$$\kappa:\P^{\antishriek}\longrightarrow\P$$
the canonical twisting morphism given by Koszul duality. The category of $\P$-algebras is given the Hinich model structure \cite{hinich97haha}, where the weak equivalences are the quasi-isomorphisms and the fibrations are the surjections.

\begin{remark}
	The theory presented below has been vastly generalized by Le Grignou in \cite{legrignou16}: one can consider a twisting morphism
	\[
	\alpha:\C\longrightarrow\P
	\]
	from a curved conilpotent cooperad to an operad and obtain a model structure on the category of conilpotent $\C$-coalgebras by pulling back weak equivalences and cofibrations via the cobar construction $\Omega_\alpha$ induced by $\alpha$.
\end{remark}

The twisting morphism $\kappa$ induces a bar-cobar adjunction
$$\adjunction{\Omega\kappa}{\mathsf{conil.\ }\P^{\antishriek}\text{-}\mathsf{cog.}}{\P\text{-}\mathsf{alg}}{\bar_\kappa}$$
between conilpotent $\P^{\antishriek}$-coalgebras and $\P$-algebras.

\begin{theorem}[{\cite[Th. 2.1]{vallette14}}] \label{thm:ValletteModelStr}
	The following three classes of morphisms define a model structure on the category $\mathsf{conil.\ }\P^{\antishriek}\text{-}\mathsf{cog.}$ of conilpotent $\P^{\antishriek}$-coalgebras.
	\begin{itemize}
		\item The class $W$ of weak equivalences is given by all morphism in $\mathsf{conil.\ }\P^{\antishriek}\text{-}\mathsf{cog.}$ that are sent to quasi-isomorphisms by $\Omega_\kappa$.
		\item The class $C$ of cofibrations is given by the injective morphisms.
		\item The class $F$ of fibrations is given by all the morphisms that have the right lifting property with respect to $C\cap W$.
	\end{itemize}
	With this model structure, all conilpotent $\P^{\antishriek}$-coalgebras are cofibrant, and the fibrant objects are the quasi-free $\P^{\antishriek}$-coalgebras. Moreover, the bar-cobar adjunction is a Quillen equivalence between $\mathsf{conil.\ }\P^{\antishriek}\text{-}\mathsf{cog.}$ and $\P\text{-}\mathsf{alg}$.
\end{theorem}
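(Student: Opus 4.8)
The plan is to recognize this statement as the construction of a left-induced (transferred) model structure along the bar--cobar adjunction $\Omega_\kappa \dashv \bar_\kappa$, with the Hinich model structure on $\P$-algebras serving as the input. First I would dispatch the formal axioms. The category of conilpotent $\P^{\antishriek}$-coalgebras is complete and cocomplete (indeed locally presentable), so the limit/colimit axiom holds. The class $W$ is the preimage under the functor $\Omega_\kappa$ of the quasi-isomorphisms; since quasi-isomorphisms contain the isomorphisms and are closed under the $2$-out-of-$3$ property and under retracts, and since $\Omega_\kappa$ is a functor, $W$ inherits all of these properties verbatim. The class $C$ of monomorphisms contains the isomorphisms and is closed under retracts, while $F$, being defined by a right lifting property, is automatically closed under retracts. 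One half of the lifting axiom --- that maps in $C\cap W$ lift against maps in $F$ --- holds by the very definition of $F$. It also follows at once that every object is cofibrant, since the map from the initial object $0\to C$ is always injective.

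The technical heart of the argument is the interaction of the two constructions coming from Koszul duality, and the key input, where Koszulness of $\P$ enters, is that the counit $\Omega_\kappa\bar_\kappa A \to A$ is a quasi-isomorphism for every $\P$-algebra $A$. Granting this, the triangle identity expressing $\Omega_\kappa C \to \Omega_\kappa\bar_\kappa\Omega_\kappa C \to \Omega_\kappa C$ as the identity, together with the counit at $\Omega_\kappa C$ being a quasi-isomorphism, forces $\Omega_\kappa(\eta_C)$ to be a quasi-isomorphism by $2$-out-of-$3$; hence the unit $\eta_C : C \to \bar_\kappa\Omega_\kappa C$ lies in $W$. Since $\eta_C$ is moreover injective, it is an acyclic cofibration whose target $\bar_\kappa\Omega_\kappa C$ is quasi-free. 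This produces a functorial factorization of the terminal map $C\to 0$ as an acyclic cofibration followed by a fibration --- that is, a functorial fibrant replacement by quasi-free coalgebras --- and it already identifies the candidate fibrant objects as the quasi-free ones.

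To finish the remaining lifting and the factorizations I would argue as follows. For the fibrant objects, I would show that a quasi-free coalgebra $Q$ has the right lifting property against every acyclic cofibration $i\colon C\hookrightarrow D$, using that $\Omega_\kappa(i)$ is an injective quasi-isomorphism between quasi-free (hence cofibrant) algebras, so an acyclic cofibration in the Hinich structure, and then transporting the lift across the adjunction and the cofree structure of $Q$; conversely, any fibrant object is a retract of its replacement $\bar_\kappa\Omega_\kappa C$ and is therefore quasi-free. The genuinely hard part is the factorization axiom (CM5): because the cofibrations are \emph{all} monomorphisms rather than the closure of a set of generators, the small object argument is unavailable in its naive form, and this is the main obstacle. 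The resolution, following Positselski and Vallette, is to build both factorizations explicitly out of the bar--cobar resolution together with cylinder and path objects in the coalgebra category --- gluing $\eta_C$ to a path object of the fibrant replacement of the target for the (acyclic cofibration, fibration) factorization, and a dual mapping-cylinder construction for the (cofibration, acyclic fibration) factorization, with the weak-equivalence halves controlled by $2$-out-of-$3$ after applying $\Omega_\kappa$. Alternatively one may invoke an existence theorem for left-induced model structures, in which case the single nontrivial hypothesis to check is the acyclicity condition, and this is precisely what the fibrant replacement $\bar_\kappa\Omega_\kappa$ of the previous paragraph supplies.

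The Quillen equivalence is then essentially bookkeeping. By construction $\Omega_\kappa$ sends monomorphisms to quasi-free extensions and, by the definition of $W$, sends weak equivalences to quasi-isomorphisms, so it preserves cofibrations and acyclic cofibrations and the adjunction is a Quillen adjunction. It is a Quillen equivalence because the counit is a weak equivalence on all objects (the Koszulness input) and the unit is a weak equivalence on all objects (shown above), which is more than enough to conclude that the derived unit and counit are isomorphisms in the homotopy categories.
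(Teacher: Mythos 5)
This theorem is not proved in the paper at all: it is imported verbatim from Vallette's article (cited as Theorem 2.1 of that reference), so there is no in-paper argument to compare yours against. Your outline does reproduce the architecture of the proof in the cited source: the formal axioms and the cofibrancy of all objects are the easy part; the Koszulness of $\P$ enters through the counit $\Omega_\kappa\bar_\kappa A\to A$ being a quasi-isomorphism, whence the unit $\eta_C$ is an injective weak equivalence onto a quasi-free coalgebra and serves as a fibrant replacement; the fibrant objects are identified with the quasi-free coalgebras; and the factorization axiom is the genuine difficulty, resolved by explicit constructions rather than the small object argument. That is indeed how the proof goes, and your identification of CM5 as the crux is correct and honest.

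Two points in your sketch assert as routine what is actually the content. First, your justification that $\Omega_\kappa(i)$ is an acyclic cofibration because it is ``an injective quasi-isomorphism between cofibrant algebras'' is not a valid inference: in the Hinich model structure a monomorphism between cofibrant algebras need not be a cofibration. The correct argument, which you gesture at later, is that the cobar construction of a monomorphism of \emph{conilpotent} coalgebras is a quasi-free extension filtered by the coradical filtration, and one must actually verify this. Second, your converse for fibrant objects (``a retract of $\bar_\kappa\Omega_\kappa C$ is therefore quasi-free'') uses that the class of quasi-free coalgebras is closed under retracts; this is a nontrivial lemma in the source, not a formality. Neither point is a wrong idea, but as written your proposal defers to these claims without supplying them, and together with the unexecuted factorization axiom they constitute the bulk of the actual proof.
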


Vallette further proves that all weak equivalences of conilpotent $\P^{\antishriek}$-coalgebras are quasi-isomorphisms \cite[Prop. 2.5]{vallette14} and that the quasi-isomorphisms of $\P^{\antishriek}$-coalgebras respecting the coradical filtration are weak equivalences \cite[Prop. 2.3]{vallette14}. We will use these results to give a complete characterization of the class of weak equivalences in this model structure. The proof is analogous of the one given by Positselski \cite[Sect. 9.3]{positselski11} in the case of coassociative coalgebras, and all the ingredients are already present in \cite{vallette14}.

\subsection{Characterization of the weak equivalences}

\begin{definition} \label{def:filtrationOfCCog}
	Let $C$ be a conilpotent $\P^{\antishriek}$-coalgebra. A \emph{filtration} on $C$ is a sequence of sub-chain complexes
	$$0=F_{-1}C\subseteq F_0C\subseteq F_1C\subseteq\ldots\subseteq C$$
	satisfying \cite[Prop. 2.2]{vallette14}. Namely, we require that
	\begin{enumerate}
		\item the filtration is exhaustive: $\colim{n} F_nC\cong C$,
		\item it respects the coproduct, that is
		\[
		\overline{\Delta}_C(F_nC)\subseteq\bigoplus_{\substack{1\le w\le n,\ k\ge1\\n_1+\cdot+n_k = n-w}}\left(\P^{\antishriek}(k)^{(w)}\otimes F_{n_1}C\otimes\cdots\otimes F_{n_k}C\right)^{\S_k},
		\]
		where $\overline{\Delta}_C(c)\coloneqq\Delta_C(c)-c$, and
		\item it is preserved by the differential: $d_C(F_nC)\subseteq F_nC$.
	\end{enumerate}
\end{definition}

\begin{example}
	A filtration that exists for any conilpotent $\P^{\antishriek}$-coalgebra $C$ is the \emph{coradical filtration}. It is defined by
	$$F_n^{\mathrm{corad.}}C\coloneqq\left\{c\in C\mid\Delta_C(c)\in\bigoplus_{w=0}^n{\P^{\antishriek}}^{(w)}(C)\right\}\ .$$
\end{example}

\begin{definition}
	Let $C_1,C_2$ be two conilpotent $\P^{\antishriek}$-coalgebras, and let $\phi:C_1\to C_2$ be a morphism of $\P^{\antishriek}$-coalgebras.
	\begin{enumerate}
		\item Let $F_\bullet C_1$ and $F_\bullet C_2$ be filtrations. The morphism $\phi$ is \emph{filtered} (with respect to the given filtrations) if for each $n\ge0$ we have $\phi(F_nC_1)\subseteq F_nC_2$.
		\item The morphism $\phi$ is a \emph{filtered quasi-isomorphism} if it is a quasi-isomorphism and if there are filtrations $F_\bullet C_1$ and $F_\bullet C_2$ such that $\phi$ is filtered and such that for each $n\ge0$, the morphism $\phi$ induces a quasi-isomorphism
		\[
		F_nC_1/F_{n-1}C_1\longrightarrow F_nC_2/F_{n-1}C_2\ .
		\]
	\end{enumerate}
\end{definition}

\begin{remark}
	Notice that the coradical filtration is final, in the sense that if we put the coradical filtration on $C_1$ and any filtration on $C_2$ then any morphism of $\P^{\antishriek}$-coalgebras will be filtered.
\end{remark}

\begin{theorem} \label{thm:charWE}
	The class $W$ of weak equivalence is the smallest class of arrows of $\mathsf{conil.\ }\P^{\antishriek}\text{-}\mathsf{cog.}$ containing all filtered quasi-isomorphisms and which is closed under the $2$-out-of-$3$ property.
\end{theorem}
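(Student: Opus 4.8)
The plan is to prove the two inclusions separately. Write $W'$ for the smallest class of morphisms containing all filtered quasi-isomorphisms and closed under the $2$-out-of-$3$ property; note that $W'$ is then automatically closed under composition, since if $f,g\in W'$ then $gf\in W'$ by $2$-out-of-$3$. For the inclusion $W'\subseteq W$ it suffices to check that every filtered quasi-isomorphism is a weak equivalence: the class $W$ already satisfies $2$-out-of-$3$ as part of the model structure of \cref{thm:ValletteModelStr}, so minimality of $W'$ yields the inclusion. The reverse inclusion $W\subseteq W'$ is the substantial one, and I would obtain it from the unit of the bar--cobar adjunction.

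First I would treat $W'\subseteq W$. Let $\phi\colon C_1\to C_2$ be a filtered quasi-isomorphism with respect to admissible filtrations $F_\bullet C_i$ as in \cref{def:filtrationOfCCog}. These induce a filtration on the cobar constructions $\Omega_\kappa C_i$, where the filtration degree of a monomial is the sum of the filtration degrees of its inputs. By the admissibility condition the reduced coproduct $\overline\Delta$ strictly drops the total filtration degree (the weight $w$ appearing there satisfies $w\ge 1$), so the twisting part of the cobar differential vanishes on the associated graded. Hence $\mathrm{gr}\,\Omega_\kappa C_i$ is a free $\P$-algebra on $\mathrm{gr}\,C_i$ equipped only with the internal differential, and $\mathrm{gr}\,\Omega_\kappa\phi$ is the free $\P$-algebra applied to the quasi-isomorphism $\mathrm{gr}\,\phi$. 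Since we work in characteristic $0$, the free $\P$-algebra functor preserves quasi-isomorphisms, so $\Omega_\kappa\phi$ is a quasi-isomorphism on each associated graded piece. As the induced filtration starts at $0$ and is exhaustive, an induction on filtration stages using the five lemma on the short exact sequences $0\to F_{n-1}\to F_n\to F_n/F_{n-1}\to 0$, followed by passage to the filtered colimit, shows that $\Omega_\kappa\phi$ is a quasi-isomorphism, i.e. $\phi\in W$. This is essentially the argument of \cite[Prop. 2.3]{vallette14}, which uses only admissibility of the filtration and not that it is coradical.

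For the inclusion $W\subseteq W'$ I would use the unit $\nu_C\colon C\to\bar_\kappa\Omega_\kappa C$ of the bar--cobar adjunction. The two facts I need are: (a) each $\nu_C$ is a filtered quasi-isomorphism, and (b) the functor $\bar_\kappa$ sends quasi-isomorphisms of $\P$-algebras to filtered quasi-isomorphisms of coalgebras. For (b) one filters $\bar_\kappa A$ by the coradical filtration; the twisting part of the bar differential drops the filtration, so on the associated graded one recovers the cofree $\P^{\antishriek}$-coalgebra on the suspension of $A$ with internal differential, and again the characteristic-zero cofree functor preserves quasi-isomorphisms. Fact (a) is the statement that the bar--cobar resolution is a filtered quasi-isomorphism, which holds because $\P$ is Koszul: on the associated graded the map becomes the acyclic Koszul complex, so $\nu_C$ is a quasi-isomorphism respecting the relevant filtrations. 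Both (a) and (b) are contained in the Koszul-duality machinery of \cite{vallette12} and in \cite{vallette14}.

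Granting (a) and (b), let $\phi\colon C_1\to C_2$ be a weak equivalence. By definition of $W$ the map $\Omega_\kappa\phi$ is a quasi-isomorphism of $\P$-algebras, so by (b) the map $\bar_\kappa\Omega_\kappa\phi$ is a filtered quasi-isomorphism, and by (a) both $\nu_{C_1}$ and $\nu_{C_2}$ are filtered quasi-isomorphisms; all three therefore lie in $W'$. Naturality of $\nu$ gives $\nu_{C_2}\circ\phi=\bar_\kappa\Omega_\kappa\phi\circ\nu_{C_1}$, whose right-hand side is a composite of maps in $W'$ and hence lies in $W'$. Applying $2$-out-of-$3$ to the composable pair $\phi$ and $\nu_{C_2}$, with both $\nu_{C_2}$ and $\nu_{C_2}\circ\phi$ in $W'$, we conclude $\phi\in W'$. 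The main obstacle in this scheme is establishing (a), namely that the bar--cobar unit is genuinely a filtered quasi-isomorphism and not merely a weak equivalence (invoking the latter would be circular); but this is precisely the filtered acyclicity of the Koszul complex guaranteed by the Koszulness of $\P$.
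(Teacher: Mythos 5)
Your proof is correct and takes essentially the same route as the paper: the inclusion of the closure of the filtered quasi-isomorphisms into $W$ follows from the fact that $\Omega_\kappa$ sends filtered quasi-isomorphisms to quasi-isomorphisms, and the reverse inclusion is obtained from the naturality square for the bar--cobar unit together with a double application of the $2$-out-of-$3$ property. The only difference is cosmetic: the paper simply cites \cite{vallette14} for the three ingredient facts (your (a), (b), and the statement that $\Omega_\kappa$ sends filtered quasi-isomorphisms to quasi-isomorphisms) whose filtration arguments you sketch explicitly.
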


As already mentioned, the proof of this theorem is similar to what can be found in \cite[Sect. 9.3]{positselski11}. Before going on, we need a couple of preliminary results, all of which come from \cite{vallette14}.

\begin{lemma} \label{lemma:OmegaOfFQIisQI}
	Let $f$ be a filtered quasi-isomorphism of conilpotent $\P^{\antishriek}$-coalgebras. Then the morphism of $\P$-algebras $\Omega_\kappa f$ is a quasi-isomorphism.
\end{lemma}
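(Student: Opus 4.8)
The plan is to use the filtration hypothesis together with a spectral sequence argument, reducing the statement to the fact that, over a field of characteristic zero, the Schur functor underlying the free $\P$-algebra preserves quasi-isomorphisms. Recall that for a conilpotent $\P^{\antishriek}$-coalgebra $C$ the cobar construction $\Omega_\kappa C$ is, as a graded $\P$-algebra, the free $\P$-algebra $\P(s^{-1}C)$ generated by a desuspension of $C$, equipped with a differential $d = d_1 + d_2$: here $d_1$ is the derivation induced by the internal differential $d_C$ of $C$ together with the internal differential of $\P$, while $d_2$ is the derivation built from the reduced coproduct $\overline{\Delta}_C$ followed by the twisting morphism $\kappa\colon\P^{\antishriek}\to\P$, which turns a single generator into a nontrivial product of generators. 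First I would fix filtrations $F_\bullet C_1$ and $F_\bullet C_2$ witnessing that $f$ is a filtered quasi-isomorphism, and induce an increasing filtration on each cobar construction by declaring $F_p\,\Omega_\kappa C$ to be spanned by the products $\mu(c_1,\ldots,c_k)$, with $\mu\in\P$ and $c_j\in F_{n_j}C$, such that $n_1+\cdots+n_k\le p$. Since $F_{-1}C=0$ every factor sits in non-negative filtration degree, so this filtration is bounded below and exhaustive, and $f$ induces a filtered morphism $\Omega_\kappa f$.

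The crucial computation is the interaction of the differential with this filtration. The derivation $d_1$ preserves the filtration degree, because $d_C(F_nC)\subseteq F_nC$ and the differential of $\P$ does not touch the $C$-arguments. The derivation $d_2$, on the other hand, \emph{strictly decreases} it: by axiom (2) of \cref{def:filtrationOfCCog}, for $c\in F_nC$ every term of $\overline{\Delta}_C(c)$ lies in a summand of weight $w\ge 1$ whose factors have filtration degrees summing to $n-w\le n-1$, so the resulting product of generators has total filtration degree at most $n-1$. Hence on the associated graded complex $E^0 = \mathrm{gr}\,\Omega_\kappa C$ only $d_1$ survives. Since passing to the associated graded commutes with tensor products and, in characteristic zero, with $\S_k$-coinvariants (the averaging idempotent splits off the coinvariants as a direct summand), I obtain a natural isomorphism of dg $\P$-algebras $E^0\cong\P(\mathrm{gr}\,C)$, namely the free $\P$-algebra on the chain complex $\mathrm{gr}\,C$.

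Now $f$ induces a quasi-isomorphism in each filtration degree, hence a quasi-isomorphism $\mathrm{gr}\,f\colon\mathrm{gr}\,C_1\to\mathrm{gr}\,C_2$ of chain complexes. The main point is then that the free $\P$-algebra functor sends quasi-isomorphisms to quasi-isomorphisms: applying the Künneth theorem to the tensor powers $(\mathrm{gr}\,C_i)^{\otimes k}$ and then Maschke's theorem to identify the $\S_k$-coinvariants with a direct summand shows that $\P(\mathrm{gr}\,f)$ is a quasi-isomorphism. Therefore $\Omega_\kappa f$ induces an isomorphism on the $E^1$-pages of the spectral sequences associated to the two filtrations. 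As both filtrations are bounded below and exhaustive, these spectral sequences converge, and the comparison theorem for spectral sequences of filtered complexes gives that $\Omega_\kappa f$ is a quasi-isomorphism, which is what we want.

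I expect the main obstacle to be twofold. The first delicate point is to verify cleanly that $d_2$ strictly lowers the filtration, since this is exactly where axiom (2) of \cref{def:filtrationOfCCog}, and specifically the constraint $w\ge 1$, is indispensable. The second is to control the convergence of the spectral sequences so that the $E^1$-isomorphism genuinely propagates to the abutment; here the bounded-below and exhaustive properties of the induced filtration are essential. The characteristic zero hypothesis enters twice and crucially: in identifying $E^0$ with $\P(\mathrm{gr}\,C)$ via exactness of coinvariants, and in the preservation of quasi-isomorphisms by the Schur functor.
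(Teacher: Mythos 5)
Your proof is correct and is essentially the argument the paper is pointing to: the paper's proof of this lemma is a one-line citation to the proof of \cite[Prop.~2.3]{vallette14}, and that proof is precisely your filtration-plus-spectral-sequence argument (filter $\Omega_\kappa C=\P(s^{-1}C)$ by total filtration degree of the generators, observe that the coproduct part of the differential strictly drops the filtration because of the weight constraint $w\ge 1$, identify $E^0$ with $\P(\mathrm{gr}\,C)$ using characteristic zero, and conclude by K\"unneth--Maschke and the comparison theorem for bounded-below exhaustive filtrations). You have simply written out the details that the paper delegates to the reference.
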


\begin{proof}
	This follows from the proof of \cite[Prop. 2.3]{vallette14}.
\end{proof}

\begin{lemma} \label{lemma:BarOfQIisFQI}
	Let $f$ be a quasi-isomorphism of $\P$-algebras. Then $\bar_\kappa f$ is a filtered quasi-isomorphism.
\end{lemma}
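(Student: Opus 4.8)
The plan is to exhibit the weight filtration on the bar construction as a witness that $\bar_\kappa f$ is a filtered quasi-isomorphism. Recall that, as a graded conilpotent coalgebra, the bar construction $\bar_\kappa A$ of a $\P$-algebra $A$ is the cofree conilpotent $\P^{\antishriek}$-coalgebra on $A$, equipped with the bar differential $d=d_1+d_2$. The term $d_1$ is the coderivation induced by the internal differential of $A$ (and, if present, of $\P^{\antishriek}$) and preserves the weight, while $d_2$ is the twisting part built from the decomposition map of $\P^{\antishriek}$, the twisting morphism $\kappa$, and the structure map of $A$. Since $\kappa$ is concentrated in weight $1$, the coderivation $d_2$ lowers the total weight by exactly $1$. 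First I would verify that the weight filtration
\[
F_n(\bar_\kappa A)\coloneqq\bigoplus_{w\le n}{\P^{\antishriek}}^{(w)}(A)
\]
--- which coincides with the coradical filtration --- is a filtration in the sense of \cref{def:filtrationOfCCog}: it is exhaustive and bounded below with $F_{-1}=0$, it is compatible with the coproduct by the very construction of the cofree coalgebra, and it is preserved by $d$ because $d_1$ preserves the weight and $d_2$ lowers it. Functoriality of $\bar_\kappa$ then shows that $\bar_\kappa f$ is a filtered morphism for these filtrations.

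Next I would compute the associated graded. Because $d_2$ strictly decreases the filtration degree, the differential induced on $F_n/F_{n-1}={\P^{\antishriek}}^{(n)}(A)$ is exactly $d_1$, so the $n$-th associated graded complex is the Schur-type functor ${\P^{\antishriek}}^{(n)}$ evaluated on the chain complex $A$, and the map induced by $\bar_\kappa f$ is ${\P^{\antishriek}}^{(n)}(f)$. The crucial point is that ${\P^{\antishriek}}^{(n)}(-)$ preserves quasi-isomorphisms, and this is where the hypothesis $\mathrm{char}\,\k=0$ enters. Each summand $\left({\P^{\antishriek}}(k)^{(n)}\otimes A^{\otimes k}\right)^{\S_k}$ is computed by first using the Künneth theorem to identify the homology of $A^{\otimes k}$ with $H(A)^{\otimes k}$, and then taking $\S_k$-invariants, which is an exact functor in characteristic zero and hence commutes with homology. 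Applying these operations to the isomorphism $H(f)$ shows that $H\!\left({\P^{\antishriek}}^{(n)}(f)\right)$ is an isomorphism, so the map induced by $\bar_\kappa f$ on $F_n/F_{n-1}$ is a quasi-isomorphism for every $n$.

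It remains to deduce that $\bar_\kappa f$ is itself a quasi-isomorphism, the last ingredient required by the definition of filtered quasi-isomorphism. I would argue by induction on $n$ that each restriction $F_n(\bar_\kappa f)\colon F_n(\bar_\kappa A)\to F_n(\bar_\kappa B)$ is a quasi-isomorphism, the base case being $F_{-1}=0$. For the inductive step, the short exact sequences of complexes $0\to F_{n-1}\to F_n\to F_n/F_{n-1}\to 0$ for $\bar_\kappa A$ and $\bar_\kappa B$ assemble into a morphism of long exact sequences in homology; as $F_{n-1}(\bar_\kappa f)$ is a quasi-isomorphism by induction and the map on $F_n/F_{n-1}$ is a quasi-isomorphism by the previous step, the five lemma gives that $F_n(\bar_\kappa f)$ is a quasi-isomorphism. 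Finally, since the filtration is exhaustive we have $\bar_\kappa A=\colim{n}F_n(\bar_\kappa A)$, and as homology commutes with this filtered colimit, $H(\bar_\kappa f)=\colim{n}H(F_n(\bar_\kappa f))$ is an isomorphism. Hence $\bar_\kappa f$ is a quasi-isomorphism, and together with the computation of the associated graded this proves that $\bar_\kappa f$ is a filtered quasi-isomorphism.

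The hard part will be the characteristic-zero input: establishing that the Schur functors ${\P^{\antishriek}}^{(n)}(-)$ send quasi-isomorphisms to quasi-isomorphisms, which rests on the exactness of $\S_k$-invariants together with the Künneth theorem. Once this is in hand the remaining steps --- the interaction of the bar differential with the weight grading and the inductive comparison along the exhaustive filtration --- are routine bookkeeping.
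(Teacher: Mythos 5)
Your argument is correct, and it is essentially the proof the paper is implicitly relying on: the paper's own ``proof'' is just the citation to \cite[Prop.~2.4]{vallette14}, and what you have written is a faithful reconstruction of that argument --- filter $\bar_\kappa A=\P^{\antishriek}(A)$ by weight, note that $d_1$ preserves and $d_2$ strictly lowers the weight so that the associated graded is ${\P^{\antishriek}}^{(n)}(A)$ with differential $d_1$ and induced map ${\P^{\antishriek}}^{(n)}(f)$, invoke the K\"unneth theorem together with exactness of $\S_k$-invariants in characteristic zero, and conclude by the five lemma and the compatibility of homology with the exhaustive filtered colimit. No gaps; the characteristic-zero step you flag as the crucial input is exactly where the cited proof also puts the weight.
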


\begin{proof}
	This is \cite[Prop. 2.4]{vallette14}.
\end{proof}

\begin{lemma} \label{lemma:counitIsFQI}
	Let $C$ be a conilpotent $\P^{\antishriek}$-coalgebra. Then the unit map
	$$\eta_C:\bar_\kappa\Omega_\kappa C\longrightarrow C$$
	is a filtered quasi-isomorphism.
\end{lemma}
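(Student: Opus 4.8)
The plan is to reduce the statement to the acyclicity of the Koszul complex of $\P$, using the coradical filtration to dispose of the ``filtered'' part for free. First I would equip both $C$ and $\bar_\kappa\Omega_\kappa C$ with their coradical filtrations. By the remark preceding \cref{thm:charWE}, the coradical filtration is final, so any morphism of $\P^{\antishriek}$-coalgebras out of $C$ is automatically filtered; in particular $\eta_C$ is filtered with respect to these filtrations, and the only conditions that remain to be verified are that $\eta_C$ is a quasi-isomorphism and that it induces quasi-isomorphisms on the associated graded.

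For the underlying quasi-isomorphism I would invoke the triangle identities of the bar--cobar adjunction together with the classical bar--cobar resolution on the algebra side. Since $\P$ is Koszul and $\kappa$ is the Koszul twisting morphism, the counit $\epsilon_A\colon\Omega_\kappa\bar_\kappa A\to A$ is a quasi-isomorphism for every $\P$-algebra $A$. Taking $A=\Omega_\kappa C$ and using the identity $\epsilon_{\Omega_\kappa C}\circ\Omega_\kappa(\eta_C)=\id_{\Omega_\kappa C}$, the two-out-of-three property forces $\Omega_\kappa(\eta_C)$ to be a quasi-isomorphism; hence $\eta_C$ is a weak equivalence, and therefore a quasi-isomorphism by \cite[Prop. 2.5]{vallette14}.

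The core of the argument, and the step I expect to be the main obstacle, is the condition on the associated graded. Here I would compute $\mathrm{gr}^{\mathrm{corad.}}$ of both sides and check that, on the associated graded, all parts of the bar and cobar differentials that strictly change the coradical degree drop out, so that the induced differential reduces to the one built solely from $\kappa$. In this way the associated graded of $\eta_C$ should be identified with a map governed by the operadic Koszul complex $\P^{\antishriek}\circ_\kappa\P$, whose acyclicity is exactly the Koszulness of $\P$. The comparison lemma of \cite{vallette12} then lets me pass from the acyclicity of this graded complex to the required quasi-isomorphisms $F_n^{\mathrm{corad.}}/F_{n-1}^{\mathrm{corad.}}$. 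The delicate point is matching the coradical weight grading on $\bar_\kappa\Omega_\kappa C$ with the weight grading that defines Koszulness, while keeping track of the desuspensions.

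Assembling the three parts --- automatic filtered-ness, the total quasi-isomorphism, and the graded quasi-isomorphisms --- then yields that $\eta_C$ is a filtered quasi-isomorphism. I note that, alternatively, all of these ingredients are already contained in the proof of \cite[Thm. 2.1]{vallette14}, where the unit realizes the fibrant replacement of $C$ and the corresponding filtration estimates are carried out explicitly.
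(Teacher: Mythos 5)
Your overall strategy --- triangle identity for the total quasi-isomorphism, a filtration argument reducing to the Koszul complex for the graded part --- is indeed what the cited proof of Vallette's theorem does, and the paper itself offers nothing more than that citation. The second step of your proposal is correct as written: since $\P$ is Koszul the counit $\epsilon_A\colon\Omega_\kappa\bar_\kappa A\to A$ is a quasi-isomorphism for every $A$, so the triangle identity and $2$-out-of-$3$ force $\Omega_\kappa(\eta_C)$ to be a quasi-isomorphism, whence $\eta_C$ is one by \cite[Prop.~2.5]{vallette14}.

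The gap is in the step you yourself flag as the main obstacle: the coradical filtration on $\bar_\kappa\Omega_\kappa C$ is the \emph{wrong} filtration, and with it the associated graded does not reduce to the Koszul complex. Since $\bar_\kappa\Omega_\kappa C$ is quasi-cofree, its coproduct is the cofree one and its coradical filtration is simply the filtration by the weight of the outer cooperad, $F_n^{\mathrm{corad.}}=\bigoplus_{w\le n}{\P^{\antishriek}}^{(w)}\big(\Omega_\kappa C\big)$. The $\kappa$-twisted part of the bar differential strictly \emph{decreases} this weight, so on the associated graded it is precisely the Koszul part of the differential that dies, not the parts you want to discard; what survives is ${\P^{\antishriek}}^{(n)}(\Omega_\kappa C)$ carrying the full cobar differential of $\Omega_\kappa C$. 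Already in filtration degree $0$ the induced map is the inclusion of the primitives of $C$ into $\Omega_\kappa C=\P(C)$, which is not a quasi-isomorphism (take $C=\k c$ one-dimensional with trivial coproduct and zero differential: the target is the free $\P$-algebra on one generator with zero differential). The filtration that actually works is built from the coradical degrees of the $C$-inputs sitting inside $\P^{\antishriek}\circ\P\circ C$, suitably combined with the operadic weights so that it satisfies condition (2) of \cref{def:filtrationOfCCog} \emph{and} only the Koszul differential between the outer $\P^{\antishriek}$ and the inner $\P$ survives in the associated graded; arranging both simultaneously is exactly the nontrivial bookkeeping carried out in the proof of \cite[Thm.~2.6]{vallette14}, and your sketch does not supply it. Your closing fallback to that proof is therefore not an alternative but the actual argument.
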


\begin{proof}
	This is a consequence of the proof of \cite[Thm. 2.6]{vallette14}.
\end{proof}

\begin{proof}[Proof of \cref{thm:charWE}]
	We denote by $\fqi$ the smallest class of arrows in $\mathsf{conil.\ }\P^{\antishriek}\text{-}\mathsf{cog.}$ which contains all filtered quasi-isomorphisms and which is closed under the $2$-out-of-$3$ property.
	
	\medskip
	
	\cref{lemma:OmegaOfFQIisQI} implies that that $\fqi\subseteq W$. To prove the other inclusion, let
	$$f:C\longrightarrow D$$
	be a morphism of conilpotent $\P^{\antishriek}$-coalgebras such that $\Omega_\kappa f$ is a quasi-isomorphism, that is to say $f\in W$. We consider the diagram
	\begin{center}
		\begin{tikzpicture}
			\node (a) at (0,0){$C$};
			\node (b) at (3,0){$D$};
			\node (c) at (0,2){$\bar_\kappa\Omega_\kappa C$};
			\node (d) at (3,2){$\bar_\kappa\Omega_\kappa D$};
			
			\draw[->] (a)-- node[above]{$f$} (b);
			\draw[->] (c)-- node[above]{$\bar_\kappa\Omega_\kappa f$} (d);
			\draw[->] (c)-- node[left]{$\eta_C$} (a);
			\draw[->] (d)-- node[right]{$\eta_D$} (b);
		\end{tikzpicture}
	\end{center}
	where by \cref{lemma:BarOfQIisFQI} the arrow $\bar_\kappa\Omega_\kappa f$ is a filtered quasi-isomorphism, and by \cref{lemma:counitIsFQI} both vertical arrows are also filtered quasi-isomorphisms. A double application of the $2$-out-of-$3$ property proves that $f\in\fqi$, concluding the proof.
\end{proof}

One could consider a twisting morphism different from the one given by Koszul duality and try induce a model structure on conilpotent $\C$-coalgebras using the associated bar-cobar adjunction. This can indeed be done (as long as the characteristic of the base field is $0$), as is shown in \cite[Thm. 10]{legrignou16}. Then one can try to compare two model structures on conilpotent $\C$-coalgebras obtained this way. This is done in \cite[Prop. 32]{legrignou16}. We will need the following special case.

\begin{proposition}[{\cite[Prop. 32]{legrignou16}}] \label{prop:compareModelStr}
	Let $\C$ be a cooperad, let $\P,\P'$ be operads and suppose we have a diagram
	\begin{center}
		\begin{tikzpicture}
			\node (a) at (0,0){$\C$};
			\node (b) at (2,1){$\P$};
			\node (c) at (2,-1){$\P'$};
			
			\draw[->] (a) -- node[above]{$\alpha$} (b);
			\draw[->] (b) -- node[right]{$f$} (c);
			\draw[->] (a) -- node[below]{$f\alpha$} (c);
		\end{tikzpicture}
	\end{center}
	where $\alpha:\C\to\P$ is a twisting morphism and $f$ is a quasi-isomorphism. Then the model structures on conilpotent $\C$-coalgebras induced by $\alpha$ and $f\alpha$ are equal.
\end{proposition}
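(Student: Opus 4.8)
The plan is to compare the two model structures directly on two of their three defining classes of maps, since a model structure is determined by its cofibrations and its weak equivalences (the fibrations being exactly the maps with the right lifting property against trivial cofibrations). First I note that $f\alpha$ really is a twisting morphism: $f$ is a morphism of operads, hence compatible with the convolution structures, so it carries the Maurer--Cartan element $\alpha$ to the Maurer--Cartan element $f\alpha$. Thus both $\alpha$ and $f\alpha$ induce model structures on conilpotent $\C$-coalgebras in the sense of \cite{legrignou16}. In both of these the cofibrations are precisely the monomorphisms of conilpotent $\C$-coalgebras (the degreewise injections), a description which makes no reference to the twisting morphism; hence the two classes of cofibrations coincide and it remains only to compare the weak equivalences.

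By definition a morphism $g$ is a weak equivalence for $\alpha$ (resp. for $f\alpha$) if and only if $\Omega_\alpha g$ (resp. $\Omega_{f\alpha} g$) is a quasi-isomorphism of $\P$-algebras (resp. of $\P'$-algebras). The key observation is that the cobar constructions are compatible with the base change along $f$. Concretely, for every conilpotent $\C$-coalgebra $C$ the algebra $\Omega_\alpha C$ is free as a $\P$-algebra on the underlying reduced space of $C$, with a differential built from the internal differentials together with a twisting term using $\alpha$; postcomposing the twisting term with $f$ produces a natural chain map $\Phi_C \colon \Omega_\alpha C \to \Omega_{f\alpha} C$ (equivalently, $\Omega_{f\alpha} C$ is the extension of scalars $f_!\,\Omega_\alpha C$). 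I would then prove that $\Phi_C$ is a quasi-isomorphism for every $C$. Granting this, naturality of $\Phi$ gives, for any morphism $g\colon C\to D$, a commuting square with horizontal maps $\Phi_C,\Phi_D$ and vertical maps $\Omega_\alpha g$ and $\Omega_{f\alpha} g$; since the horizontal maps are quasi-isomorphisms, the $2$-out-of-$3$ property shows that $\Omega_\alpha g$ is a quasi-isomorphism if and only if $\Omega_{f\alpha} g$ is. Hence the two classes of weak equivalences agree, and together with the coincidence of cofibrations this proves that the two model structures are equal.

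The main obstacle is the claim that $\Phi_C$ is a quasi-isomorphism, and this is where both the characteristic zero hypothesis and the assumption that $f$ is a quasi-isomorphism enter. I would argue by filtering: the coradical filtration of $C$ induces an exhaustive filtration on $\Omega_\alpha C$ and on $\Omega_{f\alpha} C$ which is respected by $\Phi_C$ and with respect to which the twisting part of the cobar differential strictly drops filtration degree. On the associated graded the differential therefore reduces to the free part, and $\Phi_C$ induces the map $\P(V)\to\P'(V)$ on free algebras determined by $f$, where $V$ denotes the associated graded of $C$. In characteristic zero this map is a quasi-isomorphism: taking $\S_n$-coinvariants is exact by Maschke's theorem and the tensor product over a field is exact, so the Künneth formula gives $H_\bullet(\P(V))\cong H_\bullet(\P)(H_\bullet V)$ and likewise for $\P'$, whence $f$ being a quasi-isomorphism of operads makes the induced map an isomorphism on homology. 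Since conilpotence guarantees that the associated spectral sequences converge, the standard comparison theorem for filtered complexes upgrades this to the statement that $\Phi_C$ itself is a quasi-isomorphism. The only delicate points to verify carefully are the convergence and exhaustiveness of these filtrations and the precise behaviour of the twisting differential on the associated graded; everything else is formal.
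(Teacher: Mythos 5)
The paper itself contains no proof of this proposition: it is imported verbatim from \cite[Prop.~32]{legrignou16}, so there is no internal argument to compare yours against. Taken on its own terms, your proof is correct and is essentially the argument one would expect to find in \emph{loc.\ cit.} The reduction to ``same cofibrations $+$ same weak equivalences $\Rightarrow$ same model structure'' is valid, and the identification $\Omega_{f\alpha}C\cong f_{!}\,\Omega_{\alpha}C$ together with the naturality of $\Phi_C$ and $2$-out-of-$3$ correctly reduces everything to showing $\Phi_C$ is a quasi-isomorphism; your filtration argument for that is exactly the standard one (it is the same mechanism as in the proof of \cref{lemma:OmegaOfFQIisQI}, i.e.\ \cite[Prop.~2.3]{vallette14}): the coradical filtration is exhaustive and bounded below, the twisting part of the cobar differential strictly drops total filtration degree, and on the associated graded the map becomes $\P(\mathrm{gr}\,C)\to\P'(\mathrm{gr}\,C)$, a quasi-isomorphism by Maschke plus K\"unneth in characteristic zero, so the spectral sequence comparison theorem applies. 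Two small points deserve care. First, in Le Grignou's framework the cofibrations are \emph{defined} by pulling back along $\Omega_\alpha$ (see the remark preceding \cref{prop:compareModelStr}), so the statement that they are precisely the monomorphisms, independently of the twisting morphism, is itself a theorem of \cite{legrignou16} rather than a definition; you should invoke it as such rather than treat it as given. Second, $\Phi_C$ is a map from a $\P$-algebra to a $\P'$-algebra, so it should be read as a morphism of complexes (or of $\P$-algebras after restriction along $f$); this is harmless since only quasi-isomorphisms of underlying complexes are at stake, but it is worth saying. With those caveats the argument is complete.
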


\subsection{Case of interest: conilpotent Lie coalgebras} \label{subsect:coLieCog}

We are interested in the study of certain dg Lie algebras, namely the ones obtained dualizing conilpotent Lie coalgebras.

\begin{definition}
	A \emph{conilpotent Lie coalgebra} is a conilpotent coalgebra over the cooperad $\colie\coloneqq\lie^\vee$, the linear dual of the operad $\lie$ encoding Lie algebras.
\end{definition}

Notice that the cooperad $\colie$ is Koszul and that its Koszul dual operad is the operad $\susp\otimes\com$. Both are connected, and we endow them with the canonical weight grading. By what exposed above, we obtain a model structure on the category of conilpotent Lie coalgebras by pulling back the Hinich model structure on (suspended) commutative algebras along the Koszul twisting morphism
$$\kappa:\colie\longrightarrow\susp\otimes\com\ .$$
We fix the canonical weight grading on $\colie$. It is given by
\[
\colie^{(w)}\coloneqq\colie(w+1)\ .
\]
Therefore, we can change our conventions a bit and we define a \emph{filtration} on a conilpotent Lie coalgebra $C$ to be a sequence of sub-chain complexes
\[
0\subseteq F_1C\subseteq F_2C\subseteq F_3C\subseteq\cdots\subseteq C
\]
such that
\begin{enumerate}
	\item the filtration is exhaustive: $\colim{n}F_nC\cong C$,
	\item it respects the coproduct, that is
	\[
	\overline{\Delta}_C(F_nC)\subseteq\bigoplus_{\substack{k\ge1\\n_1+\cdots+n_k = n}}\big(\colie(k)\otimes F_{n_1}C\otimes\cdots\otimes F_{n_k}C\big)^{\S_k},
	\]
	and
	\item it is preserved by the differential: $d_C(F_nC)\subseteq F_nC$.
\end{enumerate}
Notice that this corresponds to the notion of filtration of \cref{def:filtrationOfCCog} by a shift of index by $1$.

\medskip

Explicitly, the model structure on the category $\mathsf{conil.\ coLie}$ of conilpotent Lie coalgebras has
\begin{itemize}
	\item the closure of the class of filtered quasi-isomorphisms under the $2$-out-of-$3$ property as weak equivalences,
	\item the monomorphisms as cofibrations, and
	\item the class of morphisms with the right lifting property with respect to trivial cofibrations as fibrations.
\end{itemize}
Every conilpotent Lie coalgebra is cofibrant, and the fibrant objects are the quasi-free conilpotent Lie coalgebras.

\medskip

Dualizing a filtered conilpotent Lie coalgebra we obtain a complete Lie algebra.

\begin{lemma}
	Let $(C,F_\bullet C)$ be a filtered conilpotent Lie coalgebra. Then the filtration on $\g\coloneqq C^\vee$ defined by
	\[
	F_n\g\coloneqq(F_nC)^\perp
	\]
	makes $\g$ into a complete Lie algebra. Moreover, the dual of a filtered quasi-isomorphism of conilpotent Lie coalgebras is a filtered quasi-isomorphism of Lie algebras.
\end{lemma}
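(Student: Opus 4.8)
The plan is to verify directly that the annihilator filtration satisfies the three axioms of a complete filtration and is complete, and then to dualize the filtered-quasi-isomorphism data. Throughout I use that linear duality $(-)^\vee=\hom(-,\k)$ is exact over the field $\k$, so that it sends short exact sequences to short exact sequences, the colimit of the increasing system $F_\bullet C$ to the limit of the dual tower, and quasi-isomorphisms to quasi-isomorphisms. Recall that the Lie bracket on $\g=C^\vee$ is the transpose of the binary part of the cobracket $\overline\Delta_C$, and that since the $F_nC$ increase the annihilators $F_n\g=(F_nC)^\perp$ decrease, with $F_0\g=(F_0C)^\perp=\g$ once we set $F_0C:=0$ in accordance with the index convention of \cref{subsect:coLieCog}.

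First I would check the differential and bracket compatibility. For the differential, if $f\in(F_nC)^\perp$ and $c\in F_nC$ then $(d_\g f)(c)=\pm f(d_Cc)=0$ because $d_C(F_nC)\subseteq F_nC$, whence $d_\g(F_n\g)\subseteq F_n\g$. For the bracket, given $f\in F_i\g$, $g\in F_j\g$ and $c\in F_{i+j}C$, I expand the binary part of $\overline\Delta_C(c)$ using the coproduct axiom of the filtration: it lies in $\bigoplus_{n_1+n_2=i+j}\big(\colie(2)\otimes F_{n_1}C\otimes F_{n_2}C\big)^{\S_2}$. The key elementary observation is that $n_1+n_2=i+j$ forces $n_1\le i$ or $n_2\le j$; in the first case $f$ annihilates the first tensor factor, in the second $g$ annihilates the second, so $f\otimes g$ kills every summand. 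Hence $[f,g]$ vanishes on $F_{i+j}C$, i.e. $[F_i\g,F_j\g]\subseteq F_{i+j}\g$. Completeness then follows from the identification $\g/F_n\g=C^\vee/(F_nC)^\perp\cong(F_nC)^\vee$, under which the tower of quotients is the linear dual of the system $F_\bullet C$; since $\colim{n}F_nC\cong C$ and duality turns this colimit into a limit, $\lim_n\g/F_n\g\cong(\colim{n}F_nC)^\vee\cong C^\vee=\g$.

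For the \emph{moreover}, let $\phi:C_1\to C_2$ be a filtered quasi-isomorphism and write $\g_i=C_i^\vee$. Filteredness of $\phi^\vee:\g_2\to\g_1$ is immediate: if $f$ annihilates $F_nC_2$ then $\phi^\vee(f)=f\circ\phi$ annihilates $F_nC_1$, because $\phi(F_nC_1)\subseteq F_nC_2$. That $\phi^\vee$ is a quasi-isomorphism follows from exactness of duality. It remains to show that the induced maps on quotients $\g_2/F_n\g_2\to\g_1/F_n\g_1$ are quasi-isomorphisms; under the identification above these are the duals of the restrictions $\phi|_{F_nC_1}:F_nC_1\to F_nC_2$, so by exactness of duality it suffices to prove that each such restriction is a quasi-isomorphism. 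This is the one genuinely nontrivial point, since the hypothesis supplies quasi-isomorphisms only on the \emph{associated graded} pieces $F_nC_1/F_{n-1}C_1\to F_nC_2/F_{n-1}C_2$. I would upgrade these to the filtration pieces by induction on $n$, applying the five lemma (equivalently, the long exact homology sequence together with the $2$-out-of-$3$ property) to the morphism of short exact sequences $0\to F_{n-1}C_i\to F_nC_i\to F_nC_i/F_{n-1}C_i\to0$ induced by $\phi$, the induction starting from $F_0C_i=0$. I expect this passage from graded to filtered quasi-isomorphisms to be the main, though standard, obstacle; everything else is formal linear and homological algebra.
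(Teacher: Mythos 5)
Your proof is correct. Note that the paper states this lemma without giving any proof at all, so there is nothing to compare against; your argument supplies exactly what is missing. All the steps check out: the bracket compatibility via the pigeonhole observation that $n_1+n_2=i+j$ forces $n_1\le i$ or $n_2\le j$, the identification $\g/F_n\g\cong(F_nC)^\vee$ turning the exhaustive colimit into the completeness limit, and --- the one genuinely non-formal point --- the upgrade from the associated-graded quasi-isomorphisms in the coalgebra definition to quasi-isomorphisms on the filtration pieces $F_nC_1\to F_nC_2$ by induction with the five lemma, which is needed precisely because the Lie-algebra-side definition of filtered quasi-isomorphism is phrased in terms of the quotients $\g/F_n\g$ rather than the associated graded. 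You correctly flag and resolve that mismatch of conventions, which is the only place a careless reader could go wrong.
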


\begin{remark}\label{rem:not all filtered qi are duals}
	Not all filtered quasi-isomorphisms of Lie algebras can be obtained by linear dualization. For example, consider abelian (co)Lie (co)algebras, i.e. cochain complexes. For simplicity, take $\k=\mathbb{Q}$. Let
	\[
	V\coloneqq\bigoplus_{n\in\mathbb{Z}}\mathbb{Q}
	\]
	be concentrated in degree $0$, seen as an abelian Lie coalgebra. A quasi-isomorphism from $V$ to itself is just an isomorphism, and we have
	\[
	|\operatorname{Aut}(V)|\le|\operatorname{End}(V)|=(\dim V)^{|V|} = |\mathbb{N}|^{|\mathbb{N}|}.
	\]
	At the same time, we have $\dim(V^\vee) = |\mathbb{R}|$. Notice that if we fix a basis of $V^\vee$, then bijections from the basis to itself are automorphisms. Therefore, we have
	\[
	|\operatorname{Aut}(V^\vee)|\ge|\operatorname{Sym}(\mathbb{R})|=|\mathbb{R}|^{|\mathbb{R}|}>|\mathbb{N}|^{|\mathbb{N}|}.
	\]
	It follows that there must exist automorphisms of $V^\vee$ that are not given as the dual of an automorphism of $V$.
\end{remark}

There is another twisting morphism we can consider to construct a model structure on conilpotent Lie coalgebras, namely the canonical twisting morphism
\begin{equation} \label{eq:iota}
	\iota:\colie\cong\susp^c\otimes\com^{\antishriek}\longrightarrow\Omega(\susp^c\otimes\com^{\antishriek})\cong\susp\otimes\C_\infty\ .
\end{equation}

\begin{lemma} \label{lemma:modelStrKappaIota}
	The model structure obtained on conilpotent Lie coalgebras by pulling back the Hinich model structure on suspended $\C_\infty$-algebras along $\Omega_\iota$ is equal to the one obtained by pulling back along $\Omega_\kappa$.
\end{lemma}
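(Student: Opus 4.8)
The plan is to deduce this directly from the comparison result \cref{prop:compareModelStr}. Both model structures live on the category of conilpotent Lie coalgebras: the first is induced by the twisting morphism $\kappa\colon\colie\to\susp\otimes\com$ and the second by $\iota\colon\colie\to\susp\otimes\C_\infty$. To place them in the setting of \cref{prop:compareModelStr}, I would exhibit a quasi-isomorphism of operads $f\colon\susp\otimes\C_\infty\to\susp\otimes\com$ through which $\iota$ factors, meaning $\kappa=f\iota$.

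First I would recall that $\C_\infty=\Omega(\com^{\antishriek})$ is the cobar construction of the Koszul dual cooperad of $\com$, and that the operad $\com$ is Koszul. Koszulness of $\com$ is precisely the statement that the canonical projection $p\colon\Omega(\com^{\antishriek})\to\com$ --- the counit of the operadic bar-cobar adjunction --- is a quasi-isomorphism. Since tensoring an operad with the suspension operad $\susp$ only shifts degrees and twists signs, and in particular preserves quasi-isomorphisms, the morphism
\[
f\coloneqq\id_\susp\otimes p\colon\susp\otimes\C_\infty\longrightarrow\susp\otimes\com
\]
is a quasi-isomorphism of operads.

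Next I would verify the factorization $f\iota=\kappa$. Under the identification $\colie\cong\susp^c\otimes\com^{\antishriek}$, the morphism $\iota$ of \eqref{eq:iota} is (the suspension of) the universal twisting morphism $\com^{\antishriek}\to\Omega(\com^{\antishriek})$, while $\kappa$ is (the suspension of) the Koszul twisting morphism $\com^{\antishriek}\to\com$. The defining property of the universal twisting morphism in operadic Koszul duality is exactly that composing it with the canonical projection $p$ recovers the Koszul twisting morphism; tensoring this identity with $\susp$ gives $f\iota=\kappa$. I expect the only delicate point to be the bookkeeping of the suspension identifications here: one must check that the isomorphisms $\colie\cong\susp^c\otimes\com^{\antishriek}$ and $\susp\otimes\C_\infty\cong\Omega(\susp^c\otimes\com^{\antishriek})$ appearing in \eqref{eq:iota} are compatible with the projection, so that $\id_\susp\otimes p$ really corresponds to the canonical projection $\Omega(\susp^c\otimes\com^{\antishriek})\to\susp\otimes\com$.

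With the data $\C=\colie$, $\alpha=\iota$, the quasi-isomorphism $f$ as above, and $f\alpha=\kappa$ in place, \cref{prop:compareModelStr} immediately yields that the model structure on conilpotent Lie coalgebras induced by $\iota$ agrees with the one induced by $\kappa$, which is exactly the desired conclusion. Everything beyond the identification $f\iota=\kappa$ is a formal consequence of the proposition.
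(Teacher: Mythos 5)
Your proof is correct and follows exactly the route the paper takes: the paper's proof is the single sentence that the lemma is an immediate corollary of \cref{prop:compareModelStr}, with the data $\alpha=\iota$, $f$ the (suspension of the) counit $\Omega(\com^{\antishriek})\to\com$, and $f\iota=\kappa$ left implicit. You have simply spelled out those details, including the correct identification of $f$ as a quasi-isomorphism via the Koszulness of $\com$.
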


\begin{proof}
	This is an immediate corollary of \cref{prop:compareModelStr}.
\end{proof}

This fact will be of fundamental importance later on.

\section{A model structure for the Goldman--Millson theorem} \label{sect:GMThm}

In this section, we show that a version of the Goldman--Millson theorem \cite{goldman88} can be obtained by a purely homotopical argument in the model category of Lie coalgebras introduced above.

\subsection{Dualization of conilpotent Lie coalgebras}

We start by giving the correct framework in which to dualize conilpotent Lie coalgebras.

\medskip

In the article \cite{legrignou16}, the following result is proven, generalizing what can be found in \cite[Sect. 1]{getzler99} for coassociative counital coalgebras. See also \cref{ex:GG}.

\begin{proposition}[{\cite[Lemma 4 and 5 and Prop. 12]{legrignou16}}]
	Let $\C$ be a cooperad. The category $\mathsf{conil.\ }\C\text{-}\mathsf{cog}$ of conilpotent $\C$-coalgebras is cocomplete, every conilpotent $\C$-coalgebra is the colimit of the diagram of all its finite-dimensional sub-coalgebras with the relative inclusions, and all finite-dimensional conilpotent $\C$-coalgebras are compact objects in $\mathsf{conil.\ }\C\text{-}\mathsf{cog}$. In particular, the category $\mathsf{conil.\ }\C\text{-}\mathsf{cog}$ is presentable.
\end{proposition}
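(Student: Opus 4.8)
The plan is to verify each of the three asserted properties in turn, since together with the structural results recalled earlier they give everything claimed. The backbone of the argument is \cref{prop:CEquivToIndC} applied to the cooperadic setting, exactly as in \cref{ex:GG} for coassociative coalgebras; the statement we want is essentially the assertion that its three hypotheses hold for $\cat\coloneqq\mathsf{conil.\ }\C\text{-}\mathsf{cog}$ and $\cat'$ the full subcategory of finite-dimensional conilpotent $\C$-coalgebras. So concretely I would prove cocompleteness, the fact that every conilpotent $\C$-coalgebra is the filtered colimit of its finite-dimensional subcoalgebras, and compactness of the finite-dimensional objects, and then read off presentability.

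First I would establish cocompleteness. The forgetful functor from $\mathsf{conil.\ }\C\text{-}\mathsf{cog}$ to graded vector spaces (or chain complexes) creates colimits once one knows they exist, so the real content is the construction of coproducts and coequalizers. Coproducts are computed on underlying complexes with the evident codiagonal-free coalgebra structure; coequalizers require a little more care because quotients of coalgebras are delicate, but in the conilpotent setting over a cooperad one builds the coequalizer as the largest conilpotent subcoalgebra contained in the naive quotient, or equivalently uses the cofree--conilpotent functor as a right adjoint to transport the construction. This is the routine but slightly technical half of the argument, and I would lean on the cited results of Le Grignou rather than redo it.

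Second, for the colimit-of-subcoalgebras claim, the key point is the fundamental theorem of conilpotent coalgebras: every element $c$ of a conilpotent $\C$-coalgebra $C$ generates, under iterated application of the (co)reduced decomposition map $\overline{\Delta}_C$, a finite-dimensional subcoalgebra, precisely because conilpotency means the decomposition filtration of each element terminates. The poset of finite-dimensional subcoalgebras is then filtered (a finite set of elements lies in the subcoalgebra generated by their union, still finite-dimensional), and $C$ is visibly the union, hence the filtered colimit, of these. This supplies the functor $\delta$ of hypothesis~\eqref{it:delta}. Third, compactness of a finite-dimensional conilpotent $\C$-coalgebra $D$ amounts to showing that any morphism $D\to\colim{i}C_i$ into a filtered colimit factors through some $C_i$; since $D$ is finite-dimensional and the colimit is computed on underlying spaces, the images of a finite basis of $D$ already land in a single $C_i$, and one checks the resulting map of underlying spaces is a coalgebra morphism. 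With all three hypotheses verified, \cref{prop:CEquivToIndC} yields the equivalence $\mathsf{conil.\ }\C\text{-}\mathsf{cog}\simeq\ind{\cat'}$, and presentability follows because $\cat'$ is essentially small (finite-dimensional objects form a set up to isomorphism) and its objects are a set of compact generators in a cocomplete category.

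The main obstacle I expect is the second and third points interacting with the \emph{differential}: one must be sure that the finite-dimensional subcoalgebras generated by elements are sub-\emph{chain}-complexes, i.e. stable under the differential, which requires the differential to be compatible with $\overline{\Delta}_C$ so that $d_C$ does not push an element outside the subcoalgebra it generates. This compatibility holds because $d_C$ is a coderivation, but it is the delicate point that must be checked rather than assumed, and it is precisely the analogue of the subtlety that makes coequalizers in step one nontrivial. Everything else is formal once this stability is in hand, and since the statement is explicitly attributed to \cite{legrignou16}, I would present the proof as an application of those lemmas together with \cref{prop:CEquivToIndC}, noting that it is the conilpotent-coalgebra generalization of \cref{ex:GG}.
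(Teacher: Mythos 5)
Your proposal is essentially consistent with the paper, which offers no proof of this proposition at all: it is imported verbatim from Le Grignou, and the paper's only ``argument'' is the citation. Your reconstruction of the underlying mathematics --- cocompleteness, the fundamental-theorem-style argument that every element generates a finite-dimensional subcoalgebra because conilpotency forces the iterated reduced decomposition to terminate, filteredness of the poset of finite-dimensional subcoalgebras, compactness by factoring the image of a finite basis through a single stage of a filtered colimit, and presentability from an essentially small family of compact generators --- is the standard and correct route, and you are right to flag stability under the differential as the point requiring care (it is handled by throwing $d_C(c)$ into the generating set and using that $d_C$ is a coderivation, so the generated subobject is a finite-dimensional subcomplex as well as a subcoalgebra). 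One small correction: for coalgebras over a cooperad, colimits are the \emph{easy} direction --- they are created by the forgetful functor to chain complexes, since the decomposition map lands in a sum of coinvariants of tensor powers and these commute with colimits --- so the coequalizer is just the naive quotient (which remains conilpotent); the ``largest subcoalgebra'' device you describe is what one needs for \emph{limits} (products and equalizers), not colimits. This slip does not affect your argument, since you defer that step to the cited lemmas anyway, and your conclusion via \cref{prop:CEquivToIndC}, modelled on \cref{ex:GG}, is exactly how the paper goes on to use the proposition.
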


Denote by $\mathsf{fd\ conil.\ coLie}$ the full subcategory of $\mathsf{conil.\ coLie}$ of finite dimensional conilpotent Lie coalgebras. As a direct consequence of these facts and of \cref{prop:CEquivToIndC} in the case where $\cat = \mathsf{conil.\ coLie}$, we have the following.

\begin{corollary} \label{cor:equivCats}
	The categories $\mathsf{conil.\ coLie}$ and $\ind{\mathsf{fd\ conil.\ coLie}}$ are equivalent. The equivalence is given by the functors
	$$\colim{}:\ind{\mathsf{fd\ conil.\ coLie}}\longrightarrow\mathsf{conil.\ coLie}$$
	given by taking a functorial choice of colimit of diagrams and
	$$\delta:\mathsf{conil.\ coLie}\longrightarrow\ind{\mathsf{fd\ conil.\ coLie}}$$
	given by associating to a conilpotent Lie coalgebra the diagram of all its finite-dimensional sub-coalgebras with the relative inclusions.
\end{corollary}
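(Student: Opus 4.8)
The plan is to apply \cref{prop:CEquivToIndC} directly, taking $\cat\coloneqq\mathsf{conil.\ coLie}$ and the full subcategory $\cat'\coloneqq\mathsf{fd\ conil.\ coLie}$ (that is, the quoted proposition of \cite{legrignou16} specialized to the cooperad $\colie$). The point is that this proposition supplies all three hypotheses of \cref{prop:CEquivToIndC} almost verbatim, so the work reduces to matching them up and checking that the candidate functor $\delta$ is genuinely valued in $\ind{\mathsf{fd\ conil.\ coLie}}$.

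First I would record the three inputs. Hypothesis \eqref{it:CCocomplete}, cocompleteness of $\cat$, is the first assertion of the quoted proposition. Hypothesis \eqref{it:compactObj}, compactness of every object of $\cat'$ inside $\cat$, is its last assertion. For hypothesis \eqref{it:delta} I would define $\delta$ on objects by sending a conilpotent Lie coalgebra $C$ to the diagram of all its finite-dimensional sub-coalgebras together with the inclusions among them, and on morphisms by observing that a map $f\colon C\to D$ carries each finite-dimensional sub-coalgebra of $C$ into a finite-dimensional sub-coalgebra of $D$, and hence induces a morphism of the associated diagrams in the sense of $\ind{\cat'}$. The quoted proposition asserts precisely that $C$ is the colimit of this diagram, which is exactly the natural isomorphism $\colim{}\,\delta\cong\id_\cat$ demanded by \eqref{it:delta}.

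The one point that is not a pure quotation, and the step I expect to be the main (though mild) obstacle, is verifying that $\delta(C)$ really is an object of $\ind{\cat'}$: its indexing poset of finite-dimensional sub-coalgebras, ordered by inclusion, must be filtered. As this poset is in particular a poset, the second filteredness axiom holds automatically, and it suffices to produce, for any two finite-dimensional sub-coalgebras $C_1,C_2\subseteq C$, a single finite-dimensional sub-coalgebra containing both. Here I would invoke conilpotence: the sub-coalgebra of $C$ generated by the finite-dimensional subspace $C_1+C_2$ is again finite-dimensional --- the coalgebra analogue of the fundamental theorem of coalgebras --- and it visibly contains $C_1$ and $C_2$. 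This furnishes the required upper bound, so the diagram is filtered and $\delta$ is well-defined and functorial.

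With the three hypotheses in hand, \cref{prop:CEquivToIndC} yields that $\delta$ and $\colim{}$ exhibit the asserted equivalence between $\mathsf{conil.\ coLie}$ and $\ind{\mathsf{fd\ conil.\ coLie}}$, and by construction these are exactly the two functors named in the statement.
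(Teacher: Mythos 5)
Your proposal is correct and follows exactly the route the paper takes: the corollary is stated there as a direct consequence of the quoted Le Grignou proposition together with \cref{prop:CEquivToIndC}, with the three hypotheses matched up just as you describe. Your additional verification that the poset of finite-dimensional sub-coalgebras is filtered is a detail the paper leaves implicit, and it is handled correctly.
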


It is always true that the linear dual of a conilpotent Lie coalgebra is a Lie algebra, while the converse holds in the finite dimensional case. Denote by $\mathsf{fd\ nil.\ Lie}$ the category of finite dimensional nilpotent Lie algebras. As a direct consequence of this fact and of \cref{lemma:antiEquivIndPro}, we have the following.

\begin{lemma}
	Linear duality between finite dimensional Lie coalgebras and finite dimensional Lie algebras induces an anti-equivalence of categories between $\ind{\mathsf{fd\ conil.\ coLie}}$ and $\pro{\mathsf{fd\ nil.\ Lie}}$.
\end{lemma}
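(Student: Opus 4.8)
The plan is to deduce the statement formally from \cref{lemma:antiEquivIndPro} once we know that linear duality already gives an anti-equivalence of categories at the finite-dimensional level,
\[
(-)^\vee : \mathsf{fd\ conil.\ coLie} \longrightarrow \mathsf{fd\ nil.\ Lie}\ .
\]
Indeed, \cref{lemma:antiEquivIndPro} applied with $\cat'=\mathsf{fd\ conil.\ coLie}$ and $\cat''=\mathsf{fd\ nil.\ Lie}$, which are anti-equivalent, immediately upgrades this to an anti-equivalence between $\ind{\mathsf{fd\ conil.\ coLie}}$ and $\pro{\mathsf{fd\ nil.\ Lie}}$. So essentially all the content lies in establishing the finite-dimensional anti-equivalence, and the rest is a citation.

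To set up that anti-equivalence, I would start from the fact that on finite-dimensional vector spaces the functor $V\mapsto V^\vee=\hom(V,\k)$ is an anti-equivalence of $\fdvect$ with itself, the double-dual natural isomorphism $V\cong V^{\vee\vee}$ serving as the coherence data. Since the operad $\lie$ and the cooperad $\colie=\lie^\vee$ are mutually linear-dual, this anti-equivalence transports the coalgebra structure on a finite-dimensional Lie coalgebra $(C,\Delta_C)$ to an algebra structure on $C^\vee$: the cobracket dualizes to a bracket $[-,-]\colon C^\vee\otimes C^\vee\to C^\vee$, and co-antisymmetry together with the co-Jacobi identity dualize verbatim into antisymmetry and the Jacobi identity. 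The functor is manifestly contravariant, and finite-dimensionality makes it fully faithful: it is injective on hom-sets, while any Lie algebra map $C_2^\vee\to C_1^\vee$ is, via the canonical isomorphisms $C_i^{\vee\vee}\cong C_i$, the dual of a coalgebra map $C_1\to C_2$.

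The substantive step—the one I expect to require the most care—is matching conilpotency on the coalgebra side with nilpotency on the algebra side. Given a finite-dimensional conilpotent Lie coalgebra $C$, its coradical filtration $F^{\mathrm{corad.}}_\bullet C$ is exhaustive by conilpotency, and since $\dim C<\infty$ it must stabilize at $C$ after finitely many steps. Taking annihilators $F_n(C^\vee)\coloneqq(F_n^{\mathrm{corad.}}C)^\perp$ then yields a decreasing filtration of $C^\vee$ satisfying $[F_i,F_j]\subseteq F_{i+j}$ and terminating at $0$, which is exactly the assertion that $C^\vee$ is nilpotent (this is the finite-dimensional refinement of the completeness statement recorded earlier for duals of filtered conilpotent Lie coalgebras, with the index shift of \cref{subsect:coLieCog}). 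Running the identical argument for $\g\mapsto\g^\vee$ shows that the dual cobracket makes $\g^\vee$ conilpotent whenever $\g$ is nilpotent, which gives essential surjectivity and confirms that $(-)^\vee$ restricts to an anti-equivalence between the two finite-dimensional categories. With this in hand, invoking \cref{lemma:antiEquivIndPro} closes the proof; the only genuine obstacle is the conilpotent/nilpotent dictionary, everything else being a formal consequence of finite-dimensional linear duality.
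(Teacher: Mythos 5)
Your proposal is correct and follows the same route as the paper, which likewise treats the lemma as a direct consequence of the finite-dimensional anti-equivalence given by linear duality (where conilpotent Lie coalgebras correspond to nilpotent Lie algebras) combined with \cref{lemma:antiEquivIndPro}. The paper leaves the finite-dimensional dictionary implicit, so your elaboration of the coradical-filtration/lower-central-series correspondence is just a more detailed write-up of the same argument.
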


In summary, we have the following commutative diagram, where $\mathsf{pronil.\ Lie}$ is the subcategory of Lie algebras which is the image of linear dualization from conilpotent Lie coalgebras, called the category of \emph{pronilpotent Lie algebras}.
\begin{center}
	\begin{tikzpicture}
		\node (a) at (0,0) {$\ind{\mathsf{fd\ conil.\ coLie}}$};
		\node (b) at (5,0) {$\pro{\mathsf{fd\ nil.\ Lie}}$};
		\node (c) at (0,-2) {$\mathsf{conil.\ coLie}$};
		\node (d) at (5,-2) {$\mathsf{pronil.\ Lie}$};
		
		\draw[->] (c) -- (d);
		\draw[->] (a) -- node[right]{$\colim{}$} (c);
		\draw[->] (b) -- node[right]{$\lim$} (d);
		\draw[->] (b) -- (a);
	\end{tikzpicture}
\end{center}
where the horizontal arrows are given by linear dualization and the upper horizontal and left vertical arrows are an anti-equivalence of categories and an equivalence of categories respectively. Notice that all pronilpotent Lie algebras are complete: one can always write them as the dual of a conilpotent Lie coalgebras an take the orthogonal of the coradical filtration.

\begin{remark}
	What stated above works in greater generality for conilpotent coalgebras over a cooperad. In a sense, this is the correct framework in which to dualize a conilpotent coalgebra, obtaining not an algebra, but a diagram of finite dimensional algebras.
\end{remark}

We endow the category $\pro{\mathsf{fd\ nil.\ Lie}}$ with the model structure obtained by transporting the Vallette model structure described in \cref{thm:ValletteModelStr} along the two (anti-)equivalences of categories using \cref{prop:modelStrTransportedViaEquiv}. The two model structures are Quillen equivalent. Notice that fibrations and cofibrations change roles as we pass through an anti-equivalence.

\begin{remark}
	One cna check that the model structure thus obtained is exactly the one of \cite[Def. 9.9]{lazarev13}. The main difference between our approach and theirs is the fact that we work directly with Lie coalgebras, while they only ever work with pronilpotent Lie algebras. We hope that the point of view we used in the present paper will appear more natural to some readers, as it avoids dualizing every coalgebra in order to only work with algebras.
\end{remark}

\subsection{The cosimplicial Lie algebra $\mc_\bullet$} \label{subsect:mcbullet}

Before going on, we need to introduce a particular cosimplicial Lie algebra playing a very central role in deformation theory and rational homotopy theory. It was discovered and studied in \cite{bandiera}, \cite{buijs15}, \cite{rn17cosimplicial}, and \cite{buijs17}.

\medskip

Recall the Sullivan algebra $\Omega_\bullet$ described in \cref{subsect:SullivanAlg}. The cocellular complex $C_n$ of the $n$-simplex can be given by a natural sub-cochain complex of $\Omega_n$. Namely, it is given by
\[
C_n\coloneqq\mathrm{span}_\k\big\{\omega_I\mid I\subseteq\{0,1,\ldots,n\},\ I\neq\emptyset\big\}
\]
with
\[
\omega_I\coloneqq k!\sum_{j=0}^k(-1)^jt_jdt_{i_0}\cdots\widehat{dt_{i_j}}\cdots dt_{i_k}
\]
for $I=\{i_0<i_1<\cdots<i_k\}$. It forms a simplicial cochain complex by restriction of the simplicial maps of $\Omega_\bullet$. For his proof of the de Rham theorem for PL differential forms, Dupont \cite{dupont76} introduced a simplicial contraction
\begin{center}
	\begin{tikzpicture}
		\node (a) at (0,0){$\Omega_\bullet$};
		\node (b) at (2,0){$C_\bullet$};
		
		\draw[->] (a)++(.3,.1)--node[above]{\mbox{\tiny{$p_\bullet$}}}+(1.4,0);
		\draw[<-,yshift=-1mm] (a)++(.3,-.1)--node[below]{\mbox{\tiny{$i_\bullet$}}}+(1.4,0);
		\draw[->] (a) to [out=-150,in=150,looseness=4] node[left]{\mbox{\tiny{$h_\bullet$}}} (a);
	\end{tikzpicture}
\end{center}
from $\Omega_\bullet$ to $C_\bullet$. Thanks to this, the Homotopy Transfer Theorem (see e.g. \cite[Sect. 10.3]{vallette12} for a standard reference) gives us a good way to endow $C_\bullet$ with a simplicial $\C_\infty$-algebra structure. Since $\C_\infty$ is finite dimensional in every arity, its dual is a cooperad, which is in fact isomorphic to $\susp^{-1}\otimes\bar\lie$. Since $C_n$ is finite dimensional for every $n$, we can dualize it to obtain a (not conilpotent) cosimplicial $\C_\infty^\vee$-coalgebra $C_\bullet^\vee$. Its suspension $sC_\bullet^\vee$ is then a $\bar\lie$-coalgebra, so that we can take its complete cobar construction relative to the canonical twisting morphism
\[
\pi:\bar\lie\longrightarrow\lie
\]
to obtain a cosimplicial Lie algebra
\[
\mc_\bullet\coloneqq\widehat{\Omega}_\pi(sC_\bullet^\vee)\ .
\]
The underlying graded Lie algebra of $\mc_\bullet$ is simply the free complete Lie algebra
\[
\widehat{\lie}(sC_\bullet^\vee)\coloneqq\prod_{n\ge1}\lie(n)\otimes_{\S_n}(sC_\bullet^\vee)^{\otimes n}.
\]
Notice that
\[
\mc_\bullet = \bar_\iota(s^{-1}C_\bullet)^\vee\ ,
\]
where $\iota$ is the canonical twisting morphism of (\ref{eq:iota}). Details on this construction can be found in \cite{rn17tensor} and \cite{rn17cosimplicial}. From the latter article we also recall the following two results. The first tells us that the cosimplicial Lie algebra $\mc_\bullet$ represents the Deligne--Hinich--Getzler $\infty$-groupoid of \cref{def:DHGgpd}. The second one explicitly describes $\mc_0$ and $\mc_1$.

\begin{theorem}[{\cite[Cor.5.3]{rn17cosimplicial}}] \label{thm:equivDHGgpd}
	Let $\g$ be a complete Lie algebra. There is a homotopy equivalence
	\[
	\MC_\bullet(\g)\simeq\hom_{\dgl}(\mc_\bullet,\g)
	\]
	which is natural in $\g$.
\end{theorem}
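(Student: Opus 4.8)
The plan is to prove the statement in three movements: first turn the right-hand side into a Maurer--Cartan simplicial set by means of the cobar adjunction, then compare the two resulting Maurer--Cartan simplicial sets through the Dupont contraction and the Homotopy Transfer Theorem, and finally upgrade the resulting $\infty$-quasi-isomorphism to a homotopy equivalence of simplicial sets. Throughout, the completeness of $\g$ is what makes the completed tensor products, convolution algebras, and Maurer--Cartan sets well defined and convergent, and all constructions are manifestly functorial in $\g$, so naturality comes for free.

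First I would rewrite $\hom_{\dgl}(\mc_\bullet,\g)$. Since $\mc_n=\widehat\Omega_\pi(sC_n^\vee)$, the defining adjunction of the complete cobar construction relative to $\pi\colon\bar\lie\to\lie$ identifies, naturally in $\g$,
\[
\hom_{\dgl}(\mc_n,\g)=\hom_{\dgl}\big(\widehat\Omega_\pi(sC_n^\vee),\g\big)\cong\MC\big(\widehat{\hom}^{\,\pi}(sC_n^\vee,\g)\big),
\]
the right-hand side being the Maurer--Cartan set of the complete convolution algebra. The next step is to identify this convolution algebra: because $C_n$ is finite dimensional and carries the HTT-transferred $\C_\infty$-structure, $\widehat{\hom}^{\,\pi}(sC_n^\vee,\g)$ is isomorphic, up to the relevant suspension bookkeeping, to $\g\,\widehat{\otimes}\,C_n$ equipped with the $L_\infty$-structure obtained by tensoring the $\C_\infty$-structure on $C_n$ with the Lie bracket of $\g$. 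As $n$ varies these identifications are compatible with the simplicial maps, which are induced by those of $C_\bullet$, so I obtain an isomorphism of simplicial sets
\[
\hom_{\dgl}(\mc_\bullet,\g)\cong\MC\big(\g\,\widehat{\otimes}\,C_\bullet\big),
\]
where the right-hand side uses the transferred structure, while $\MC_\bullet(\g)=\MC(\g\otimes\Omega_\bullet)$ uses the strict Lie structure coming from the commutative algebra $\Omega_\bullet$.

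Second, I would produce the comparison map. The Dupont data $(i_\bullet,p_\bullet,h_\bullet)$ form a simplicial contraction of $\Omega_\bullet$ onto $C_\bullet$, and the $\C_\infty$-structure on $C_\bullet$ is by construction the one transferred along it. Tensoring with $\g$ and applying the Homotopy Transfer Theorem level by level extends $i_\bullet$ to an $\infty$-quasi-isomorphism from $\g\,\widehat{\otimes}\,C_\bullet$ to $\g\otimes\Omega_\bullet$; since the transfer formulas depend only on the contraction and not on $\g$, this $\infty$-morphism is again simplicial and natural in $\g$, and with respect to the filtrations induced by the completeness filtration of $\g$ it is a filtered $\infty$-quasi-isomorphism. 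Passing to Maurer--Cartan elements yields a simplicial map
\[
\MC\big(\g\,\widehat{\otimes}\,C_\bullet\big)\longrightarrow\MC\big(\g\otimes\Omega_\bullet\big)
\]
between the two simplicial sets, both of which are Kan complexes by \cref{thm:Getzler}.

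The final and hardest step is to show that this simplicial map is a weak homotopy equivalence. This is exactly the homotopical invariance of the Maurer--Cartan $\infty$-groupoid under filtered $\infty$-quasi-isomorphisms, i.e. the $L_\infty$-version of the Dolgushev--Rogers theorem (\cref{thm:DR}). Concretely I would reduce to the nilpotent quotients $\g/F_n\g$, where the transfer formulas converge trivially, and prove the statement there by the usual induction on the nilpotency degree together with Getzler's identification \cite{getzler09} of the homotopy groups $\pi_k$ of a Maurer--Cartan simplicial set based at an element $\tau$ with the cohomology of the corresponding twisted algebra: a filtered $\infty$-quasi-isomorphism induces an isomorphism on each such twisted cohomology, hence on all homotopy groups, so the map is a weak equivalence of Kan complexes. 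Taking the limit over $n$, using that both sides are the limits of their nilpotent truncations and that weak equivalences of Kan complexes are stable under the relevant tower limits, then yields the desired natural homotopy equivalence
\[
\MC_\bullet(\g)\simeq\hom_{\dgl}(\mc_\bullet,\g).
\]
The main obstacle is precisely this invariance step: it is where the genuine homotopy theory enters, and controlling it for a general complete (not merely nilpotent) $\g$ is what forces the passage to the filtration quotients and the limit argument.
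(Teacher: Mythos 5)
First, note that the paper does not prove this statement: it is imported wholesale from \cite[Cor. 5.3]{rn17cosimplicial}, so your proposal has to be measured against the argument given there. Your first step coincides with it exactly: the adjunction for the complete cobar construction identifies $\hom_{\dgl}(\mc_n,\g)$ with the Maurer--Cartan set of the convolution algebra $\widehat{\hom}^{\pi}(sC_n^\vee,\g)\cong\g\,\widehat{\otimes}\,s^{-1}C_n$, and the compatibility of ``transfer then tensor'' with ``tensor then transfer'' (which you assert in passing, and which is the content of \cite{rn17tensor}) identifies this with $\g\otimes C_n$ carrying the structure transferred through the Dupont contraction. Up to that point you are reconstructing the reference faithfully.

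The gap is in your final step. The map $\MC(\g\,\widehat{\otimes}\,C_\bullet)\to\MC(\g\otimes\Omega_\bullet)$ is \emph{not} of the form $\MC_\bullet(f)$ for a single filtered $\infty$-quasi-isomorphism $f$ between two fixed algebras: it is a levelwise map between simplicial sets whose $n$-simplices are Maurer--Cartan elements of a \emph{different} algebra for each $n$. The Dolgushev--Rogers theorem (\cref{thm:DR}), even in its $L_\infty$ form, therefore does not apply to it, and the levelwise maps are certainly not bijections for $n\ge1$. Likewise, \cref{thm:Getzler} gives Kan-ness of $\MC(\g\otimes\Omega_\bullet)$ only; that $\MC(\g\,\widehat{\otimes}\,C_\bullet)$ is a Kan complex with homotopy groups computed by twisted cohomology is precisely the hard content you would need, and your appeal to ``the usual induction on the nilpotency degree'' amounts to redoing it from scratch. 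The cited proof avoids this entirely: by a theorem of Bandiera, the Maurer--Cartan map of the HTT $\infty$-morphism $i_\infty$ is a levelwise \emph{bijection} of $\MC(\g\,\widehat{\otimes}\,C_\bullet)$ onto Getzler's sub-simplicial set $\gamma_\bullet(\g)=\MC_\bullet(\g)\cap\ker h_\bullet$ (this is the isomorphism recorded in the remark following the theorem), and Getzler's theorem that the inclusion $\gamma_\bullet(\g)\hookrightarrow\MC_\bullet(\g)$ is a deformation retract then yields the homotopy equivalence, naturally in $\g$. So the missing idea is the identification of the image of your comparison map with $\gamma_\bullet(\g)$; without it, the last third of your argument does not go through as stated.
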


\begin{remark}
	In fact, by a result of Bandiera \cite[Prop. 2.5]{bandiera17}, the Kan complex $\hom_{\dgl}(\mc_\bullet,\g)$ is isomorphic to the Kan complex $\gamma_\bullet(\g)$ defined by Getzler in \cite{getzler09}.
\end{remark}

\begin{proposition}[{\cite[Prop. 5.4]{rn17cosimplicial}}] \label{prop:mc01}
	For the first two levels of the cosimplicial Lie algebra $\mc_\bullet$ we have that
	\begin{enumerate}
		\item $\mc_0$ is isomorphic to the Lie algebra generated by a single Maurer--Cartan element, and
		\item $\mc_1$ is isomorphic to the Lawrence--Sullivan algebra, that is the Lie algebra generated by two Maurer--Cartan elements in degree $1$ and an element in degree $0$ giving a gauge between them.
	\end{enumerate}
\end{proposition}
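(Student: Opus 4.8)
The plan is to compute $\mc_0$ and $\mc_1$ by hand from the definition $\mc_n=\widehat{\Omega}_\pi(sC_n^\vee)$, exploiting the fact that the cocellular complexes $C_0$ and $C_1$ are tiny. Since $\omega_I$ sits in cohomological degree $|I|-1$, the complex $C_0$ is one-dimensional, spanned by the degree-$0$ form $\omega_{\{0\}}$, whereas $C_1$ is three-dimensional, spanned by the two degree-$0$ forms $\omega_{\{0\}},\omega_{\{1\}}$ and the degree-$1$ form $\omega_{\{0,1\}}$. Dualizing and suspending, $sC_0^\vee$ is one-dimensional in degree $1$, and $sC_1^\vee$ has two generators in degree $1$ (dual to the vertices) together with one generator in degree $0$ (dual to the edge). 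Applying the complete cobar construction, the underlying complete graded Lie algebras are therefore $\mc_0=\widehat{\lie}(a)$, free on a single degree-$1$ generator $a$, and $\mc_1=\widehat{\lie}(a_0,a_1,\lambda)$, free on two degree-$1$ generators $a_0,a_1$ and one degree-$0$ generator $\lambda$. It remains to pin down the two cobar differentials.

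For the first statement I would note that $\Omega_0=\k=C_0$, so Dupont's contraction is the identity at the point and the transferred $\C_\infty$-structure on $C_0$ has no higher operations. The only nonzero part of the cobar differential is then the quadratic term built from the binary cooperation, the twisting morphism $\pi$, and the Lie bracket, and a direct check yields $da=-\tfrac12[a,a]$. This is exactly the relation expressing that $a$ is a universal Maurer--Cartan element, so $\mc_0$ is the Lie algebra generated by a single Maurer--Cartan element. (Alternatively, this is forced by \cref{thm:equivDHGgpd} together with the identity $\MC_0(\g)=\MC(\g)$, which say that $\mc_0$ corepresents the Maurer--Cartan functor.)

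For the second statement the cobar differential on $\mc_1$ splits into an internal piece, dual to the cochain differential of $C_1$, and a piece assembled from the transferred $\C_\infty$-operations $m_k$ composed with $\pi$ and the bracket. The two coface maps $\mc_0\to\mc_1$, induced by the two vertices of the $1$-simplex, are morphisms of dg Lie algebras sending $a$ to $a_0$ and $a_1$ respectively; since $a$ satisfies the Maurer--Cartan equation in $\mc_0$, cosimplicial compatibility forces $da_i=-\tfrac12[a_i,a_i]$, so that $a_0$ and $a_1$ are Maurer--Cartan elements. The remaining content is thus the value of $d\lambda$. Its linear part is $a_1-a_0$, dual to the relations $d\omega_{\{0\}}=-\omega_{\{0,1\}}$ and $d\omega_{\{1\}}=\omega_{\{0,1\}}$ in $C_1$, while the higher transferred operations $m_{n+1}$ with $n$ edge-inputs and one vertex-input, whose output lands on the edge class, contribute iterated brackets of the form $(\ad_\lambda)^n(a_1-a_0)$ together with a $[\lambda,a_1]$ term.

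The main obstacle is to verify that the coefficients of these iterated brackets are exactly the Bernoulli numbers, so that
\[
d\lambda=[\lambda,a_1]+\sum_{n\ge0}\frac{B_n}{n!}(\ad_\lambda)^n(a_1-a_0)\ ,
\]
i.e. the operator $\ad_\lambda/(e^{\ad_\lambda}-1)$ applied to $a_1-a_0$, which is the defining differential of the Lawrence--Sullivan algebra up to the conventional choices of sign. I would extract these coefficients by running the Homotopy Transfer Theorem through Dupont's explicit contraction $(i_\bullet,p_\bullet,h_\bullet)$: the transferred operations are sums over planar binary trees whose leaves carry $i_1$, whose internal vertices carry the product of $\Omega_1$, whose internal edges carry the homotopy $h_1$, and whose root carries $p_1$; on the $1$-simplex the iterated integrals computing $h_1$ on products of the fundamental one-form $\omega_{\{0,1\}}$ evaluate to $B_n/n!$. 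This is the delicate point, since a priori infinitely many trees feed into each power of $\ad_\lambda$, and it is the generating-function identity $x/(e^x-1)=\sum_n(B_n/n!)x^n$ that packages them into the closed form above. Once this computation is carried out, matching generators and differentials identifies $\mc_1$ with the Lawrence--Sullivan algebra, completing the proof.
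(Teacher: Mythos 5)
Your overall strategy is sound, and it is worth noting that the paper itself offers no proof of \cref{prop:mc01}: the statement is imported wholesale from the cited reference \cite{rn17cosimplicial}, where it is established by essentially the computation you outline. Your bookkeeping is correct throughout the parts you actually carry out: the identification of the generators of $C_0$ and $C_1$ and their degrees, the observation that Dupont's contraction is trivial on the point so that the transferred $\C_\infty$-structure on $C_0$ has no higher operations and the cobar differential reduces to $da=-\tfrac12[a,a]$, the use of the (strict, because Dupont's contraction is simplicial) coface maps to deduce $da_i=-\tfrac12[a_i,a_i]$ without recomputation, and the degree count showing that only the operations $m_{n+1}$ with $n$ edge-inputs and one vertex-input can feed into $d\lambda$.

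Two caveats. First, the entire mathematical content of part (2) is the assertion that the transferred coefficients assemble into $\sum_n (B_n/n!)\,\ad_\lambda^n$; you correctly isolate this as the delicate point but do not carry it out, so as written your argument for (2) is an outline rather than a proof --- the sum over planar trees decorated by Dupont's homotopy $h_1$ still has to be evaluated (this is precisely the Cheng--Getzler-type computation performed in \cite{rn17cosimplicial} and by Bandiera), and until it is, the identification with the Lawrence--Sullivan algebra is not established. Second, your parenthetical alternative for (1) via \cref{thm:equivDHGgpd} does not work as stated: that theorem only provides a homotopy equivalence $\MC_\bullet(\g)\simeq\hom_{\dgl}(\mc_\bullet,\g)$, which does not identify the sets of $0$-simplices, and it is itself proved in the same reference using the explicit description of $\mc_\bullet$, so invoking it here would be circular. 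The direct computation is the right route; it just needs the coefficient extraction completed.
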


\subsection{The Goldman--Millson theorem}

The idea is now to use the model structure given above to obtain statements about pronilpotent Lie algebras. Namely, by \cref{prop:mc01} we have that the set of Maurer--Cartan elements of a Lie algebra $\g$ is in natural bijection with the set of morphisms
$$\mc_0\longrightarrow\g$$
of dg Lie algebras, where $\mc_0$ is the free Lie algebra on a Maurer--Cartan element. Similarly, gauge equivalences between Maurer--Cartan elements are coded by morphisms
$$\mc_1\longrightarrow\g\ .$$
Intuitively, $\mc_1$ behaves like a cylinder object for $\mc_0$, so that the gauge relation looks very similar to a left homotopy between Maurer--Cartan elements seen as maps. However, we do not have a good model structure on complete Lie algebras, and that's why we have to lift everything to the category of diagrams.

\medskip

In order to push down the results we will obtain for diagrams to actual algebras, we will need the following technical lemma.

\begin{lemma} \label{lemma:isomOfHoms}
	Let $\g$ be a Lie algebra which is complete with respect to its canonical filtration, and such that $\g^{(n)}\coloneqq\g/F^\lie_n\g$ is finite dimensional for all $n\ge2$. Let $\widetilde{\g}\in\pro{\mathsf{fd\ nil.\ Lie}}$ be the diagram
	\[
	\widetilde{\g} = \cdots\longrightarrow \g^{(4)}\longrightarrow \g^{(3)}\longrightarrow \g^{(2)}\longrightarrow 0\ .
	\]
	Let $H:\mathcal{D}\to\mathsf{fd\ nil.\ Lie}$ be any object of $\pro{\mathsf{fd\ nil.\ Lie}}$. Then we have a natural isomorphism
	\[
	\hom_{\pro{\mathsf{fd\ nil.\ Lie}}}(\widetilde{\g},H) \cong \hom_{\mathsf{dgLie}}(\g,\lim H)\ .
	\]
\end{lemma}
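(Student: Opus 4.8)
The plan is to unwind the definition of morphisms in a pro-category and reduce the whole statement to a single filtered-colimit computation that uses only the nilpotency of the targets. First I would record that the indexing poset $\{2,3,4,\dots\}$ of $\widetilde{\g}$ is cofiltered, so that by the definition of the pro-Hom
\[
\hom_{\pro{\mathsf{fd\ nil.\ Lie}}}(\widetilde{\g}, H) \;=\; \lim_{d\in\mathcal{D}}\ \colim{n}\ \hom_{\mathsf{fd\ nil.\ Lie}}\!\big(\g^{(n)}, H(d)\big),
\]
where the inner colimit is filtered, its transition maps being precomposition with the projections $\g^{(m)}\to\g^{(n)}$ for $m\ge n$. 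On the other side, the representable functor $\hom_{\mathsf{dgLie}}(\g,-)$ preserves limits, and $\lim H$ is by definition the limit of the diagram $H$ in $\mathsf{dgLie}$, so $\hom_{\mathsf{dgLie}}(\g,\lim H)\cong\lim_{d}\hom_{\mathsf{dgLie}}(\g, H(d))$. Hence it suffices to produce, naturally in a finite-dimensional nilpotent Lie algebra $N$, a bijection
\[
\colim{n}\ \hom_{\mathsf{fd\ nil.\ Lie}}\!\big(\g^{(n)}, N\big)\;\cong\;\hom_{\mathsf{dgLie}}(\g, N),
\]
and then apply it inside $\lim_{d}$ with $N=H(d)$.

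The core step is this last bijection. The comparison map sends a representative $f\colon\g^{(n)}\to N$ to the composite $\g\twoheadrightarrow\g^{(n)}\xrightarrow{\,f\,}N$; it is well defined on the colimit because the projections $\g\to\g^{(n)}$ are compatible with the transition maps. Injectivity follows from the surjectivity of each projection $\g\to\g^{(n)}$: two representatives inducing the same composite already agree after being pushed to a common level $k=\max\{m,n\}$. The crux is surjectivity. Given a morphism $F\colon\g\to N$ of dg Lie algebras, I would use that the canonical filtration is functorial --- a bracketing of $n$ elements goes to a bracketing of $n$ elements, and $d$ preserves $F^\lie_n$ by the Leibniz rule --- to conclude $F(F^\lie_n\g)\subseteq F^\lie_n N$ for all $n$. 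If $N_0$ denotes the nilpotency degree of $N$, then $F^\lie_{N_0}N=0$, so $F$ annihilates $F^\lie_{N_0}\g$ and factors as $\g\to\g^{(\max\{N_0,2\})}\to N$, exhibiting it in the image of the comparison map.

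Finally I would verify naturality and assemble. The displayed bijection is natural in $N$ since every map in its construction is a postcomposition, so the family indexed by $d\in\mathcal{D}$ is an isomorphism of diagrams $\mathcal{D}\to\sets$ and therefore induces an isomorphism of limits; the resulting identification is natural in $H$ for the same reason. I expect the genuine obstacle to be exactly the surjectivity above --- the factorization of an arbitrary $F$ through a finite-dimensional quotient --- which is the one place where the nilpotency of the $H(d)$ is essential; completeness of $\g$ is not needed for this lemma beyond being a standing hypothesis. Some care is also required to keep the variance straight when turning the cofiltered inverse system $\widetilde{\g}$ into the filtered colimit that appears in the pro-Hom.
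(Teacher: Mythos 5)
Your proposal is correct and follows essentially the same route as the paper: both unwind the pro-Hom and reduce to the identification $\colim{n}\,\hom_{\dgl}(\g^{(n)},H(d))\cong\hom_{\dgl}(\g,H(d))$ for each $d$, obtained by factoring any morphism into a finite-dimensional nilpotent target through a finite stage of the canonical filtration. Your explicit justification of the surjectivity step via $F(F^\lie_n\g)\subseteq F^\lie_n N$ and the nilpotency degree of $N$ is exactly the argument the paper invokes when it asserts that every map $\g\to H(d)$ splits through $\g^{(n)}$.
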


\begin{proof}
	We have
	\[
	\hom_{\mathsf{dgLie}}(\g,\lim H) = \lim_{d\in\mathcal{D}}\hom_{\mathsf{dgLie}}(\g,H(d))
	\]
	with all $H(d)$ finite dimensional. Fix $d\in\mathcal{D}$, then we have the linear diagram
	\[
	\cdots\longleftarrow\hom_{\mathsf{dgLie}}(\g^{(4)},H(d))\longleftarrow\hom_{\mathsf{dgLie}}(\g^{(3)},H(d))\longleftarrow\hom_{\mathsf{dgLie}}(\g^{(2)},H(d))\longleftarrow0\ .
	\]
	Since all the maps in the diagram $\widetilde{\g}$ were surjective, all the maps in this diagram are injective. Let $N$ be the nilpotency degree of $H(d)$. Then every map $\g\to H(d)$ splits through $\g^{(n)}$ for all $n\ge N$, so that we have
	\[
	\hom_{\mathsf{dgLie}}(\g,H(d))\cong\hom_{\mathsf{dgLie}}(\g^{(n)},H(d))
	\]
	for all $n\ge N$. It follows that
	\[
	\hom_{\mathsf{dgLie}}(\g,H(d))\cong\colim{n}\hom_{\mathsf{dgLie}}(\g^{(n)},H(d))\ ,
	\]
	which concludes the proof.
\end{proof}

In particular, this is true for any $\g$ of the form $\g\coloneqq\widehat{\lie}(V)$, for $V$ a finite dimensional graded vector space, with any differential. The examples of main interest to us will be the pronilpotent Lie algebras
\[
\mc_n\coloneqq \widehat{\Omega}_\pi(sC_n^\vee)
\]
introduced in \cref{subsect:mcbullet}.

\begin{lemma} \label{lemma:mcnCofibrant}
	Let $n\ge0$. We have
	$$\colim{}\ \widetilde{\mc}_n^\vee = \bar_\iota(s^{-1}C_n)\ .$$
	In particular, $\widetilde{\mc}_n$ is cofibrant.
\end{lemma}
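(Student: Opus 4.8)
The plan is to recognize $\bar_\iota(s^{-1}C_n)$ as the conilpotent Lie coalgebra whose linear dual is $\mc_n$ and then to check that $\colim{}$ reconstructs exactly this coalgebra. I would set $C\coloneqq\bar_\iota(s^{-1}C_n)$, so that by the identification $\mc_\bullet = \bar_\iota(s^{-1}C_\bullet)^\vee$ recalled in \cref{subsect:mcbullet} we have $\mc_n = C^\vee$. Since $C_n$ is finite dimensional, the underlying graded object of $C$ is the cofree conilpotent Lie coalgebra on $s^{-1}C_n$, whose weight pieces $C_{(j)}$ are all finite dimensional; write $C=\bigoplus_{j\ge1}C_{(j)}$ for this weight decomposition. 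Now $\colim{}$ sends the ind-object $\widetilde{\mc}_n^\vee$ — the sequence of injections dual to the surjections defining $\widetilde{\mc}_n$ — to the union inside $\mc_n^\vee = C^{\vee\vee}$ of the finite dimensional subcoalgebras $(\mc_n^{(k)})^\vee = (F^\lie_k\mc_n)^\perp$. So the whole statement reduces to computing these perpendiculars and their union.

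The crux is to match the canonical filtration of $\mc_n$ with the weight filtration of $C$. As a graded Lie algebra $\mc_n = \widehat{\lie}(sC_n^\vee)$ is free complete, so its canonical filtration \emph{is} the weight filtration: $F^\lie_k\mc_n = \prod_{j\ge k}\lie(j)\otimes_{\S_j}(sC_n^\vee)^{\otimes j}$, independently of the cobar differential (the differential only raises weight and so preserves this filtration). Dualizing, and using that each $C_{(j)}$ is finite dimensional, one obtains $(F^\lie_k\mc_n)^\perp = \bigoplus_{j<k}C_{(j)}$, the weight-$<k$ part of $C$; this is precisely the $k$-th step of the coradical filtration of $C$, and it is a genuine sub conilpotent Lie coalgebra because the bar differential lowers weight and the reduced coproduct splits weight. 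The transition maps of $\widetilde{\mc}_n^\vee$ thereby become the inclusions of successive coradical steps, whose colimit is the union $\bigcup_k\bigoplus_{j<k}C_{(j)} = C$. This gives $\colim{}\,\widetilde{\mc}_n^\vee = C = \bar_\iota(s^{-1}C_n)$. The hard part will be exactly this bookkeeping: one must ensure the identification is not merely of underlying graded vector spaces but of differential graded conilpotent Lie coalgebras, which forces the checks that the bar differential respects the weight filtration and that each perpendicular lands inside $C$ rather than in the completion $C^{\vee\vee}$ (this is where finite dimensionality of $C_n$ is essential).

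For the final assertion, I would invoke the way the model structure on $\pro{\mathsf{fd\ nil.\ Lie}}$ is transported from $\mathsf{conil.\ coLie}$ along the equivalence $\colim{}$ and the anti-equivalence $(-)^\vee$, recalling that an anti-equivalence interchanges cofibrations and fibrations. The composite functor sends $\widetilde{\mc}_n$ to $\colim{}\,\widetilde{\mc}_n^\vee = \bar_\iota(s^{-1}C_n)$, so $\widetilde{\mc}_n$ is cofibrant if and only if $\bar_\iota(s^{-1}C_n)$ is fibrant in $\mathsf{conil.\ coLie}$. By \cref{thm:ValletteModelStr} the fibrant objects are precisely the quasi-free conilpotent Lie coalgebras, and a bar construction is quasi-free by construction. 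Hence $\bar_\iota(s^{-1}C_n)$ is fibrant, and $\widetilde{\mc}_n$ is cofibrant.
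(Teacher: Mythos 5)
Your proposal is correct and follows essentially the same route as the paper's own proof: you identify the duals of the quotients $\mc_n/F^{\lie}_k\mc_n$ with the steps of the coradical (weight) filtration of $\bar_\iota(s^{-1}C_n)$, observe that the transition maps dualize to the inclusions whose colimit exhausts the coalgebra, and deduce cofibrancy from quasi-freeness (hence fibrancy) of the bar construction under the anti-equivalence. You simply spell out the weight-filtration bookkeeping that the paper leaves implicit.
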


\begin{proof}
	The dual of the quotient
	$$\mc_n^{(k)} = \mc_n/F^\lie_k\mc_n$$
	of $\mc_n$ by the $k$th space of the canonical filtration is exactly the $k$th space of the coradical filtration of $\bar_\iota(s^{-1}C_n)$, and the projections
	$$\mc_n^{(k+1)}\longrightarrow\mc_n^{(k)}$$
	become the inclusions of the various spaces of the coradical filtration of $\bar_\iota(s^{-1}C_n)$. From this, the result follows.
\end{proof}

\begin{lemma} \label{lemma:mc1Cylinder}
	The diagram $\widetilde{\mc}_1$ is a cylinder object for $\widetilde{\mc}_0$. After passing to the limit, the splitting of the codiagonal map becomes
	$$\mc_0\sqcup\mc_0\stackrel{i}{\longrightarrow}\mc_1\stackrel{t}{\longrightarrow}\mc_0$$
	where the two maps are as follows. The first Lie algebra is the free complete Lie algebra generated by two Maurer--Cartan elements $\alpha_0$ and $\alpha_1$, while the second one is the free complete Lie algebra generated by two Maurer--Cartan elements $\beta_1,\beta_2$ and a gauge $\lambda$ from $\beta_1$ to $\beta_2$, and the last algebra $\mc_0$ is the free Lie algebra on a single Maurer--Cartan element $\alpha$. The first map is determined by $i(\alpha_j) = \beta_j$ for $j=1,2$, the second map is determined by $t(\beta_j) = \alpha$ for $j=1,2$ and $t(\lambda) = 0$.
\end{lemma}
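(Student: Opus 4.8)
The plan is to verify directly the three defining conditions of a cylinder object for the fold map $\nabla\colon\widetilde{\mc}_0\sqcup\widetilde{\mc}_0\to\widetilde{\mc}_0$: that $i$ is a cofibration, that $t$ is a weak equivalence, and that $t\circ i=\nabla$. The starting point is to recognize $i$ and $t$ as the cosimplicial structure maps of the cosimplicial Lie algebra $\mc_\bullet$ of \cref{subsect:mcbullet}. Indeed, the two cofaces $d^0,d^1\colon\mc_0\to\mc_1$ send the Maurer--Cartan generator $\alpha$ to the two endpoints $\beta_1,\beta_2$, while the unique codegeneracy $s^0\colon\mc_1\to\mc_0$ collapses the interval, sending $\beta_1,\beta_2$ to $\alpha$ and the gauge $\lambda$ to $0$ (this matches the identification of $\mc_0$ and $\mc_1$ in \cref{prop:mc01}). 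By \cref{lemma:isomOfHoms}, maps out of $\widetilde{\mc}_0\sqcup\widetilde{\mc}_0$ and into $\widetilde{\mc}_0$ are computed on the limits, so $d^0,d^1,s^0$ lift to the diagram level and define $i=(d^0,d^1)$ and $t=s^0$ with the stated effect on generators. The cosimplicial identities $s^0d^0=s^0d^1=\id_{\mc_0}$ then give $t\circ i=\nabla$ at once.

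Next I would show that $t$ is a weak equivalence. Passing through the anti-equivalence between $\mathsf{conil.\ coLie}$ and $\pro{\mathsf{fd\ nil.\ Lie}}$, under which weak equivalences correspond to weak equivalences, the map $t=s^0$ corresponds by \cref{lemma:mcnCofibrant} to the coalgebra morphism $\bar_\iota(s^{-1}C_0)\to\bar_\iota(s^{-1}C_1)$ obtained by applying the bar construction $\bar_\iota$ to the $\susp\otimes\C_\infty$-algebra map $s^{-1}C_0\to s^{-1}C_1$ induced by the simplicial degeneracy of the Dupont complex $C_\bullet$. Since the simplices are contractible, this degeneracy is a quasi-isomorphism of cochain complexes, hence of $\C_\infty$-algebras; therefore $\bar_\iota$ sends it to a filtered quasi-isomorphism by the analogue of \cref{lemma:BarOfQIisFQI} for the twisting morphism $\iota$ (the two model structures agreeing by \cref{lemma:modelStrKappaIota}). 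As filtered quasi-isomorphisms are weak equivalences, $t$ is a weak equivalence.

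The main obstacle is showing that $i$ is a cofibration. My plan is to recognize $i$ as a free relative extension. On underlying complete graded Lie algebras, $\mc_1=\widehat{\lie}(\beta_1,\beta_2,\lambda)$ is obtained from $\mc_0\sqcup\mc_0=\widehat{\lie}(\alpha_0,\alpha_1)$ by freely adjoining the single degree-$0$ generator $\lambda$, the differential $d\lambda$ being the Lawrence--Sullivan element expressed in $\beta_1,\beta_2$; thus $i$ is the inclusion adjoining a ``cell'' along its boundary, which is a generating cofibration of the model structure of \cite{lazarev13} that coincides with ours. The point I expect to require the most care is to justify this inside $\pro{\mathsf{fd\ nil.\ Lie}}$ rather than for a single Lie algebra: one must check that the coproduct $\widetilde{\mc}_0\sqcup\widetilde{\mc}_0$ is again the canonical tower attached to $\widehat{\lie}(\alpha_0,\alpha_1)$, so that \cref{lemma:isomOfHoms} identifies $i$ with $(d^0,d^1)$, and that the free extension is compatible with the pro-structure. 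Equivalently, one can dualize and verify that the corresponding morphism of quasi-free conilpotent Lie coalgebras is a fibration, using that cofibrations in $\pro{\mathsf{fd\ nil.\ Lie}}$ are exactly the duals of fibrations in $\mathsf{conil.\ coLie}$ under the anti-equivalence.

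Once these three properties are in hand, $\widetilde{\mc}_1$ together with $i$ and $t$ is a cylinder object for $\widetilde{\mc}_0$; passing to the limit along the anti-equivalence recovers the explicit description of $i$ and $t$ on $\mc_0$ and $\mc_1$ given in the statement.
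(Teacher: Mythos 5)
Your proposal is correct and, at its core, runs on the same engine as the paper's proof: pass through the anti-equivalence to $\mathsf{conil.\ coLie}$, identify the relevant maps as $\bar_\iota$ applied to the degeneracy $C_0\to C_1$, $1\mapsto\omega_0+\omega_1$ (a quasi-isomorphism, hence sent to a filtered quasi-isomorphism by the analogue of \cref{lemma:BarOfQIisFQI} for $\iota$, using \cref{lemma:modelStrKappaIota}) and to the surjection $C_1\to C_0\times C_0$ (sent to a fibration of coalgebras). The one place you genuinely diverge is the cofibration step. The paper never argues on the Lie-algebra side at all: it observes that $\bar_\iota$, being a right adjoint, turns $s^{-1}C_0\times s^{-1}C_0$ into the product of coalgebras, exhibits $\bar_\iota(s^{-1}C_1)$ as a \emph{path object} for $\bar_\iota(s^{-1}C_0)$ in $\mathsf{conil.\ coLie}$, and then dualizes the whole factorization in one stroke --- which is exactly your ``equivalently, one can dualize'' fallback. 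Your primary route, recognizing $i$ as a free cell attachment and invoking the generating cofibrations of the Lazarev--Markl model structure, would work but imports the identification of the two model structures, which this paper only asserts in a remark without proof; it is therefore less self-contained than the dual argument, and it also leaves you with the coproduct/pro-structure compatibility issue you flag, which the right-adjointness of $\bar_\iota$ disposes of for free on the coalgebra side. I would promote your alternative to the main argument and drop the cell-attachment route, or at least cite \cite[Def.~9.9]{lazarev13} with a proof of the comparison if you keep it.
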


\begin{proof}
	By \cref{lemma:mcnCofibrant}, we know that $\widetilde{\mc}_n$ corresponds to the conilpotent Lie coalgebra $\bar_\iota(s^{-1}C_n)$. Since $\bar_\iota$ is right adjoint, we have
	$$\bar_\iota(s^{-1}C_0)\times\bar_\iota(s^{-1}C_0)\cong\bar_\iota(s^{-1}C_0\times s^{-1}C_0)$$
	with the obvious differential in the category of conilpotent Lie coalgebras. The diagonal map
	$$C_0\stackrel{\Delta}{\longrightarrow}C_0\times C_0$$
	in the category of $\C_\infty$-algebras splits into
	$$C_0\stackrel{T}{\longrightarrow}C_1\stackrel{I}{\longrightarrow}C_0\times C_0$$
	where $T(1) = \omega_0 + \omega_1$, $I(\omega_0) = (1,0)$, $I(\omega_1) = (0,1)$, and $I(\omega_{01}) = (0,0)$. The map $T$ is a quasi-isomorphism as it is induced by the homotopy equivalence $\Delta^1\to\Delta^0$. Therefore, by a result analogous to \cref{lemma:BarOfQIisFQI}, it is sent to a filtered quasi-isomorphism $\bar_\iota(T)$ by the bar construction. The map $I$ is clearly surjective, and thus $\bar_\iota(I)$ is a fibration by \cite[Thm. 2.9(2)]{vallette14}. Notice that the mentioned theorem is stated for the twisting morphism $\kappa$, but it also holds for $\iota$ by repeating the same proof and using \cref{lemma:modelStrKappaIota}. It follows that $\bar_\iota(s^{-1}C_1)$ is a path object for $\bar_\iota(s^{-1}C_0)$, which is equivalent to the statement we wanted to prove. Recovering the exact structure of the maps after passing to the limit is straightforward.
\end{proof}

We are now set for the proof of the Goldman--Millson theorem for pronilpotent Lie algebras.

\begin{theorem}\label{thm:homotopical GM}
	Let $\g,\h$ be two pronilpotent dg Lie algebras, and let $\phi:\g\to\h$ be the dual of a weak equivalence of conilpotent Lie coalgebras. Then $\phi$ induces a bijection
	\[
	\MCbar(\g)\cong\MCbar(\h)\ .
	\]
\end{theorem}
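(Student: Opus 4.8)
The plan is to transport the whole statement into the model category $\pro{\mathsf{fd\ nil.\ Lie}}$, where the moduli space $\MCbar$ becomes a set of homotopy classes of maps out of the cofibrant object $\widetilde{\mc}_0$, and the dual of a weak equivalence of coalgebras becomes an honest weak equivalence between fibrant objects; the theorem then follows from the homotopy-invariance of such hom-sets.

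First I would identify the Maurer--Cartan set. By \cref{prop:mc01} the algebra $\mc_0$ is free on a single Maurer--Cartan element, so $\MC(\g)\cong\hom_{\dgl}(\mc_0,\g)$, naturally in $\g$. As $\mc_0=\widehat{\lie}(sC_0^\vee)$ is a free complete Lie algebra on a finite dimensional space, \cref{lemma:isomOfHoms} applies with first argument $\mc_0$ and second argument the pro-object $\widetilde{\g}$ representing $\g$; using that $\g$ is pronilpotent, hence complete, so that $\lim\widetilde{\g}=\g$, it yields a natural isomorphism $\hom_{\dgl}(\mc_0,\g)\cong\hom_{\pro{\mathsf{fd\ nil.\ Lie}}}(\widetilde{\mc}_0,\widetilde{\g})$. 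Naturality in the pro-object variable means that, under the resulting identification $\MC(\g)\cong\hom_{\pro{\mathsf{fd\ nil.\ Lie}}}(\widetilde{\mc}_0,\widetilde{\g})$, the map $\MC(\phi)$ corresponds to post-composition with the weak equivalence $\widetilde{\phi}\colon\widetilde{\g}\to\widetilde{\h}$ lifting $\phi$.

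Next I would account for the gauge relation. By \cref{lemma:mc1Cylinder} the object $\widetilde{\mc}_1$ is a cylinder for $\widetilde{\mc}_0$, and applying \cref{lemma:isomOfHoms} to $\mc_1$ (again free complete on a finite dimensional space) identifies $\hom_{\pro{\mathsf{fd\ nil.\ Lie}}}(\widetilde{\mc}_1,\widetilde{\g})\cong\hom_{\dgl}(\mc_1,\g)$; since by \cref{prop:mc01} a map out of $\mc_1$ encodes a gauge between the two Maurer--Cartan elements it determines, a left homotopy via the cylinder $\widetilde{\mc}_1$ is precisely a gauge between the corresponding Maurer--Cartan elements of $\g$. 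Since $\widetilde{\mc}_0$ is cofibrant by \cref{lemma:mcnCofibrant} and every object of $\pro{\mathsf{fd\ nil.\ Lie}}$ is fibrant, left homotopy on $\hom_{\pro{\mathsf{fd\ nil.\ Lie}}}(\widetilde{\mc}_0,\widetilde{\g})$ is an equivalence relation which, the target being fibrant, can be realized by any cylinder object, in particular by $\widetilde{\mc}_1$. Hence the left-homotopy relation equals the relation ``there exists a gauge'', and since the latter is already an equivalence relation it coincides with $\sim_{\mathrm{gauge}}$; this gives a natural bijection $\MCbar(\g)\cong[\widetilde{\mc}_0,\widetilde{\g}]$ onto homotopy classes of maps, under which $\MCbar(\phi)$ is post-composition with $\widetilde{\phi}$.

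Finally I would invoke the model structure. By hypothesis $\widetilde{\phi}\colon\widetilde{\g}\to\widetilde{\h}$ is a weak equivalence between fibrant objects, so it suffices to know that for a cofibrant object $X$ post-composition with a weak equivalence between fibrant objects induces a bijection $[X,\widetilde{\g}]\cong[X,\widetilde{\h}]$. Concretely I would pass to the homotopy category: for $\widetilde{\g}$ fibrant one has $[\widetilde{\mc}_0,\widetilde{\g}]\cong\ho{\cat}(\widetilde{\mc}_0,\widetilde{\g})$, obtained from \cref{thm:model categories} by cofibrant replacement (which lands in $\cat_{cf}$ precisely because all objects are fibrant), and $\widetilde{\phi}$ becomes an isomorphism in $\ho{\cat}$ since it is a weak equivalence. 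Composing the identifications then gives the desired bijection $\MCbar(\g)\cong\MCbar(\h)$ induced by $\phi$. I expect the main obstacle to be exactly this last comparison: \cref{thm:model categories} is stated only for bifibrant objects, so extending the identification $[\,\cdot\,,\widetilde{\g}]\cong\ho{\cat}(\,\cdot\,,\widetilde{\g})$ to a target $\widetilde{\g}$ that is fibrant but need not be cofibrant requires a careful cofibrant-replacement argument (lifting and homotopy-lifting against the trivial fibration $Q\widetilde{\g}\to\widetilde{\g}$).
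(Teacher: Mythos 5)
Your proposal is correct and follows essentially the same route as the paper's own proof: identify $\MC(\g)$ and the gauge relation with maps out of $\widetilde{\mc}_0$ and the left-homotopy relation via \cref{lemma:isomOfHoms}, \cref{lemma:mc1Cylinder}, and \cref{lemma:mcnCofibrant}, then conclude from fibrancy of all objects of $\pro{\mathsf{fd\ nil.\ Lie}}$ and the homotopy-category description of \cref{thm:model categories}. The bifibrancy caveat you raise at the end is real but is handled by the standard cofibrant-replacement argument (as in \cite[Thm. 1.2.10]{hovey}), and the paper glosses over it in exactly the same way.
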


\begin{proof}
	Let $G,H\in\pro{\mathsf{fd\ nil.\ Lie}}$ and
	\[
	\Phi\in\hom_{\pro{\mathsf{fd\ nil.\ Lie}}}(G,H)
	\]
	be a weak equivalence such that $\lim G = \g$, $\lim H = \h$, and such that $\Phi$ corresponds to $\phi$ after passing to the limits. By definition, the moduli space of Maurer--Cartan elements of a Lie algebra $\g$ is the quotient
	\[
	\MCbar(\g) = \MC(\g)/\sim_{\mathrm{gauge}}
	\]
	of the space of Maurer--Cartan elements of $\g$ by the gauge relation. As explained above, we can see Maurer--Cartans elements in $\g$ as morphisms $\mc_0\to\g$ and gauge equivalences are coded by morphisms $\mc_1\to\g$. That is to say, two Maurer--Cartan elements $x_0,x_1\in\MC(\g)$ are gauge equivalent if, and only if there exists a morphism $\mc_1\to\g$ making the following diagram commute.
	\begin{center}
		\begin{tikzpicture}
			\node (a) at (0,0){$\mc_0$};
			\node (b) at (1,-1.5){$\mc_1$};
			\node (c) at (3,-1.5){$\g$};
			\node (d) at (0,-3){$\mc_0$};
			
			\draw[->] (a) -- node[right]{$i_0$} (b);
			\draw[->] (d) -- node[right]{$i_1$} (b);
			\draw[->] (b) -- (c);
			\draw[->] (a) to[out=0,in=140] node[above right]{$x_0$} (c);
			\draw[->] (d) to[out=0,in=-140] node[below right]{$x_1$} (c);
		\end{tikzpicture}
	\end{center}
	But by \cref{lemma:isomOfHoms} and \cref{lemma:mc1Cylinder}, this is exactly the left homotopy relation on morphisms $\widetilde{\mc}_0\to G$. Therefore,
	\[
	\MCbar(\g)\cong\hom_{\pro{\mathsf{fd\ nil.\ Lie}}}(\widetilde{\mc}_0,G)/\sim_\ell = \hom_{\mathsf{Ho}(\pro{\mathsf{fd\ nil.\ Lie}})}(\widetilde{\mc}_0,G)\ ,
	\]
	where the last equality is given by the fact that all elements of $\pro{\mathsf{fd\ nil.\ Lie}}$ are fibrant, as well as the fact that $\widetilde{\mc}_0$ is cofibrant by \cref{lemma:mcnCofibrant}. The same thing is of course true for $\h$, and since $\phi:\g\to\h$ comes from a weak equivalence, it naturally induces a bijection
	\[
	\MCbar(\g)\cong\MCbar(\h)
	\]
	by \cref{thm:model categories}, as desired.
\end{proof}

Although this result is slightly weaker than the full Goldman--Millson theorem (it works only on some algebras, and we don't have all the morphisms we would like, cf. \cref{rem:not all filtered qi are duals}), it has the advantage of having a fully homotopical proof, which is good considering the homotopical flavor of the statement.

\section{Framings and the Dolgushev--Rogers theorem} \label{sect:DRThm}

The last section can be seen as the ``level $0$" of a higher theory, which we present in this section. We start with a short recollection on simplicial frames following \cite[Ch. 5]{hovey} and then proceed to apply this theory to our model category, showing how the whole cosimplicial Lie algebra $\mc_\bullet$ appears naturally in this context. We are then able to recover the Dolgushev--Rogers theorem of \cite{dolgushev15} in this context by arguments similar to the ones used for the Goldman--Millson theorem in the last section.

\subsection{Simplicial frames}

Fix a model category $(\cat,W,C,F)$ and endow the category $s\cat\coloneqq\cat^{\Delta^{op}}$ of simplicial objects in $\cat$ with the Reedy model structure, see e.g. \cite[Thm. 5.2.5]{hovey}. Given an object $c\in\cat$, there are two natural ways to define an object $X_\bullet\in s\cat$:
\begin{enumerate}
	\item one can either ask that $X_n = L_nX$ is the $n$-th latching object, in which case $\ell_\bullet c\coloneqq X_\bullet$ is given by $\ell_n c = c$,
	\item or one can impose $X_n = M_nX$, the $n$-th matching object, obtaining a simplicial object $r_\bullet c\coloneqq X_\bullet$ given by $r_n c = c\times\cdots\times c$, the $(n+1)$-fold product of $c$.
\end{enumerate}
There is a canonical natural transformation
$$\ell_\bullet\longrightarrow r_\bullet$$
given by the identity in degree $0$ and by the diagonal in higher degrees.

\begin{definition}
	A \emph{simplicial frame} $c_*$ on an object $c\in\cat$ is a factorization of the canonical map $\ell_\bullet c\to r_\bullet c$ into a weak equivalence followed by a fibration that is an isomorphism in degree $0$,
	$$\ell_\bullet c\longrightarrow c_*\longrightarrow r_\bullet c\ .$$
\end{definition}

\begin{proposition}\label{prop:framings}
	Suppose $c$ is a fibrant object in $\cat$ and let $c_*$ be a simplicial frame on $c$. Then the functor
	$$\hom_\cat(-,c_*):\cat^{op}\longrightarrow\ssets$$
	preserves fibrations, trivial fibrations, and weak equivalences between fibrant objects.
\end{proposition}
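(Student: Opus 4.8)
The plan is to begin by recording what the statement actually asserts once the variance of the source category $\cat^{op}$ is taken into account. A fibration (resp.\ trivial fibration, resp.\ weak equivalence between fibrant objects) of $\cat^{op}$ is precisely a cofibration (resp.\ trivial cofibration, resp.\ weak equivalence between cofibrant objects) of $\cat$. So the three things to prove are: for every cofibration $j\colon x\to y$ of $\cat$, the map $\hom_\cat(j,c_*)\colon\hom_\cat(y,c_*)\to\hom_\cat(x,c_*)$ is a Kan fibration; for every trivial cofibration it is a trivial Kan fibration; and for every weak equivalence $w$ between cofibrant objects it is a weak equivalence of simplicial sets. I would deduce the third statement from the second by Ken Brown's lemma applied inside the model category $\cat^{op}$: there, trivial fibrations between fibrant objects are exactly trivial cofibrations between cofibrant objects of $\cat$, so once I know these are sent to trivial Kan fibrations (which are weak equivalences), Ken Brown's lemma upgrades this to the statement that all weak equivalences between fibrant objects of $\cat^{op}$, i.e.\ all weak equivalences between cofibrant objects of $\cat$, are sent to weak equivalences.

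The engine for the first two statements is a cotensor adjunction. For a levelwise finite simplicial set $K$ (which is all I need, since the generating cofibrations and trivial cofibrations of $\ssets$ are levelwise finite), completeness of $\cat$ lets me form $c_*^K\coloneqq\int_{[m]\in\Delta}(c_*)_m^{K_m}\in\cat$, and there is a natural isomorphism $\hom_\cat(A,c_*^K)\cong\hom_{\ssets}(K,\hom_\cat(A,c_*))$. Transposing a lifting square across this adjunction shows that $\hom_\cat(j,c_*)$ has the right lifting property against a monomorphism $\iota\colon K\to L$ if and only if $j$ has the left lifting property against the induced map $c_*^\iota\colon c_*^L\to c_*^K$. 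By the Yoneda lemma the adjunction identifies $c_*^{\Delta^n}\cong(c_*)_n$ and $c_*^{\partial\Delta^n}\cong M_n(c_*)$, so that $c_*^{\partial\Delta^n\hookrightarrow\Delta^n}$ is exactly the $n$-th matching map of $c_*$. Now $c$ is fibrant, hence $r_\bullet c$ is Reedy fibrant (a standard fact), and since $c_*\to r_\bullet c$ is a Reedy fibration the composite $c_*\to r_\bullet c\to *$ exhibits $c_*$ as Reedy fibrant; thus every matching map $(c_*)_n\to M_n(c_*)$ is a fibration in $\cat$. Writing a general monomorphism $K\hookrightarrow L$ of finite simplicial sets as a finite tower of pushouts of boundary inclusions and using that $c_*^{(-)}$ turns these pushouts into pullbacks (again by the cotensor adjunction and Yoneda), I conclude that $c_*^\iota$ is a fibration for every monomorphism $\iota$.

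This already settles the trivial cofibration case: a trivial Kan fibration is detected by the right lifting property against the boundary inclusions, and by the transposition this amounts to lifting the trivial cofibration $j$ against the matching maps $c_*^{\Delta^n}\to c_*^{\partial\Delta^n}$, which are fibrations, so a lift exists. For the cofibration case I reduce in the same way the assertion that $\hom_\cat(j,c_*)$ is a Kan fibration to lifting $j$ against the horn maps $c_*^{\Lambda^n_k\hookrightarrow\Delta^n}\colon c_*^{\Delta^n}\to c_*^{\Lambda^n_k}$; since $j$ is a cofibration, it will lift provided these horn maps are \emph{trivial} fibrations.

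The main obstacle is therefore exactly this last point. That the horn maps are fibrations is already covered by the previous paragraph (horns are monomorphisms), so the real content is that they are weak equivalences, and this is where the weak-equivalence half of the frame condition becomes essential: the Reedy weak equivalence $\ell_\bullet c\to c_*$ forces $c_*$ to be a Reedy-fibrant resolution of the homotopically constant object on $c$, and it is this homotopy-constancy that makes $c_*^{(-)}$ carry the anodyne inclusions $\Lambda^n_k\hookrightarrow\Delta^n$ to weak equivalences; I would supply this step through the framing calculus of \cite[Ch.\ 5]{hovey}. Granting it, the cofibration $j$ lifts against the trivial fibration $c_*^{\Lambda^n_k\hookrightarrow\Delta^n}$, so $\hom_\cat(j,c_*)$ is a Kan fibration, and together with the two preceding paragraphs this establishes all three preservation statements.
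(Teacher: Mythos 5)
Your proposal is correct, and it is best described as an unpacked version of the paper's proof, which is deliberately terse: the paper disposes of the fibration and trivial-fibration statements by citing \cite[Cor.~5.4.4(2)]{hovey}, and then, exactly as you do, upgrades to weak equivalences between fibrant objects via Ken Brown's lemma --- your last reduction is verbatim the paper's. Where you differ is that you reconstruct most of the argument hiding behind the Hovey citation: the cotensor adjunction $\hom_\cat(A,c_*^K)\cong\hom_{\ssets}(K,\hom_\cat(A,c_*))$, the transposition of lifting problems, the identification of $c_*^{\partial\Delta^n\hookrightarrow\Delta^n}$ with the matching maps, and the Reedy fibrancy of $c_*$ (correctly deduced from fibrancy of $c$ and the fibration half of the frame condition). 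This gives a complete, self-contained proof of the trivial-fibration case, and it correctly isolates the single nontrivial input still needed for the fibration case, namely that the horn cotensors $c_*^{\Delta^n}\to c_*^{\Lambda^n_k}$ are weak equivalences --- the point where the weak-equivalence half of the frame condition is used --- which you defer to the framing calculus of \cite[Ch.~5]{hovey}. That deferral is legitimate (it is precisely the technical core of the corollary the paper cites), so your proof is not less complete than the paper's; what your version buys is an explicit accounting of which half of the definition of a simplicial frame is responsible for which preservation property, at the cost of length, while the paper's version buys brevity by black-boxing the whole cotensor argument into the standard reference.
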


\begin{proof}
	The fact that the functor preserves fibrations and trivial fibrations is proven in \cite[Cor. 5.4.4(2)]{hovey}. The fact that it preserves weak equivalences between fibrant objects then follows by Ken Brown's lemma (e.g. \cite[Lemma 1.1.12]{hovey}).
\end{proof}

\begin{remark}
	Notice that $\hom_\cat(-,c_*)$ is a functor that has $\C^{op}$ as domain (which is endowed with the dual model structure). Therefore, the proposition above states that the map given by the pullback by a \emph{co}fibration is a fibration of simplicial sets, and so on.
\end{remark}

\subsection{The Deligne--Hinich--Getzler $\infty$-groupoid and the Dolgushev--Rogers theorem}

We apply the theory exposed above to our case. We take for $\cat$ the category of conilpotent Lie coalgebras and for $c\in\cat$ the Lie coalgebra $\bar_\iota(s^{-1}C_0)$.

\begin{lemma}
	The simplicial conilpotent Lie coalgebra $\bar_\iota(s^{-1}C_\bullet)$ is a simplicial frame on $\bar_\iota(s^{-1}C_0)$.
\end{lemma}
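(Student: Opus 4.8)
The plan is to verify the three defining conditions of a simplicial frame for the simplicial object $\bar_\iota(s^{-1}C_\bullet)$, namely that it factors the canonical map $\ell_\bullet c \to r_\bullet c$ (for $c = \bar_\iota(s^{-1}C_0)$) as a weak equivalence followed by a fibration which is an isomorphism in degree $0$. I would work throughout by transporting everything through the functor $\bar_\iota$ applied to the dual of the Dupont simplicial cochain complex $C_\bullet$, exploiting the fact that $\bar_\iota$ is a right adjoint (so it sends limits, and in particular finite products, in $\C_\infty$-algebras to the corresponding products of conilpotent Lie coalgebras), together with the comparison between the $\iota$- and $\kappa$-model structures from \cref{lemma:modelStrKappaIota} so that the fibration/weak equivalence criteria of \cref{thm:ValletteModelStr} and \cite[Thm.~2.9]{vallette14} apply.

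First I would identify the latching and matching objects. In degree $0$ both $\ell_0 c$ and $r_0 c$ are just $c = \bar_\iota(s^{-1}C_0)$, and the frame is required to be the identity there; this is immediate since $C_0 = \k$ is one-dimensional and the simplicial structure is trivial in degree $0$. Then I would check the weak-equivalence half: the map $\ell_\bullet c \to \bar_\iota(s^{-1}C_\bullet)$ should be a (Reedy) weak equivalence. Here the key computation is that in each simplicial degree $n$ the collapse map $\Delta^n \to \Delta^0$ induces a quasi-isomorphism $C_0 \to C_n$ (the Dupont complex $C_n$ computes the cohomology of the $n$-simplex, which is that of a point), exactly as in the degenerate case $n=1$ treated in the proof of \cref{lemma:mc1Cylinder}. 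By the analogue of \cref{lemma:BarOfQIisFQI} for $\iota$, each such quasi-isomorphism is sent by $\bar_\iota$ to a filtered quasi-isomorphism, hence a weak equivalence in $\mathsf{conil.\ coLie}$, giving the weak equivalence levelwise.

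Next I would verify the fibration half, namely that $\bar_\iota(s^{-1}C_\bullet) \to r_\bullet c$ is a Reedy fibration. The Reedy fibration condition at level $n$ is a lifting/matching-object condition, so I would reduce it to showing that the relevant comparison map to the matching object is surjective, because surjections are sent to fibrations by \cite[Thm.~2.9(2)]{vallette14} (which holds for $\iota$ as well, again via \cref{lemma:modelStrKappaIota}). On the $\C_\infty$-algebra side this amounts to the statement that the map from $C_n$ to the matching object of $C_\bullet$ is injective (dualizing reverses direction and $\bar_\iota$ is contravariant-behaving through the dual), and this injectivity comes from the fact that $C_\bullet$ is a genuine simplicial cochain complex whose face and degeneracy structure separates the generators $\omega_I$; the single relation $T(1)=\omega_0+\omega_1$ computed in \cref{lemma:mc1Cylinder} is the $n=1$ instance of this pattern.

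The main obstacle I expect is handling the matching object combinatorially in general degree $n$ rather than just $n=1$: one must identify $M_n$ of the simplicial cochain complex $C_\bullet$ explicitly enough to confirm both that the collapse map is a levelwise quasi-isomorphism (for the weak equivalence) and that the comparison-to-matching map is surjective after applying $\bar_\iota$ (for the fibration). Since the degree-$0$ normalization and the two halves all have direct $n=1$ prototypes already established in \cref{lemma:mc1Cylinder}, the essential work is to promote those arguments from a single degree to all degrees, which is a routine—if slightly tedious—induction on the simplicial structure of the Dupont complex; I would not grind through the simplicial identities here but simply invoke that $C_\bullet$ is a Reedy-cofibrant simplicial cochain complex modeling the simplices, together with the fibration and weak-equivalence criteria of \cite{vallette14} transported along $\bar_\iota$.
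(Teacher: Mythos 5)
Your proposal follows essentially the same route as the paper: exhibit the frame factorization as $\bar_\iota$ applied to the explicit maps of $\C_\infty$-algebras $C_0\xrightarrow{w}C_n\xrightarrow{p}C_0\times\cdots\times C_0$, with $w$ a quasi-isomorphism (pullback along the collapse $\Delta^n\to\Delta^0$) yielding the weak equivalence via the $\iota$-analogue of \cref{lemma:BarOfQIisFQI}, and $p$ surjective yielding the fibration via \cite[Thm. 2.9(2)]{vallette14} together with \cref{lemma:modelStrKappaIota}, exactly as in \cref{lemma:mc1Cylinder}. Two small remarks: the relevant condition on the $\C_\infty$-algebra side is surjectivity of $p$ (the bar construction is covariant here and sends surjections to fibrations), not injectivity into the matching object as your parenthetical suggests; and the paper's own proof does not unwind the Reedy matching-object condition beyond checking the levelwise maps to $r_\bullet c$, so your concern about that point is, if anything, more scrupulous than the printed argument.
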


\begin{proof}
	At level $0$, both maps are the identity. For $n\ge0$ the maps are induced by the maps of $\C_\infty$-algebras
	\[
	C_0\stackrel{w}{\longrightarrow}C_n\stackrel{p}{\longrightarrow}C_0\times\cdots\times C_0
	\]
	explicitly given by
	\[
	w(1) = \omega_0+\cdots+\omega_n
	\]
	and
	\[
	p(\omega_i) = (0,\ldots,0,\underbrace{1}_{i},0,\ldots,0),\qquad\mathrm{and}\qquad p(\omega_I) = 0\quad\forall |I|\ge2\ .
	\]
	The map $w$ is the pullback by the unique map $\Delta^n\to\Delta^0$, which is a homotopy equivalence. Thus, $w$ is a quasi-isomorphism and we obtain a weak equivalence when we apply the bar construction to it by an analogue to \cref{lemma:BarOfQIisFQI}. Similarly, the map $p$ is surjective, and thus is sent to a fibration by the bar construction, again by \cite[Thm. 2.9(2)]{vallette14} as in the proof of \cref{lemma:mc1Cylinder}.
\end{proof}

This simple fact is enough to recover all the most important results about $\hom_{\dgl}(\mc_\bullet,\g)$.

\begin{theorem}\label{thm:homotopical DR}
	The functor
	$$\hom_{\dgl}(\mc_\bullet,-):\mathsf{pronil.\ Lie}\longrightarrow\ssets$$
	sends duals of injections of conilpotent Lie coalgebras to fibrations and duals of weak equivalences of conilpotent Lie coalgebras to weak equivalences. In particular, it has image in the full subcategory of Kan complexes.
\end{theorem}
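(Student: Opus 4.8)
The plan is to recognize the functor $\hom_{\dgl}(\mc_\bullet,-)$ as the mapping-space functor into the simplicial frame $\bar_\iota(s^{-1}C_\bullet)$, transported across the anti-equivalence between $\mathsf{pronil.\ Lie}$ and $\mathsf{conil.\ coLie}$, and then to quote \cref{prop:framings}. The key preliminary step is therefore to produce, for every $\g\in\mathsf{pronil.\ Lie}$, a natural isomorphism of simplicial sets
\[
\hom_{\dgl}(\mc_\bullet,\g)\cong\hom_{\mathsf{conil.\ coLie}}(C,\bar_\iota(s^{-1}C_\bullet))\ ,
\]
where $C$ is the conilpotent Lie coalgebra with $C^\vee=\g$.

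To build this isomorphism, write $\g=\lim G$ with $G\in\pro{\mathsf{fd\ nil.\ Lie}}$. Since each $\mc_n=\widehat{\lie}(sC_n^\vee)$ is a free complete Lie algebra on a finite-dimensional space, \cref{lemma:isomOfHoms} identifies $\hom_{\dgl}(\mc_n,\g)$ with $\hom_{\pro{\mathsf{fd\ nil.\ Lie}}}(\widetilde{\mc}_n,G)$. Passing through the anti-equivalence between $\pro{\mathsf{fd\ nil.\ Lie}}$ and $\ind{\mathsf{fd\ conil.\ coLie}}$ turns this into a hom-set of ind-objects, with the arrows reversed, which the equivalence $\ind{\mathsf{fd\ conil.\ coLie}}\cong\mathsf{conil.\ coLie}$ sends to $\hom_{\mathsf{conil.\ coLie}}(C,\colim{}\,\widetilde{\mc}_n^\vee)$; by \cref{lemma:mcnCofibrant} the target is exactly $\bar_\iota(s^{-1}C_n)$. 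These identifications are compatible with the cosimplicial structure, so they assemble to the desired isomorphism of simplicial sets, and a morphism $\phi=\psi^\vee$ dual to $\psi\colon C\to D$ goes to precomposition with $\psi$ under this identification.

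With this in hand the theorem becomes formal. The coalgebra $\bar_\iota(s^{-1}C_0)$ is quasi-free, hence fibrant, and $\bar_\iota(s^{-1}C_\bullet)$ is a simplicial frame on it by the preceding lemma, so \cref{prop:framings} applies: the contravariant functor $\hom_{\mathsf{conil.\ coLie}}(-,\bar_\iota(s^{-1}C_\bullet))$ carries cofibrations to fibrations and weak equivalences between cofibrant objects to weak equivalences. Since every conilpotent Lie coalgebra is cofibrant and the cofibrations are exactly the injections, injections go to fibrations and all weak equivalences go to weak equivalences of simplicial sets. Translating back through the isomorphism above shows that $\hom_{\dgl}(\mc_\bullet,-)$ sends duals of injections to fibrations and duals of weak equivalences to weak equivalences.

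Finally, the unique injection $0\hookrightarrow C$ from the zero (initial) coalgebra has as its dual the terminal map $\g\to 0$; applying the fibration statement just proven yields a fibration $\hom_{\dgl}(\mc_\bullet,\g)\to\hom_{\dgl}(\mc_\bullet,0)=\ast$, so each $\hom_{\dgl}(\mc_\bullet,\g)$ is a Kan complex. The only real work is the bookkeeping of the first two paragraphs: keeping track of the reversal of arrows through the anti-equivalence and of the interchange of $\lim$ and $\colim{}$ with linear duality. Once the natural isomorphism is set up correctly, the homotopical content is entirely carried by \cref{prop:framings}.
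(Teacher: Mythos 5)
Your proposal is correct and follows essentially the same route as the paper: identify $\hom_{\dgl}(\mc_\bullet,\g)$ with $\hom_{\mathsf{conil.\ coLie}}(C,\bar_\iota(s^{-1}C_\bullet))$ via \cref{lemma:isomOfHoms}, \cref{cor:equivCats} and \cref{lemma:mcnCofibrant}, then invoke \cref{prop:framings} together with the facts that every conilpotent Lie coalgebra is cofibrant and $\bar_\iota(s^{-1}C_0)$ is fibrant. Your version is in fact slightly more careful than the paper's, spelling out the arrow-reversal bookkeeping and the deduction of the Kan condition from the map to the terminal object.
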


\begin{proof}
	Let $\g$ be a pronilpotent dg Lie algebra. Then $\g = \lim G$ for some $G\in\pro{\mathsf{fd\ nil.\ Lie}}$. Let $C\coloneqq \colim{}\,G^\vee$, so that $C^\vee = \g$. Then
	\begin{align*}
		\hom_{\dgl}(\mc_\bullet,\g)\cong&\ \hom_{\pro{\mathsf{fd\ nil.\ Lie}}}(\widetilde{\mc}_\bullet,G)\\
		\cong&\ \hom_{\mathsf{conil.\ coLie}}(C,\bar_\iota(s^{-1}C_\bullet))\ ,
	\end{align*}
	where the first isomorphism is given by \cref{lemma:mcnCofibrant}, and the second one by the equivalence of categories of \cref{cor:equivCats}. By \cref{thm:ValletteModelStr}, we know that $C$ is always cofibrant (and thus fibrant in the opposite category), and that $\bar_\iota(s^{-1}C_0)$ is fibrant, since it is quasi-free. \cref{prop:framings} concludes the proof.
\end{proof}

The statement that the functor sends duals of injections to fibrations should be seen analogous to \cref{thm:Getzler}, while the assertion that duals of weak equivalences of coalgebras are sent to weak equivalences is a weaker version of \cref{thm:DR}, in the same way that \cref{thm:homotopical GM} is a weaker version of \cref{thm:GM}.

\subsection{Relations with the BFMT model structure on Lie algebras}\label{subsect:relations with BFMT}

In the article \cite{buijs16}, Buijs--F\'elix--Murillo--Tanr\'e introduced a model structure on a slightly larger category: the category of all complete Lie algebras and filtered morphisms. They have the declared goal of developing a way to do rational homotopy theory for all spaces using Lie algebra models. We summarize their results and compare them with the present work.

\medskip

Let $\g$ be a complete Lie algebra, and let $x\in\MC(\g)$. Then one can use $x$ to twist the differential to get
\[
d^x\coloneqq d + \ad_x\ .
\]
This new operator $d^x$ is such that $(\g,d^x)$ is once again a Lie algebra.

\begin{definition}
	The \emph{component of $\g$ at $x$} is the Lie algebra obtained by truncating $(\g,d^x)$ in positive degree and taking only $\ker d^x$ in degree $0$, i.e. it is the cochain complex
	\[
	\cdots\xrightarrow{d^x}\g^{-2}\xrightarrow{d^x}\g^{-1}\xrightarrow{d^x}\ker d^x\longrightarrow0
	\]
	endowed with the Lie algebra structure inherited from $\g$.
\end{definition}

One defines the following three classes of maps:
\begin{itemize}
	\item A filtered morphism is a \emph{fibration} if it is surjective in non-negative degrees.
	\item A filtered morphism $\phi:\g\to\h$ is a \emph{weak equivalence} if $\MCbar(\phi)$ is a bijection and $\phi:\g^x\to\h^{\phi(x)}$ is a quasi-isomorphism for all $x\in\MC(\g)$.
	\item A filtered morphism is a \emph{cofibration} if it has the left lifting property with respect to all trivial cofibrations.
\end{itemize}

\begin{theorem}[{\cite[Thm. 3.1]{buijs16}}]
	These three classes of maps define a model structure on the category of complete Lie algebras and filtered morphisms. We call this model structure the BFMT model structure on complete Lie algebras.
\end{theorem}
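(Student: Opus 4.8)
The plan is to realize this model structure as the \emph{right-induced} (transferred) structure along an adjunction with simplicial sets, taking the Quillen model structure on $\ssets$ as the source. First I would make explicit the adjoint pair
\[
\mathfrak{L}\colon\ssets\longrightarrow\cdgl\ ,\qquad \MC_\bullet\colon\cdgl\longrightarrow\ssets\ ,
\]
in which $\MC_\bullet$ is the right adjoint: by \cref{thm:equivDHGgpd} one has a natural identification $\MC_\bullet(\g)\cong\hom_{\dgl}(\mc_\bullet,\g)$, so setting $\mathfrak{L}(\Delta^n)\coloneqq\mc_n$ and extending by left Kan extension produces a left adjoint to $\MC_\bullet$. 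The candidate weak equivalences should then be exactly the maps inverted by $\MC_\bullet$: recalling that $\pi_0\MC_\bullet(\g)\cong\MCbar(\g)$ and that the homotopy groups of $\MC_\bullet(\g)$ based at a vertex $x$ are the cohomology groups of the twisted complex $\g^x=(\g,d+\ad_x)$, one sees that the stated conditions ``$\MCbar(\phi)$ is a bijection and each $\phi\colon\g^x\to\h^{\phi(x)}$ is a quasi-isomorphism'' say precisely that $\MC_\bullet(\phi)$ is a weak homotopy equivalence. In particular $2$-out-of-$3$ and closure under retracts for the weak equivalences are inherited for free from $\ssets$.

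Second, I would verify the categorical hypotheses of the transfer theorem. Limits in $\cdgl$ are computed on the filtration quotients and present no difficulty, but cocompleteness is the delicate point, since a colimit must be formed by completing the naive Lie-algebra colimit with respect to an induced filtration; I would check that this completion is again a complete Lie algebra and enjoys the correct universal property for filtered morphisms. One then takes as generating (trivial) cofibrations the images $\mathfrak{L}(I)$ and $\mathfrak{L}(J)$ of the generating (trivial) cofibrations of $\ssets$, and checks that $\mathfrak{L}$ sends these small objects to small objects, so that the small object argument applies. At this stage the transferred fibrations are, by definition, the maps sent by $\MC_\bullet$ to Kan fibrations; matching them with the stated class of morphisms that are surjective in non-negative degrees is a separate verification, whose easy direction --- surjections induce Kan fibrations of $\MC_\bullet$ --- is exactly the phenomenon already exploited in \cref{thm:homotopical DR,thm:Getzler}. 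A pleasant simplification here is that every object of $\cdgl$ is fibrant, since the terminal map is always surjective in non-negative degrees.

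The hard part will be the acyclicity condition: every relative $\mathfrak{L}(J)$-cell complex must be a weak equivalence, equivalently every trivial cofibration must be inverted by $\MC_\bullet$. I would handle this through Quillen's functorial path-object argument. The simplicial enrichment $\g\otimes\Omega_\bullet$ furnishes, for each complete Lie algebra, a natural path object
\[
\g\longrightarrow\g\otimes\Omega_1\longrightarrow\g\times\g\ ,
\]
and the point is that $\MC_\bullet$ carries this to a path object in $\ssets$. Establishing the latter compatibility --- that $\MC_\bullet(\g\otimes\Omega_1)$ models the simplicial path object of the Kan complex $\MC_\bullet(\g)$, and that the right-hand map lands in Kan fibrations --- is precisely where the Dolgushev--Rogers theorem (\cref{thm:DR}) enters as the essential input, controlling how $\MC_\bullet$ interacts with the filtered quasi-isomorphisms produced along the factorization. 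Granting this, the path-object argument closes the transfer, the retract and lifting axioms follow formally, and the three classes are confirmed to constitute a model structure.
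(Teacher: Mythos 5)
First, a point of context: the paper does not prove this statement at all --- it is imported verbatim from \cite[Thm. 3.1]{buijs16}, so there is no internal proof to compare against. Judged on its own terms, your proposal has a genuine gap, and it is not a repairable detail: a right transfer along $\adjunction{\mathfrak{L}}{\ssets}{\cdgl}{\hom_{\dgl}(\mc_\bullet,-)}$, even if every hypothesis you list were verified, would produce a model structure whose fibrations are \emph{by definition} the maps sent to Kan fibrations by the right adjoint. That class is strictly larger than the class of maps surjective in non-negative degrees, so the ``separate verification'' you defer is in fact false in the hard direction. Concretely, take $\g$ abelian, concentrated in degree $2$ with zero differential (complete for the trivial filtration). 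Then $(\g\otimes\Omega_n)^1=\g^2\otimes\Omega_n^{-1}=0$, so $\MC_\bullet(\g)=\ast$ (and likewise $\hom_{\dgl}(\mc_\bullet,\g)=\ast$ since $\g^1=0$); hence $0\to\g$ is sent to an isomorphism of simplicial sets, so it is a transferred fibration, but it is not surjective in degree $2$. The transferred structure therefore has the same weak equivalences but genuinely different fibrations and cofibrations from the one asserted, and your argument proves a different theorem. This is precisely why \cite{buijs16} does \emph{not} proceed by transfer: they verify the axioms directly, building cylinders from the Lawrence--Sullivan interval $\mc_1$ and producing the factorizations by explicit tower constructions adapted to the chosen fibrations.

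Two further points deserve flagging even for the parts of the argument that do go through. The identification of the BFMT weak equivalences with the $\MC_\bullet$-inverted maps is correct but rests on Berglund's computation $\pi_n(\MC_\bullet(\g),x)\cong H^{1-n}(\g,d^x)$ for $n\ge1$, a substantial external input you pass over with ``one sees that''; and \cref{thm:equivDHGgpd} only gives a natural homotopy equivalence $\MC_\bullet(\g)\simeq\hom_{\dgl}(\mc_\bullet,\g)$, not an isomorphism, so $\MC_\bullet$ itself is not the right adjoint --- harmless for detecting weak equivalences, but not for characterizing fibrations. Finally, your acyclicity step feeds the Dolgushev--Rogers theorem (\cref{thm:DR}) into the path-object argument; this is consistent with how \cite{buijs16} actually proceeds, but note that the present paper explicitly singles out this dependence as the reason the BFMT route to \cref{thm:DR} is not self-contained.
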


The weak equivalences and fibrations of the model structure considered in the present paper are contained in the weak equivalences and fibrations of the BFMT model structure, as is proven in \cite[Sect. 6]{buijs16}.

\medskip

One also has a Quillen pair
\[
\adjunction{\mathfrak{L}}{\ssets}{\cdgl}{\hom_{\dgl}(\mc_\bullet,-)}\ ,
\]
where the functor $\mathfrak{L}$ is obtained by sending the standard $n$-simplex $\Delta^n$ to $\mc_n$ and then applying the Yoneda lemma to extend it to all simplicial sets. This is proven in \cite[Cor. 3.6]{buijs16}. In particular, both functors preserve weak equivalences. Then one proves:

\begin{proposition}[{\cite[Prop. 3.8]{buijs16}}]\label{prop:filterd qi are we in BFMT}
	The class of filtered quasi-isomorphisms is contained in the class of weak equivalences.
\end{proposition}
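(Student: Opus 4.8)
The plan is to deduce this proposition from the Dolgushev--Rogers theorem by translating the two defining conditions of a BFMT weak equivalence into a single statement about the simplicial set $\MC_\bullet$. The crux is the following reformulation, which I would prove first: a filtered morphism $\phi\colon\g\to\h$ is a weak equivalence in the BFMT sense if and only if $\MC_\bullet(\phi)$ is a weak equivalence of simplicial sets.

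To establish this reformulation I would invoke Getzler's description of the homotopy groups of the Deligne--Hinich--Getzler $\infty$-groupoid (accessible here through \cref{thm:equivDHGgpd} and the identification of $\hom_{\dgl}(\mc_\bullet,\g)$ with $\gamma_\bullet(\g)$). For every $x\in\MC(\g)$ there are natural isomorphisms
\[
\pi_0\MC_\bullet(\g)\cong\MCbar(\g),\qquad \pi_n\big(\MC_\bullet(\g),x\big)\cong H^{1-n}(\g^x)\quad(n\ge1).
\]
Since the component $\g^x$ is concentrated in degrees $\le0$, the groups $H^{1-n}(\g^x)$ for $n\ge1$ exhaust the entire cohomology of $\g^x$. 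Consequently, $\MCbar(\phi)$ being a bijection is exactly the condition that $\MC_\bullet(\phi)$ induce a bijection on $\pi_0$, and $\phi\colon\g^x\to\h^{\phi(x)}$ being a quasi-isomorphism for a given $x$ is exactly the condition that $\MC_\bullet(\phi)$ induce isomorphisms on all higher homotopy groups based at $x$. As the simplicial sets $\MC_\bullet(\g)$ and $\MC_\bullet(\h)$ are Kan complexes by \cref{thm:Getzler}, these two families of conditions together say precisely that $\MC_\bullet(\phi)$ is a weak homotopy equivalence. (The $\pi_0$-statement alone recovers the Goldman--Millson theorem \cref{thm:GM}.)

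With the reformulation in place, the proposition is immediate: if $\phi$ is a filtered quasi-isomorphism, then $\MC_\bullet(\phi)$ is a homotopy equivalence of simplicial sets by the Dolgushev--Rogers theorem \cref{thm:DR}, hence in particular a weak equivalence; by the reformulation, $\phi$ is then a weak equivalence of the BFMT model structure.

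The one genuinely external ingredient, and the step I expect to require the most care, is Getzler's homotopy-group formula: one must fix the cohomological degree conventions so that the groups $\pi_n$ with $n\ge1$ range over exactly $H^{\le0}(\g^x)$ with neither gap nor overlap, and one must check the higher homotopy groups at the source basepoints $x\in\MC(\g)$ rather than at arbitrary points of $\MC_\bullet(\h)$ --- which is exactly the data recorded by the component condition. Everything else is formal manipulation of Kan complexes and of the truncation defining the components.
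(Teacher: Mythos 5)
The paper gives no proof of its own for this statement: it is quoted verbatim from \cite[Prop.\ 3.8]{buijs16}, and the route you take --- reformulate a BFMT weak equivalence as ``$\MC_\bullet(\phi)$ is a weak equivalence of Kan complexes'' via the homotopy-group formula $\pi_0\MC_\bullet(\g)\cong\MCbar(\g)$, $\pi_n(\MC_\bullet(\g),x)\cong H^{1-n}(\g^x)$ for $n\ge1$, and then invoke \cref{thm:DR} --- is essentially the argument of the cited source. Your bookkeeping is right: since the component $\g^x$ is the non-positive truncation of $(\g,d^x)$, the groups $H^{1-n}$ for $n\ge1$ sweep out exactly its cohomology with no gap or overlap, and checking $\pi_n$ at the vertices $x\in\MC(\g)$ suffices because $\pi_0(\MC_\bullet(\phi))$ is already known to be surjective. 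The only external input is the Getzler--Berglund computation of these homotopy groups, which you correctly flag; it is not proved in this paper. One contextual caveat: because your proof (like the original one) rests on the Dolgushev--Rogers theorem, it cannot be fed back into the ``alternative proof of \cref{thm:DR}'' given in this section without circularity --- this is precisely the point the paper makes when contrasting the BFMT route with the self-contained but weaker \cref{thm:homotopical DR}.
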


Thus, we can give the following alternative proof of the Dolgushev--Rogers theorem.

\begin{proof}[Proof of \cref{thm:DR}]
	This is a direct consequence of the two results above and the fact that if $\g$ is a complete Lie algebra, then there is a canonical homotopy equivalence of simplicial sets
	\[
	\MC_\bullet(\g)\simeq\hom_{\dgl}(\mc_\bullet,\g)
	\]
	as was proven in \cite[Cor. 5.3]{rn17cosimplicial} (see also \cite{buijs17}).
\end{proof}

In fact, our argument using framings holds in this model category, too. Indeed, the natural sequence of maps
\[
\mc_0\sqcup\ldots\sqcup\mc_0\longrightarrow\mc_n\longrightarrow\mc_0\ ,
\]
which is dual to the one showing that $\bar_\iota(s^{-1}C_\bullet)$ is a simplicial frame on $\bar_\iota(s^{-1}C_0)$, exhibits $\mc_\bullet$ as a cosimplicial frame on $\mc_0$ in the category of complete Lie algebras with the BFMT model structure: the second map is a weak equivalence, since it comes from a weak equivalence in the model structure of \cref{sect:ValletteModelStr}, while the first one is a cofibration by \cite[Thm. 4.2]{buijs16}. This gives yet another alternative proof of \cref{thm:DR}. However, one should remark that the proof of \cref{prop:filterd qi are we in BFMT} in \emph{loc. cit.} relies on the Dolgushev--Rogers theorem itself, and thus their proof of \cref{thm:DR} is --- in a sense --- not self-contained. On the other hand, \cref{thm:homotopical GM} and \cref{thm:homotopical DR} are  proved using only our homotopical approach, but they do not recover the full strength of the Dolgushev--Rogers theorem.

\bibliographystyle{alpha}
\bibliography{A_model_structure_for_the_Goldman-Millson_theorem}

\end{document}